\documentclass[12pt]{amsart}
\usepackage{mathrsfs}
\usepackage{cases}
\usepackage{epic}
\usepackage{amsfonts}
\usepackage{graphicx}
\usepackage{amsmath}
\usepackage{amssymb, upgreek}
\usepackage{bm}
\usepackage{latexsym,todonotes}
\usepackage{pdflscape}
\usepackage[all,cmtip]{xy}
\usepackage{color}
\usepackage{colordvi}
\usepackage{multicol}
\usepackage[normalem]{ulem}
\usepackage{comment}
\usepackage{tikz}
\usetikzlibrary{matrix, decorations.pathreplacing, positioning}

\usepackage[linktocpage=true]{hyperref}
\hypersetup{colorlinks,linkcolor=blue,urlcolor=cyan,citecolor=blue}

\begin{document}
	\input xy
	\xyoption{all}
	
	\numberwithin{equation}{section}
	\allowdisplaybreaks
	\newcommand{\proj}{\operatorname{proj.}\nolimits}
	\newcommand{\rad}{\operatorname{rad}\nolimits}
	\newcommand{\Irr}{\operatorname{Irr}\nolimits}
	\newcommand{\soc}{\operatorname{soc}\nolimits}
	\newcommand{\ind}{\operatorname{inj.dim}\nolimits}
	\newcommand{\id}{\operatorname{id}\nolimits}
	\newcommand{\Mod}{\operatorname{Mod}\nolimits}
	\newcommand{\R}{\operatorname{R}\nolimits}
	\newcommand{\End}{\operatorname{End}\nolimits}
	\newcommand{\ob}{\operatorname{Ob}\nolimits}
	\newcommand{\Ht}{\operatorname{Ht}\nolimits}
	\newcommand{\cone}{\operatorname{cone}\nolimits}
	\newcommand{\rep}{\operatorname{rep}\nolimits}
	\newcommand{\Ext}{\operatorname{Ext}\nolimits}
	\newcommand{\Tor}{\operatorname{Tor}\nolimits}
	\newcommand{\Hom}{\operatorname{Hom}\nolimits}
	\newcommand{\Pic}{\operatorname{Pic}\nolimits}
	\newcommand{\aut}{\operatorname{Aut}\nolimits}
	\newcommand{\Fac}{\operatorname{Fac}\nolimits}
	\newcommand{\Div}{\operatorname{Div}\nolimits}
	\newcommand{\rank}{\operatorname{rank}\nolimits}
	\newcommand{\Len}{\operatorname{Length}\nolimits}
	\newcommand{\RHom}{\operatorname{RHom}\nolimits}
	\newcommand{\Iso}{\operatorname{Iso}\nolimits}
	\newcommand{\Coh}{\operatorname{coh}\nolimits}
	\newcommand{\Qcoh}{\operatorname{Qch}\nolimits}
	\newcommand{\inj}{\operatorname{inj.dim}\nolimits}
	\newcommand{\dimv}{\operatorname{\underline{\dim}}\nolimits}
	\newcommand{\res}{\operatorname{res}\nolimits}
	\def \TT{\mathbf{T}}
	\def \U{\mathbf U}
	\def \tUB{{}^{\texttt{B}}\tU}
	\def \UB{{}^{\texttt{B}}\U}
	\newcommand{\indim}{\operatorname{inj.dim}\nolimits}
	\newcommand{\pdim}{\operatorname{proj.dim}\nolimits}
	\def \dbl{\rm dbl}
	\def \sqq{\mathbf v}
	\newcommand{\Aut}{\operatorname{Aut}\nolimits}
	\newcommand{\supp}{\operatorname{supp}\nolimits}
	
	\def \bm{\mathbf m}
	\def \Id{\mathrm{Id}}
	\def \QJ {Q_{\texttt{J}}}
	\newcommand{\fgm}{{\rm mod}^{{\rm fg}}}
	\newcommand{\fgmz}{\mathrm{mod}^{{\rm fg},\Z}}
	\newcommand{\fdmz}{\mathrm{mod}^{{\rm nil},\Z}}

	\newcommand{\Cp}{\operatorname{Cp}\nolimits}
	\newcommand{\coker}{\operatorname{Coker}\nolimits}
	\newcommand{\Ab}{{\operatorname{Ab}\nolimits}}
	\newcommand{\Cone}{{\operatorname{Cone}\nolimits}}
	\newcommand{\pd}{\operatorname{proj.dim}\nolimits}
	\newcommand{\sdim}{\operatorname{sdim}\nolimits}
	\newcommand{\add}{\operatorname{add}\nolimits}
	\newcommand{\pr}{\operatorname{pr}\nolimits}
	\newcommand{\oR}{\operatorname{R}\nolimits}
	\newcommand{\oL}{\operatorname{L}\nolimits}
	
	\newcommand{\tU}{\operatorname{\widetilde{\bf U}}\nolimits}
	
	\newcommand{\Perf}{{\mathfrak Perf}}
	\newcommand{\cc}{{\mathcal C}}
	\newcommand{\ce}{{\mathcal E}}
	\newcommand{\cs}{{\mathcal S}}
	\newcommand{\cf}{{\mathcal F}}
	\newcommand{\cx}{{\mathcal X}}
	\newcommand{\cy}{{\mathcal Y}}
	\newcommand{\cl}{{\mathcal L}}
	\newcommand{\ct}{{\mathcal T}}
	\newcommand{\cu}{{\mathcal U}}
	\newcommand{\cm}{{\mathcal M}}
	\newcommand{\cv}{{\mathcal V}}
	\newcommand{\ch}{{\mathcal H}}
	\newcommand{\ca}{{\mathcal A}}
	\newcommand{\mcr}{{\mathcal R}}
	\newcommand{\cb}{{\mathcal B}}
	\newcommand{\ci}{{\mathcal I}}
	\newcommand{\cj}{{\mathcal J}}
	\newcommand{\cp}{{\mathcal P}}
	\newcommand{\cg}{{\mathcal G}}
	\newcommand{\cw}{{\mathcal W}}
	\newcommand{\co}{{\mathcal O}}
	\newcommand{\cd}{{\mathcal D}}
	\newcommand{\ck}{{\mathcal K}}
	\newcommand{\calr}{{\mathcal R}}
	
	\def \fg{{\mathfrak g}}
	\newcommand{\ol}{\overline}
	\newcommand{\ul}{\underline}
	\newcommand{\cz}{{\mathcal Z}}
	\newcommand{\st}{[1]}
	\newcommand{\ow}{\widetilde}
	\newcommand{\pic}{\operatorname{Pic}\nolimits}
	\newcommand{\Spec}{\operatorname{Spec}\nolimits}
	\newtheorem{theorem}{Theorem}[section]
	\newtheorem{acknowledgement}[theorem]{Acknowledgement}
	\newtheorem{algorithm}[theorem]{Algorithm}
	\newtheorem{axiom}[theorem]{Axiom}
	\newtheorem{case}[theorem]{Case}
	\newtheorem{claim}[theorem]{Claim}
	\newtheorem{conclusion}[theorem]{Conclusion}
	\newtheorem{condition}[theorem]{Condition}
	\newtheorem{conjecture}[theorem]{Conjecture}
	\newtheorem{construction}[theorem]{Construction}
	\newtheorem{corollary}[theorem]{Corollary}
	\newtheorem{criterion}[theorem]{Criterion}
	\newtheorem{propdef}[theorem]{Definition-Proposition}
	
	\newtheorem{definition}[theorem]{Definition}
	\newtheorem{example}[theorem]{Example}
	\newtheorem{exercise}[theorem]{Exercise}
	\newtheorem{lemma}[theorem]{Lemma}
	\newtheorem{notation}[theorem]{Notation}
	\newtheorem{problem}[theorem]{Problem}
	\newtheorem{proposition}[theorem]{Proposition}
	\newtheorem{solution}[theorem]{Solution}
	\newtheorem{summary}[theorem]{Summary}
	\newtheorem*{thm}{Theorem}
	\newcommand{\qbinom}[2]{\begin{bmatrix} #1\\sharp 2 \end{bmatrix} }
	
	\newtheorem{innercustomthm}{{\bf Theorem}}
	\newenvironment{customthm}[1]
	{\renewcommand\theinnercustomthm{#1}\innercustomthm}
	{\endinnercustomthm}
	
	\newtheorem{innercustomcor}{{\bf Corollary}}
	\newenvironment{customcor}[1]
	{\renewcommand\theinnercustomcor{#1}\innercustomcor}
	{\endinnercustomthm}
	
	\newtheorem{innercustomprop}{{\bf Proposition}}
	\newenvironment{customprop}[1]
	{\renewcommand\theinnercustomprop{#1}\innercustomprop}
	{\endinnercustomthm}
	
	\theoremstyle{remark}
	\newtheorem{remark}[theorem]{Remark}
	
	\def \bfk{\mathbf k}
	\def \bp{{\mathbf p}}
	\def \bA{{\mathbf A}}
	\def \bL{{\mathbf L}}
	\def \bF{{\mathbf F}}
	\def \bS{{\mathbf S}}
	\def \bC{{\mathbf C}}
	\def \bD{{\mathbf D}}
	\def \bfd{{\mathbf d}}
	\def \bfe{{\mathbf e}}
	\def \Arm{\mathrm{Arm}}

	\def \Ire{\I^{\rm re}}
	\def \Iim{\I^{\rm im}}
	\def \Iiso{\I^{\rm iso}}
	\def  \GL{\mathrm GL}
	\def \sgn{\rm sgn}

	\def \bs{\mathbf s}
	\def \bfx{\mathbf x}
	\def \I{\mathbb I}
	
	\def \bbK{{\mathbb K}}
	\def \Z{{\mathbb Z}}
	\def \F{{\mathbb F}}
	\def \C{{\mathbb C}}
	\def \N{{\mathbb N}}
	\def \Q{{\mathbb Q}}
	\def \G{{\mathbb G}}
	\def \X{{\mathbb X}}
	\def \P{{\mathbb P}}
	\def \K{{\mathbb K}}
	\def \E{{\mathbb E}}
	\def \A{{\mathbb A}}
	\def \BH{{\mathbb H}}
	\def \T{{\mathbb T}}
	
	\def \mod{{\mathrm{mod}}}
	
	\newcommand{\bluetext}[1]{\textcolor{blue}{#1}}
	\newcommand{\redtext}[1]{\textcolor{red}{#1}}
	\newcommand{\red}[1]{\redtext{ #1}}
	\newcommand{\blue}[1]{\bluetext{ #1}}
	\def \tMH{{\cs\cd\widetilde{\ch}}}
	
	\title[Growth rates of indecomposable summands]{Growth rates of indecomposable summands in tensor powers of  representations of quivers}

	\author[Ming Lu]{Ming Lu}
	\address{Department of Mathematics, Sichuan University, Chengdu 610064, P.R.China}
	\email{luming@scu.edu.cn}

	\author[Yayun Zhang]{Yayun Zhang}
	\address{Department of Mathematics, Sichuan University, Chengdu 610064, P.R.China}
	\email{2020141210001@stu.scu.edu.cn}

	\subjclass[2020]{Primary 16G20, 18M05,
		16E60.}
	\keywords{quiver representations; tensor product; indecomposable modules; tensor categories; Schur--Weyl duality}
	
	\begin{abstract}
		Tensor products of quiver representations have been extensively studied; typical examples include the pointwise tensor product and the tensor product induced by the coalgebra structure of path algebras. In this paper, we investigate the growth rates of the number of indecomposable direct summands in tensor powers of quiver representations with respect to these two typical tensor products.
	\end{abstract}
	
	\maketitle
	\setcounter{tocdepth}{1}
	\tableofcontents

	\section{Introduction}
	
	Tensor product decompositions of representations (or, more generally, objects of a tensor category) form a central topic in representation theory.
	In recent years, the asymptotic behavior of the number of indecomposable direct summands in tensor powers has attracted considerable attention, yet rather little is known; see \cite{B20,BS20, CEO23,CEO24, COT24,EK23}.
	In a remarkable paper, Coulembier, Ostrik, and Tubbenhauer \cite{COT24} investigated the exponential growth rates of indecomposable summands in tensor powers for representations of groups, quantum groups, and related monoidal categories, and proved that the growth rate coincides with the dimension of the underlying representation.
	
	Parallel to representations of groups and Hopf algebras, the representation theory of quivers provides a foundational framework for studying finite-dimensional algebras and their module categories.  The more general subject of bialgebra structures on quiver algebras has been studied in \cite{CR97,CR02,HLY05}. In particular, Cibils and Rosso classify in \cite{CR97} those quivers whose corresponding quiver algebras can be equipped with
	the structure of Hopf algebras.
	
	Tensor products of representations of quivers, can be dated back to \cite{D93}  by Dieterich for the $n$-subspace quivers in connection with his
	investigation of lattices over curve singularities, have been systematically studied by Herschend \cite{H08A,H08B,H09}, who introduced two fundamental monoidal structures on the category of quiver representations:
	the pointwise tensor product, defined vertex-wise and arrow-wise, and the tensor product induced by coassociative partitioning morphisms on path algebras.
	The former is elementary and concrete, while the latter arises from a coalgebra structure and thus fits into the classical framework of monoidal categories.

	The goal of the present paper is to study the exponential growth rates of the number of indecomposable summands in tensor powers of quiver representations with respect to the above two tensor products.
	We extend the philosophy and methods of \cite{COT24} to the setting of quiver representations and obtain explicit and sharp growth rate formulas for arbitrary quivers.
	
	The main results of this paper can be summarized in the following theorems.
	
	Let $Q=(Q_0,Q_1,s,t)$ be a quiver, and $M=(M_i,M_\alpha)_{i\in Q_0,\alpha\in Q_1}$ be a finite-dimensional representation of $Q$ over an algebraically closed field $\bfk$.  The pointwise tensor product of representations is defined in \cite[Section~2]{H08A}: for any representations $M=(M_i,M_\alpha),N=(N_i,N_\alpha)$ of $Q$, the \emph{pointwise tensor product} of $M,N$ is the representation $M\otimes N=(L_i,L_\alpha)$ defined as
	\[
	L_i=M_i\otimes N_i,\quad L_\alpha=M_\alpha\otimes N_\alpha,\quad\forall i\in Q_0,\alpha\in Q_1.
	\]
	
	Following \cite{COT24}, we define $b_n(M)$ as the number of indecomposable summands in $M^{\otimes n}$, for any $n\in\N$; and     
	$$\beta(M)=\lim\limits_{n\to +\infty}\sqrt[n]{b_n(M)},$$
	which is well defined by using Fekete's Subadditive Lemma~\cite{Fe23}. Then we have the following.

	\begin{customthm}{\bf A}[Theorem~\ref{thm:main0}]
		\label{thm:pointwise}
		Let \(Q\) be an arbitrary quiver and \(M\) a finite-dimensional representation of \(Q\).
		For the pointwise tensor product, we have 
		\[
		\beta(M) = \max_{i\in Q_0} \dim M_i,
		\]
		for any representation $M$ of $Q$.
	\end{customthm}
	
	Its proof is motivated by the approach in \cite[Theorem~1.9]{COT24}. We use Schur-Weyl duality between symmetric groups and products of general linear groups to reduce the argument to analyzing the exponential growth rate of the number of indecomposable summands in tensor powers of $M_i$ over the general linear group, a result already established in \cite[Proposition 2.2]{COT24}.

	Let $\Delta:\bfk Q\rightarrow \bfk Q\otimes \bfk Q$ be an algebra morphism. It is called a {\em partitioning morphism} \cite{H08A} if $\Delta$ is both an algebra morphism and a partitioning map; see Definition \ref{dfn:partition-morph}. The partitioning morphism induces a tensor product of representations, denoted by $\otimes^\Delta$. For any representation $M$, denote by $b_n^\Delta(M)$, $\beta^\Delta(M)$ the corresponding quantities.
	
	\begin{customthm}{\bf B}[Theorem~\ref{thm:main1}]
		\label{thm:partition}
		Let \(Q\) be an arbitrary quiver and \(M\) a finite-dimensional representation of \(Q\).
		For the tensor product induced by a coassociative partitioning morphism with the property \eqref{eq:diag}, we have
		\[
		\beta^\Delta(M) = \dim M,
		\]  
		for any representation $M$ of $Q$.
	\end{customthm}
	
	The proof is to compare $b_n^\Delta(M)$ and $b_n(M)$, which is given in Corollary~\ref{cor:RelationBetweenb&bDelta}:
	\begin{align*}
		b^\Delta_n(M)=b_n(M)+(\dim M)^n-\sum\limits_{k\in Q_0} (\dim M_k)^n.
	\end{align*}
	In particular, $b_n^\Delta(M)$ is independent of the partitioning morphisms $\Delta$.

	However, it is very difficult to compute $b_n(M)$ (or equivalently $b_n^\Delta(M)$) explicitly. As examples, we obtain explicit closed-form formulas for $b_n(M)$,
	in terms of dimension vectors for arbitrary quivers $Q$ of type $A$ or $D$; see Theorem~\ref{thm:bForTypeD} for type $D$, and see Theorem~\ref{thm:bForTypeA} for type $A$.
	From the computations of type $D$, we find that $b_n(M)$ also depends on the orientation of the quiver $Q$; see also \cite{H09}.

	The paper is organized as follows.
	In Section~\ref{sec:prelim}, we recall basic definitions and preliminary results on quiver representations, tensor products, and monoidal categories.
	Section~\ref{sec:tensor1} is devoted to the pointwise tensor product and the proof of Theorem~\ref{thm:pointwise}.
	In Section~\ref{sec:tensor2}, we study the tensor product via partitioning morphisms and prove Theorem~\ref{thm:partition}.
	Section~\ref{sec:example} presents explicit combinatorial formulas for Dynkin quivers of type \(A\) and \(D\). Since the proofs for type D rely on lengthy computations, we include the technical details in the appendix. 
	
	\vspace{2mm}
	\noindent{\bf Acknowledgments} 
	ML is partially supported by the National Natural Science Foundation of China (No. 12171333). 
	
	\section{Preliminaries}
	
	\label{sec:prelim}
	
	Throughout this paper, let $\mathbf{k}$ be an algebraically closed field. 
	
	\subsection{Monoidal categories}
	\begin{definition}[\cite{EGNO15}, Definitions 2.1.1, 2.1.2]
		A \emph{monoidal category} is a quintuple $(\mathcal{C},\otimes ,a,\mathbf{1},\imath)$ where $\mathcal{C}$ is a category, $\otimes :\mathcal{C}\times \mathcal{C}\to \mathcal{C}$ is a bifunctor called the \emph{tensor product} bifunctor, $a:(-\otimes -)\otimes -\to -\otimes (-\otimes -)$ is a natural isomorphism:
		\[a_{X,Y,Z}:(X\otimes Y)\otimes Z\to X\otimes (Y\otimes Z),\quad X,Y,Z\in \mathcal{C}\]
		called the \emph{associativity constraint} (or \emph{associativity isomorphism}), $\mathbf{1}\in \mathcal{C}$ is an object of $\mathcal{C}$, and $\imath:\mathbf{1}\otimes \mathbf{1}\to \mathbf{1}$ is an isomorphism, subject to the following two axioms.
		\begin{enumerate}
			\item{\rm (The pentagon axiom)} The equations \[(\id_W\otimes a_{X,Y,Z})(a_{W,X\otimes Y,Z})(a_{W,X,Y}\otimes \id_Z)=a_{W,X,Y\otimes Z} a_{W\otimes X,Y,Z}\] hold for all objects $W,X,Y,Z$ in $\mathcal{C}$;
			\item \rm{(The unit axiom)} The functors 
			\[
			L_{\mathbf{1}}:X\mapsto \mathbf{1}\otimes X \quad \text{and} \quad R_\mathbf{1}:X\mapsto X\otimes \mathbf{1}
			\]
			of left and right multiplication by $\mathbf{1}$ are autoequivalences of $\mathcal{C}$.
		\end{enumerate}
		The object $\mathbf{1}$ is called the \emph{unit object} of $\mathcal{C}$.
	\end{definition}
	
	\begin{definition}[\cite{EGNO15}, Definition 2.4.1]
		Let $(\mathcal{C},\otimes,\mathbf{1},a,\imath)$ and $(\mathcal{C}',\otimes',\mathbf{1}',a',\imath')$ be two monoidal categories. A \emph{monoidal functor} from $\mathcal{C}$ to $\mathcal{C}'$ is a pair $(F,J)$ , where $F:\mathcal{C}\to \mathcal{C}'$ is a functor, and
		\[J_{X,Y}:F(X)\otimes' F(Y)\to F(X\otimes Y)\]
		is a natural isomorphism, such that $F(\mathbf{1})$ is isomorphic to $\mathbf{1}'$ and the equation
		\begin{equation}\label{eq:TheMonoidalStructureAxiom}
			F(a_{X,Y,Z})J_{X\otimes Y,Z}(J_{X,Y}\otimes'\id_{F(Z)})=J_{X,Y\otimes Z}(\id_{F(X)}\otimes' J_{Y,Z})a'_{F(X),F(Y),F(Z)}
		\end{equation}
		holds for all $X,Y,Z\in \mathcal{C}$.
	\end{definition}
	
	\subsection{Tensor products representations of general linear groups}
	Let $G$ be a group. The diagonal map
	\[
	\mathbf{k} G \to \mathbf{k} G \otimes \mathbf{k} G, \qquad g \mapsto g \otimes g,
	\]
	makes the group algebra $\mathbf{k} G$ a coalgebra; consequently, the category of left (equivalently, right) $G$-modules becomes a monoidal category, where the tensor product is equipped with the diagonal action
	\[
	g \cdot (m \otimes n) = (g \cdot m) \otimes (g \cdot n)
	\]
	for $G$-modules $M, N$ and for all $g \in G$, $m \in M$, $n \in N$.
	
	For a representation $M$ of $G$ and a positive integer $n$, we denote by $M^{\otimes n}$ the $n$-fold tensor product of $M$ and by $nM$ the $n$-fold direct sum of $M$.
	For a finite set $S$, let $\sharp S$ denote its cardinality.
	
	\begin{definition}[{\cite[Definition 1.1]{COT24}}]\label{def:GrowthRateOfRepOfGroup}
		For a representation $M$ of $G$ and a positive integer $n$, define
		\[
		b_n^G(M) := \#\{\text{indecomposable summands in } M^{\otimes n} \text{ counted with multiplicities}\}.
		\]
		Furthermore, set
		\[
		\beta^G(M) = \lim_{n \to +\infty} \sqrt[n]{b_n^G(M)}.
		\]
	\end{definition}
	
	Let $V$ be a $d$-dimensional $\mathbf{k}$-vector space. The general linear group $\GL_d(\mathbf{k})$ acts on $V$ by matrix multiplication; with this action, $V$ is called the \emph{natural representation} of $\GL_d(\mathbf{k})$.
	
	\begin{lemma}[{\cite[Proposition 2.2]{COT24}}]\label{lem:growth-GLd}
		Let $V$ be the natural representation of $\GL_d(\bfk)$. We have $\beta^{\GL_d(\mathbf{k})}(V) = \dim V$.
	\end{lemma}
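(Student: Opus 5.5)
We bound $b_n^{\GL_d(\bfk)}(V)$ above and below by exponentials in $n$ with base $d=\dim V$, up to subexponential factors. The upper bound is immediate: every indecomposable summand of $V^{\otimes n}$ is nonzero, so
\[
b_n^{\GL_d(\bfk)}(V)\le \dim V^{\otimes n}=d^n ,
\]
and hence $\beta^{\GL_d(\bfk)}(V)\le d$.

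For the lower bound we use Schur--Weyl duality. Since $\bfk$ is algebraically closed, hence infinite, the image of $\bfk\GL_d(\bfk)$ in $\End_\bfk(V^{\otimes n})$ is the Schur algebra $S(d,n)=\End_{\bfk S_n}(V^{\otimes n})$. Thus $\GL_d(\bfk)$-summands of $V^{\otimes n}$ are the same as $S(d,n)$-summands.

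First suppose $\mathrm{char}\,\bfk=0$, or more generally $\mathrm{char}\,\bfk>n$. Then $V^{\otimes n}\cong\bigoplus_{\lambda}L_\lambda\otimes S^\lambda$, where $\lambda$ runs over partitions of $n$ with at most $d$ rows. As a $\GL_d$-module this gives
\[
b_n^{\GL_d(\bfk)}(V)=\sum_\lambda \dim S^\lambda .
\]
Since $\sum_\lambda \dim L_\lambda\dim S^\lambda=d^n$ and $\dim L_\lambda\le \binom{n+d-1}{d-1}^{d}$, which is polynomial in $n$, while the number of such $\lambda$ is also polynomial, we get $b_n\ge d^n/\mathrm{poly}(n)$. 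Taking $n$-th roots gives $\beta\ge d$.

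In positive characteristic $p$ the same structure has to be extracted from the Ringel duality picture. The decomposition $V^{\otimes n}\cong\bigoplus_\lambda T(\lambda)^{\oplus m_\lambda}$ into tilting modules holds, where $m_\lambda$ is the dimension of the simple $\bfk S_n$-module $D^{\lambda'}$ (or $0$). We then need $\max_\lambda \dim T(\lambda)$ to be subexponential relative to $d^n$, or we need a direct count. The simplest route is to bound the dimension of each indecomposable summand of $V^{\otimes n}$ by a polynomial in $n$ times $p^{O(\log n)}$.

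I expect this positive-characteristic lower bound to be the main obstacle. To get around it, I would first try the alternative argument of restricting to the diagonal torus $T\subset\GL_d$. Weight-space considerations alone do not directly give summands. Instead I would use the following fact: if $X=\bigoplus X_j$ with $X_j$ indecomposable, then each $X_j$ is a summand of some $V^{\otimes n}$. Such an $X_j$ has highest weights bounded by $n$, so $\dim X_j\le\dim V^{\otimes n}$ restricted to its block, and we need a polynomial bound on $\dim X_j$. To obtain it, I would instead construct summands explicitly. Pick $m$ with $p^m>$ any fixed threshold, and write $n=qk+r$. The module $V^{\otimes n}$ contains $(V^{\otimes k})^{\otimes q}$, and within $V^{\otimes k}$ we have the summand $\Lambda^k V$ (for $k\le d$), whose twisting yields many summands.

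Should this become messy, the fallback is the sub-multiplicativity $b_{m+n}\ge b_m b_n$ (summands of $X\otimes Y$ split further), which holds together with a lower bound $b_n\ge c\,d^n/\mathrm{poly}(n)$ for $n<p$ obtained from the characteristic-$0$ argument above. Fixing $n<p$ and using $b_{kn}\ge b_n^k$ gives $\beta\ge (d^n/\mathrm{poly}(n))^{1/n}$. This only tends to $d$ if $p$ is large, so the genuine positive-characteristic case would rely on the tilting-dimension estimate from \cite{COT24}. Since the statement is \cite[Proposition 2.2]{COT24}, we cite it for that case.
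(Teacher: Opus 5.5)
The paper does not prove this lemma; it quotes it from \cite[Proposition~2.2]{COT24}. It needs the result in every characteristic, since the proof of Theorem~\ref{thm:main0} works with $p$-regular partitions. Your characteristic-$0$ argument is a correct, self-contained proof of that case. The bound $b_n\le d^n$ gives the upper bound. From $V^{\otimes n}\cong\bigoplus_\lambda L_\lambda\otimes S^\lambda$ you get $b_n=\sum_\lambda\dim S^\lambda\ge d^n/\max_\lambda\dim L_\lambda$. Counting semistandard tableaux row by row gives $\dim L_\lambda\le\binom{n+d-1}{d-1}^{d}$, which is polynomial in $n$. So in characteristic $0$ you supply an actual proof where the paper has only a citation.

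In characteristic $p>0$ you have no argument: you end by citing the statement itself, which is exactly what the paper does. As a standalone proof, the positive-characteristic lower bound $\beta\ge d$ is therefore missing.

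You correctly identify what would suffice. The indecomposable summands of $V^{\otimes n}$ are tilting modules $T(\lambda)$ with $\lambda\vdash n$ and $\ell(\lambda)\le d$. Hence $b_n\ge d^n/\max_\lambda\dim T(\lambda)$, and a subexponential bound on $\dim T(\lambda)$ in terms of $|\lambda|$ would finish the proof. But you do not prove such a bound, and the obvious estimates are exponential in $n$. For instance, realizing $T(\lambda)$ as a summand of a tensor product of exterior powers of $V$ gives an exponential bound. Some genuinely modular input is required, and that is the nontrivial content supplied by the citation.

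Your alternative routes do not close the gap:
\begin{enumerate}
\item $\Lambda^kV$ need not be a direct summand of $V^{\otimes k}$ once $p\le k$. For $\mathrm{GL}_2$ in characteristic $2$, $\End(V\otimes V)\cong\bfk\mathfrak{S}_2\cong\bfk[x]/(x^2)$ is local, so $V\otimes V$ is indecomposable.
\item The torus paragraph yields no bound at all.
\item The inequality $b_{kn}\ge b_n^k$ (super-, not sub-multiplicativity), applied with $n<p$, only gives $\beta\ge\max_{n<p}b_n^{1/n}$. For fixed $p$ and $d\ge2$ this stays strictly below $d$, as you note yourself.
\end{enumerate}
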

	
	\subsection{Schur-Weyl duality}
	\label{subsec:Schur-Weyl}
	Let $\bfk$ be an algebraically closed field of characteristic $p=0$ or a prime. 
	Let $n$ be a positive integer and $\mathfrak{S}_n$ the symmetric group on $n$ letters. 
	Denote by $\Irr_\bfk(\mathfrak{S}_n)$ the set of irreducible (right) $\mathfrak{S}_n$-modules over $\bfk$. To describe the objects of $\Irr_\bfk(\mathfrak{S}_n)$, we need the notion of $p$-regular partitions.
	A \emph{partition} $\lambda$ of $n$, written $\lambda \vdash n$, is a finite tuple $(\lambda_1, \lambda_2, \dots, \lambda_r)$ of positive integers such that $\lambda_1 \ge \lambda_2 \ge \cdots \ge \lambda_r$ and $\sum\limits_{i=1}^r \lambda_i = n$. The number $r$ is called the \emph{length} of $\lambda$ and is denoted by $\ell(\lambda)$.
	
	\begin{definition}[{\cite[Page 241]{JK81}}]
		A partition $\lambda=(\lambda_1, \lambda_2, \dots, \lambda_d)$ of $n$ is called \emph{$p$-singular} if there exists $i$ with 
		\[\lambda_{i+1}=\lambda_{i+2}=\cdots=\lambda_{i+p}>0;\]
		otherwise $\lambda$ is called \emph{$p$-regular}. 
	\end{definition}
	There is a bijection 
	\[\Irr_\bfk(\mathfrak{S}_n) \stackrel{1:1}{\longleftrightarrow} \{\lambda\vdash n\mid \lambda \text{ is }p\text{-regular}\},\] 
	see e.g. \cite[Corollary~6.1.12]{JK81}. 
	
	For a $p$-regular partition $\lambda\vdash n$, denote by $D_\lambda$ the corresponding irreducible $\mathfrak{S}_n$-module. 
	Then we obtain a primitive  orthogonal idempotent decomposition
	
	\begin{align}
		\label{eq:idem-decom}
		1=\sum_{\lambda: p\text{-regular}}\Big(e_{\lambda,1}+\cdots+e_{\lambda,d_\lambda}\Big),
	\end{align}
	where $d_\lambda:=\dim_\bfk D_\lambda$ for each $p$-regular partition $\lambda$.
	In other words, a primitive orthogonal idempotent decomposition of the identity consists of $d_\lambda=\dim D_\lambda$ primitive idempotents corresponding to $D_\lambda$, for each $p$-regular partition $\lambda$.
	
	Let $V$ be the natural representation of the general linear group $\GL_d(\bfk)$. 
	For any positive integer $n$, the $n$-fold tensor space $V^{\otimes n}$ carries a $(\GL_d(\bfk),\mathfrak{S}_n)$-bimodule structure given by
	\begin{align*}
		g\cdot(v_1\otimes\cdots\otimes v_n)&=g(v_1)\otimes\cdots\otimes g(v_n),\\
		(v_1\otimes\cdots\otimes v_n)\cdot \sigma&=v_{\sigma(1)}\otimes \cdots \otimes v_{\sigma(n)},\label{eq:S_nModuleStructureOverTensorProduct}
	\end{align*}
	for $v_i\in V,1\leq i\leq n$,$g\in \GL_d(\bfk),\sigma\in \mathfrak{S}_n$.
	
	\begin{lemma}[Schur-Weyl duality; see e.g. {\cite[Theorem~4.1]{DP76}}]\label{lem:SchurWeylDuality-Epimorphism}
		We have two epimorphisms:
		\begin{align*}
			\bfk\mathfrak{S}_n\twoheadrightarrow \End_{\GL_d(\bfk)}(V^{\otimes n}), \qquad \bfk\GL_d(\bfk)\twoheadrightarrow\End_{\mathfrak{S}_n}(V^{\otimes n}). 
		\end{align*}  
	\end{lemma}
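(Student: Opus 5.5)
The statement is classical Schur--Weyl duality (over an algebraically closed field of arbitrary characteristic). I will prove the two surjections separately. The first gives the second via the double centralizer theorem, provided one works with a semisimple-free argument.

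\emph{First surjection.} I begin with the surjection $\bfk\mathfrak{S}_n\twoheadrightarrow \End_{\GL_d(\bfk)}(V^{\otimes n})$.

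Since $\bfk$ is infinite, $\GL_d(\bfk)$ is Zariski dense in $\mathrm{M}_d(\bfk)$. Hence the $\bfk$-span of $\{g^{\otimes n}: g\in\GL_d(\bfk)\}$ equals the span of $\{x^{\otimes n}: x\in \mathrm{M}_d(\bfk)\}$. A polarization argument, made valid in all characteristics by using the density trick rather than division by $n!$, identifies this span with the symmetric tensors $(\mathrm{M}_d^{\otimes n})^{\mathfrak{S}_n}=S(d,n)$, the Schur algebra. Consequently
\[
\End_{\GL_d}(V^{\otimes n})=\End_{S(d,n)}(V^{\otimes n}).
\]

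Next I compute this centralizer in the basis $e_{\underline i}=e_{i_1}\otimes\cdots\otimes e_{i_n}$. An endomorphism $\phi$ with matrix entries $\phi_{\underline i,\underline j}$ commutes with $S(d,n)$. The key point is to use the basis of $S(d,n)$ given by the orbit sums $\xi_{\underline i,\underline j}$ over $\mathfrak{S}_n$-orbits of pairs. From this one shows that $\phi$ is determined by its values on $e_{(1,\dots,1,2,\dots)}$-type weight vectors and lies in the image of $\bfk\mathfrak{S}_n$.

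\emph{Main obstacle.} This step is the main obstacle when $d<n$, because the map $\bfk\mathfrak{S}_n\to\End(V^{\otimes n})$ is then not injective. I would handle it as follows:
\begin{itemize}
\item Use the weight-space decomposition $V^{\otimes n}=\bigoplus_\mu V_\mu$, where $V_\mu\cong \mathrm{Ind}_{\mathfrak{S}_\mu}^{\mathfrak{S}_n}\bfk$ is a permutation module on cosets of the Young subgroup.
\item Note that $\GL_d$-equivariant maps preserve weight spaces and commute with the weight idempotents $\xi_\mu\in S(d,n)$.
\item Identify $\End_{S(d,n)}(V^{\otimes n})$ with the maps $\phi$ that are determined by a single vector $v_\mu=e_1^{\mu_1}\cdots$ in each weight space and are compatible with the raising and lowering operators $\xi_{\underline i,\underline j}$.
\item Reduce, via the standard Carter--Lusztig / de Concini--Procesi argument, to the regular weight $\mu=(1^n)$ when $d\ge n$, where $V_{(1^n)}\cong\bfk\mathfrak{S}_n$ is the regular module and the centralizer is visibly $\bfk\mathfrak{S}_n$ acting on the right.
\item For $d<n$, embed $V$ into $W=\bfk^{n}$ and use that $V^{\otimes n}=\xi\, W^{\otimes n}$ for an idempotent $\xi\in S(n,n)$. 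An equivariant endomorphism of $V^{\otimes n}$ then extends by zero on the complement, via $\xi$, to one of $W^{\otimes n}$, which already comes from $\bfk\mathfrak{S}_n$.
\end{itemize}
This truncation via idempotents is the step I expect to need the most care. Since the paper cites it as \cite[Theorem~4.1]{DP76}, in the write-up I would simply invoke that reference.

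\emph{Second surjection.} For $\bfk\GL_d(\bfk)\twoheadrightarrow\End_{\mathfrak{S}_n}(V^{\otimes n})$ the argument is direct, with no double-centralizer input needed.

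The endomorphisms of $V^{\otimes n}=(V^{\otimes n})$ commuting with place permutations are exactly $(\End V^{\otimes n})^{\mathfrak{S}_n}=(\mathrm{M}_d^{\otimes n})^{\mathfrak{S}_n}=S(d,n)$. By the density and polarization step above, $S(d,n)$ is spanned by the $g^{\otimes n}$ with $g\in\GL_d(\bfk)$. This gives the required surjectivity.
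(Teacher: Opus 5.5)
\textbf{Verdict.} Your second surjection and the case $d\ge n$ of the first are fine, but the extension-by-zero step for $d<n$ fails, and that is exactly the case the paper needs. The paper itself gives no proof of this lemma; it simply cites \cite[Theorem~4.1]{DP76}.

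\textbf{What works.} Your argument for the second surjection is correct. $\End_{\mathfrak{S}_n}(V^{\otimes n})=(\End(V)^{\otimes n})^{\mathfrak{S}_n}$ has the orbit-sum basis $O_\alpha$. For $x=\sum_k t_kx_k$ in a basis $(x_k)$ of $\End(V)$ one has $x^{\otimes n}=\sum_\alpha t^\alpha O_\alpha$. The monomials $t^\alpha$ are linearly independent as functions on the dense set $\{\det\neq 0\}$, and this is what replaces polarization. Hence the $g^{\otimes n}$, $g\in\GL_d(\bfk)$, span.

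The case $d\ge n$ of the first surjection is also fine once you state the key point. $V^{\otimes n}$ is cyclic over $S(d,n)$, generated by $v=e_1\otimes\cdots\otimes e_n$: apply $x^{\otimes n}$ with $xe_k=e_{i_k}$. So an equivariant $\phi$ is determined by $\phi(v)\in V_{(1^n)}=v\cdot\bfk\mathfrak{S}_n$.

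Your opening remark that one surjection yields the other via the double centralizer theorem is not valid outside the semisimple case. You never use it, though.

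\textbf{The gap: the case $d<n$.} This is precisely the case the paper needs ($d=\dim M_i$ fixed, $n\to\infty$). Extending $\phi\in\End_{S(d,n)}(V^{\otimes n})$ by zero on $(1-\xi)W^{\otimes n}$ does not give an $S(n,n)$-endomorphism of $W^{\otimes n}$. Already for $\phi=\mathrm{id}$ the extension is $\xi$ itself, and $\xi$ is not central in $S(n,n)$. Concretely, take $n=2$, $d=1$, $W=\bfk^2$, and $g\in\GL_2(\bfk)$ with $ge_1=e_1$, $ge_2=e_1+e_2$. Then $\xi g^{\otimes 2}(e_2\otimes e_2)=e_1\otimes e_1$, while $g^{\otimes 2}\xi(e_2\otimes e_2)=0$.

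What you do have is the restriction map $\End_{S(n,n)}(W^{\otimes n})\to\End_{S(d,n)}(V^{\otimes n})$, $\Phi\mapsto\Phi|_{\xi W^{\otimes n}}$. Its surjectivity is the entire content of the statement, and it is not formal. The standard idempotent-truncation criterion would need $W^{\otimes n}=S(n,n)\,\xi W^{\otimes n}$, which fails for $d<n$: for $d=1$ the right-hand side consists only of the symmetric tensors.

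\textbf{What is missing.} Some genuinely characteristic-free input is needed. One option is the de Concini--Procesi first fundamental theorem for multilinear $\GL(V)$-invariants of $(V^*)^{\otimes n}\otimes V^{\otimes n}$, proved via standard monomials; this is what \cite[Theorem~4.1]{DP76} provides. Another is a tilting-theoretic dimension count. Invoking the reference, as you say you would, matches the paper. The self-contained sketch for $d<n$ should not be presented as an argument.
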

	
	When $p=0$, as a $\GL_d(\bfk)$-module, the $\GL_d(\mathbf{k})$-module $V^{\otimes n}$ decomposes as
	\begin{equation}
		V^{\otimes n}\cong \bigoplus_{\lambda\vdash n,\ell(\lambda)\leq d}L(\lambda)\otimes_\bfk D_\lambda,\label{eq:SchurWeylDuality-CorrespondenceBetweenSimpleModule}
	\end{equation}
	where $L(\lambda)=\Hom_{\mathfrak{S}_n}(D_\lambda,V^{\otimes n})\cong V^{\otimes n}\cdot e_\lambda$ is the irreducible $\GL_d(\bfk)$-module. This 
	is known as the classical \emph{Schur-Weyl duality}.
	
	The decomposition \eqref{eq:SchurWeylDuality-CorrespondenceBetweenSimpleModule} does not extend to arbitrary characteristic. Nevertheless, in general characteristic the epimorphism from Lemma \ref{lem:SchurWeylDuality-Epimorphism},
	\[
	\bfk\mathfrak{S}_n\twoheadrightarrow \End_{\GL_d(\bfk)}(V^{\otimes n})
	\]
	still implies that the indecomposable direct summands of $V^{\otimes n}$ as a $\GL_d(\bfk)$-module are governed by the action of $\bfk\mathfrak{S}_n$. More precisely, the number of indecomposable summands of $V^{\otimes n}$ equals the number of primitive idempotents in \eqref{eq:idem-decom} whose images in $\End_{\GL_d(\bfk)}(V^{\otimes n})$ act nontrivially on $V^{\otimes n}$.
	Equivalently, fixing a primitive idempotent decomposition \eqref{eq:idem-decom}, we have
	\begin{equation}\label{eq:SchurWeylDuality-Decomposition}
		V^{\otimes n} \cong \bigoplus_{\lambda: p\text{-regular}} \ \bigoplus_{i=1}^{d_\lambda} V^{\otimes n} \cdot e_{\lambda,i},
	\end{equation}
	where each nonzero summand $V^{\otimes n}\cdot e_{\lambda,i}$ is indecomposable. 
	In particular, only those $p$-regular partitions $\lambda$ with $\ell(\lambda)\le d$ contribute nontrivially, so the number of indecomposable direct summands is given by
	\[
	\sum_{\substack{\lambda:p-\text{regular} \\ \ell(\lambda)\le d}} \dim D_\lambda.
	\]

	\subsection{Quiver representations}
	\label{subsec:quiver}
	Let $Q=(Q_0,Q_1,s,t)$ be an arbitrary quiver, where $Q_0$ is the set of \emph{vertices}, $Q_1$ is the set of \emph{arrows}, $s,t:Q_1\rightarrow Q_0$ are the \emph{source and target maps}, respectively.
	Denote by $\bfk Q$ the path algebra. 
	A \emph{representation} of $Q$ (over $\bfk$) is a family $M=(M_i,M_\alpha)_{i\in Q_0,\alpha\in Q_1}$, where each $M_i$ is a $\bfk$-vector space and each $M_\alpha:M_i\rightarrow M_j$ is a $\bfk$-linear map. We identify representations of $Q$ with left $\bfk Q$-modules. 
	A representation $M$ is called \emph{finite-dimensional} if $\dim M<\infty$. We denote by $\underline{\dim} M=(\dim M_i)_{i\in Q_0}$ the dimension vector of $M$. In this paper, we only consider finite-dimensional representations.
	
	Let $M,N$ be two representations of a quiver $Q$. A \emph{homomorphism} $f:M\to N$ is a family $f=(f_i)_{i\in Q_0}$ of linear maps, where $f_i$ are linear maps from $M_i$ to $N_i$, such that for each arrow $\alpha\in Q_1$, we have $f_{t(\alpha)}M_\alpha=N_\alpha f_{s(\alpha)}$. Let $\rep_\bfk (Q)$ be the category of finite-dimensional representations of $Q$, which is equivalent to the category $\mod(\bfk Q)$ of finite-dimensional left $\bfk Q$-modules. 
	We denote by $\Hom_Q(M,N)$ the set of homomorphisms from $M$ to $N$ in the category $\rep_\bfk (Q)$.
	
	If there exists an isomorphism $f:M\to N$, we say that the representations $M$ and $N$ are \emph{isomorphic} and denote the fact by $M\cong N$.
	For a representation $M$, we denote by $\id_M$ the endomorphism of $M$ defined as $(\id_M)_i=\id_{M_i}$, where $\id_{M_i}(v)=v$, for each vertex $i\in Q_0$ and vector $v\in M_i$.
	
	\section{Pointwise tensor products of quiver representations}
	
	\label{sec:tensor1}
	
	In this section, we will review the construction of pointwise tensor products of representations in \cite[Section~2]{H08A} and state the main result following \cite{COT24}.
	
	\subsection{Tensor products of quiver representations}
	\label{subsec:tensor1}
	Let $Q=(Q_0,Q_1,s,t)$ be an arbitrary quiver.
	\begin{definition}[{\cite[Proposition 2]{H08A}}]	\label{def:DiagTensorProductOfRepsOfQuiver}
		For any representations $M=(M_i,M_\alpha),N=(N_i,N_\alpha)$ of $Q$, the \emph{pointwise tensor product} of $M,N$ is the representation $M\otimes N=(L_i,L_\alpha)$ defined as
		\[
		L_i=M_i\otimes N_i,\quad L_\alpha=M_\alpha\otimes N_\alpha,\quad\forall i\in Q_0,\alpha\in Q_1.
		\]
		For homomorphisms $f=(f_\alpha):M\to M'$, $g=(g_\alpha):N\to N'$ of representations of $Q$, the \emph{tensor product} of $f$ and $g$ is the homomorphism $f\otimes g:M\otimes N\to M'\otimes N'$, defined as
		\[
		(f\otimes g)_\alpha=f_\alpha\otimes g_\alpha.
		\]
		
	\end{definition}
	
	Recall from the representation theory of algebras that every finite-dimensional representation can be uniquely decomposed into a direct sum of indecomposable representations up to isomorphism.
	
	Let us consider $\rep_\bfk (Q)$ with the tensor product $\otimes$. For any representations $L,M,N$ of $Q$, we have the following canonical vector space isomorphisms
	\begin{align*}
		& (L\otimes M)\otimes N\cong L\otimes (M\otimes N),
		\\
		&L\otimes M\cong M\otimes L,
		\\
		&L\otimes (M\oplus N)\cong (L\otimes M)\oplus (L\otimes N).
	\end{align*}
	From \cite[Theorem~2]{H08A}, we know that they are also isomorphisms as representations of $Q$, thereby endowing the category $\rep_\bfk (Q)$ with the structure of a monoidal category in which the tensor product distributes over the direct sum. The unit object $\mathbf{1}=(\mathbf{1}_i,\mathbf{1}_\alpha)$ is defined as
	\begin{align*}
		\mathbf{1}_i=\bfk,\qquad \mathbf{1}_\alpha=\id,\quad \forall i\in Q_0,\alpha\in Q_1.
	\end{align*}
	
	Let $f:M\to N$ and $g:M'\to N'$ be two isomorphisms of representations of $Q$. Denote by $f^{-1}$ and $g^{-1}$ the inverse of $f$ and $g$, then $(f\otimes g)^{-1}=f^{-1}\otimes g^{-1}$. Hence if $M\cong M'$ and $N\cong N'$, then $M\otimes N\cong M'\otimes N'$.
	
	For a representation $M$ of $Q$ and an positive integer $n$, we denote by $M^{\otimes n}$ the $n$-fold tensor product of $M$, $nM$ the $n$-fold direct sum of $M$.
	
	\subsection{Growth rates of the number of indecomposable summands in tensor products}
	
	The following definition is inspired by the notion for group representations in \cite[Definition~1.1]{COT24}, which we have reproduced in Definition~\ref{def:GrowthRateOfRepOfGroup}.

	\begin{definition}
		For a representation $M$ of quiver $Q$ and a positive integer $n$, 
		we define
		\begin{align*}
			b^Q_n(M):=\sharp\{\text{indecomposable summands in }M^{\otimes n}\text{ counted with multiplicities}\}. 
		\end{align*}
		Let further
		\begin{align}
			\beta^Q(M)=\lim\limits_{n\to +\infty}\sqrt[n]{b^Q_n(M)}.
		\end{align}
	\end{definition}
	When the quiver $Q$ is clear from the context, we simply write $b_n(M)$ and $\beta(M)$ for $b_n^Q(M)$ and $\beta^Q(M)$, respectively.
	By using the distributive law of tensors and direct sums, we have  
	$$b_n(M\otimes N)\geq b_n(M)b_n(N),\quad \forall n\in\N,$$ 
	for any representations $M$, $N$. In particular, $b_m(M)b_n(M)\geq b_{m+n}(M)$, so $\beta(M)$ is well-defined by Fekete’s Subadditive Lemma.
	
	Now we can state the first main result of this paper.
	
	\begin{theorem}
		\label{thm:main0}
		Let $Q$ be an arbitrary quiver. Then we have 
		\[
		\beta(M) = \max_{i\in Q_0} \dim M_i
		\]
		for any representation $M$ of $Q$.
	\end{theorem}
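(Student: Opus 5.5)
Set $d=\max_{i\in Q_0}\dim M_i$. We prove the two inequalities $\beta(M)\le d$ and $\beta(M)\ge d$ separately.

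\emph{Upper bound.} Every indecomposable summand $X$ of $M^{\otimes n}$ is nonzero, so some vertex $i$ has $X_i\neq 0$. Therefore $b_n(M)$ is at most the number of summands of $M^{\otimes n}$ nonzero at a given vertex, summed over all vertices. At vertex $i$ the number of summands nonzero there is at most $\dim (M^{\otimes n})_i=(\dim M_i)^n$. Hence
\[
b_n(M)\le \sum_{i\in Q_0}(\dim M_i)^n\le |Q_0|\, d^n ,
\]
and taking $n$-th roots gives $\beta(M)\le d$. This uses that $M$ is finite-dimensional, so only finitely many vertices have $M_i\neq 0$; we replace $|Q_0|$ by the size of the support of $M$.

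\emph{Lower bound via Schur--Weyl duality.} Fix a vertex $i_0$ with $\dim M_{i_0}=d$. The symmetric group $\mathfrak S_n$ acts on $M^{\otimes n}$ by permuting tensor factors simultaneously at every vertex. This action commutes with all arrow maps $M_\alpha^{\otimes n}$, so it gives an algebra map $\bfk\mathfrak S_n\to \End_Q(M^{\otimes n})$. Fix a primitive orthogonal idempotent decomposition \eqref{eq:idem-decom}. Then
\[
M^{\otimes n}\cong\bigoplus_{\lambda,j} M^{\otimes n}\cdot e_{\lambda,j}
\]
as representations of $Q$, and every nonzero term contributes at least one indecomposable summand. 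So $b_n(M)$ is at least the number of pairs $(\lambda,j)$ with $M^{\otimes n}e_{\lambda,j}\neq 0$. Since $M^{\otimes n}e_{\lambda,j}$ is nonzero whenever its component at $i_0$ is, this count is at least the number of pairs with $M_{i_0}^{\otimes n}e_{\lambda,j}\ne 0$.

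By the discussion after Lemma~\ref{lem:SchurWeylDuality-Epimorphism}, namely \eqref{eq:SchurWeylDuality-Decomposition}, that latter number is exactly $b_n^{\GL_d(\bfk)}(V)$, where $V$ is the natural representation. Hence $b_n(M)\ge b_n^{\GL_d(\bfk)}(V)$, and Lemma~\ref{lem:growth-GLd} yields $\beta(M)\ge d$.

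\emph{Main obstacle.} The key step is identifying the count of nonzero $V^{\otimes n}e_{\lambda,j}$ with $b_n^{\GL_d}(V)$. The idempotents $e_{\lambda,j}$ are primitive in $\bfk\mathfrak S_n$, and their images in $\End_{\GL_d}(V^{\otimes n})$ must be primitive or zero. This holds because $\bfk\mathfrak S_n\to\End_{\GL_d}(V^{\otimes n})$ is surjective (Lemma~\ref{lem:SchurWeylDuality-Epimorphism}), and a surjection of finite-dimensional algebras sends a primitive idempotent to a primitive idempotent or to zero, since $eAe\to \bar e\bar A\bar e$ is surjective and a quotient of a local ring is local. We only need the inequality direction. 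That follows from the above, or more crudely from the fact that the $\mathfrak S_n$-decomposition of $M^{\otimes n}$ restricts at $i_0$ to the $\GL_d$-decomposition of $V^{\otimes n}$, whose nonzero terms are indecomposable.
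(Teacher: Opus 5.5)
Your proof is correct and is essentially the paper's argument: the upper bound $b_n(M)\le\sum_{i}(\dim M_i)^n$, and the lower bound from the $\mathfrak S_n$-idempotent decomposition of $M^{\otimes n}$ combined with Schur--Weyl duality and Lemma~\ref{lem:growth-GLd}. The only difference is a small streamlining: you compare directly with a single vertex of maximal dimension using the primitive idempotents $e_{\lambda,j}$, and you prove yourself that the nonzero $V^{\otimes n}e_{\lambda,j}$ are indecomposable, whereas the paper sums over all vertices and absorbs a factor $\sharp Q_0$.
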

	The proof is presented in Subsection~\ref{sec:proof}.
	The following Lemma~is an easy observation. 
	
	\begin{lemma}
		\label{lem:upper}
		Let $Q$ be an arbitrary quiver. Then we have \[
		\beta(M)\leq \max_{i\in Q_0} \dim M_i,
		\]
		for any representation $M$ of $Q$.
	\end{lemma}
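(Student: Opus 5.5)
The upper bound comes from a crude count: the number of indecomposable summands of any representation $N$ is bounded by the dimension of a single vertex space, provided that $N$ has no summand vanishing at that vertex. This needs some care, because an indecomposable summand might be supported away from the vertex where $\dim M_i$ is maximal. So I would instead bound the number of summands by $\sum_{i\in Q_0}\dim N_i$. Every nonzero indecomposable summand contributes at least one to this total dimension, so
\[
b_n(M)\le \dim M^{\otimes n}=\sum_{i\in Q_0}\dim (M^{\otimes n})_i .
\]

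Next I would compute the vertex spaces of the tensor power. By Definition~\ref{def:DiagTensorProductOfRepsOfQuiver}, iterating the pointwise tensor product gives $(M^{\otimes n})_i=M_i^{\otimes n}$, hence $\dim (M^{\otimes n})_i=(\dim M_i)^n$. Setting $m:=\max_{i\in Q_0}\dim M_i$, this yields
\[
b_n(M)\le \sum_{i\in Q_0}(\dim M_i)^n\le \sharp Q_0\cdot m^n .
\]
Here $Q_0$ is implicitly finite: the representation is finite-dimensional, and for the bound only the finitely many vertices in its support matter, since vertices with $M_i=0$ contribute nothing.

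Finally I would take $n$-th roots and pass to the limit. We get $\sqrt[n]{b_n(M)}\le (\sharp Q_0)^{1/n}m$, and $(\sharp Q_0)^{1/n}\to 1$, so $\beta(M)\le m$. If one prefers not to assume $Q_0$ finite, replace $\sharp Q_0$ by the number of vertices in the support of $M$, which is finite because $\dim M<\infty$; the argument is otherwise unchanged.

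There is no real obstacle. The only point to check is that each indecomposable summand is nonzero, so that the number of summands is at most the total dimension.
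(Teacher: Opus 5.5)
Your argument is correct and is essentially the paper's proof: bound $b_n(M)$ by $\dim M^{\otimes n}=\sum_{i\in Q_0}(\dim M_i)^n$, then take $n$-th roots. The only difference is cosmetic: the paper finishes with Lemma~\ref{sqrtnlimOfSum}~(1), whereas you use the cruder bound $\sharp Q_0\cdot m^n$ together with $(\sharp Q_0)^{1/n}\to 1$.
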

	
	\begin{proof}
		For $M=(M_i,M_\alpha)$, set $d_i=\dim M_i$ for $i\in Q_0$.
		By definition,
		\[b_n(M)\leq \dim (M^{\otimes n})=\sum_{i\in Q_0}d_i^n.\]
		The desired result then follows from Lemma~\ref{sqrtnlimOfSum}~(1) below.
	\end{proof}
	
	Thus, the proof of Theorem~\ref{thm:main0} reduces to showing $\beta(M)\geq \max_{i\in Q_0} \dim M_i$, which is established in Subsection~\ref{sec:proof}. The argument relies on Schur–Weyl duality, recalled in Subsection~\ref{subsec:Schur-Weyl}.
	
	We require the following elementary facts from calculus.
	\begin{lemma}\label{sqrtnlimOfSum}
		Let $\{a^{(i)}_n\mid n=1,2,\cdots\}\subseteq \N,1\leq i\leq m$. Then the following statements hold.
		\begin{enumerate}
			\item Suppose that 
			\[\lim\limits_{n\to+\infty}\sqrt[n]{a^{(i)}_n}=A_i,\quad \forall 1\leq i\leq m.\]
			Then $\lim\limits_{n\to +\infty}\sqrt[n]{\sum\limits_{i=1}^m a^{(i)}_n}=\max\{A_i\mid 1\leq i\leq m\}$;\\
			\item Let $\{a_n\mid n=1,2,\cdots\}$ be a positive sequence with $\lim\limits_{n\to +\infty}a_n=1$. Then $\lim\limits_{n\to +\infty}\sqrt[n]{a_n}=1$;
			\item The following limit holds:
			\[\lim\limits_{n\to +\infty}\sqrt[n]{(\sum\limits_{i=1}^ma_n^{(i)})^n-\sum\limits_{i=1}^m(a_n^{(i)})^n}=\lim\limits_{n\to +\infty}\sum\limits_{i=1}^ma_n^{(i)}.\]
		\end{enumerate}
	\end{lemma}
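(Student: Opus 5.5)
Throughout I read $a^{(i)}_n$ as nonnegative integers. For (3) I take the $a^{(i)}=a^{(i)}_n$ to be independent of $n$, and I assume at least two of them are positive. This is the situation in Corollary~\ref{cor:RelationBetweenb&bDelta}, where the $a^{(i)}$ are the $\dim M_k$. Without that assumption the statement fails: if only one $a^{(i)}$ is positive, the left-hand side of (3) is $0$. All three parts are proved by squeezing between explicit bounds whose $n$-th roots converge.

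For (1), set $A=\max_i A_i$ and choose $i_0$ with $A_{i_0}=A$.
\begin{itemize}
\item Lower bound: $\sum_i a^{(i)}_n\ge a^{(i_0)}_n$, so $\liminf_n \sqrt[n]{\sum_i a^{(i)}_n}\ge A$.
\item Upper bound: fix $\varepsilon>0$. For $n$ large we have $a^{(i)}_n\le (A_i+\varepsilon)^n\le (A+\varepsilon)^n$ for every $i$, because there are only finitely many indices. Hence $\sum_i a^{(i)}_n\le m(A+\varepsilon)^n$, and so $\limsup_n\sqrt[n]{\sum_i a^{(i)}_n}\le \lim_n \sqrt[n]{m}\,(A+\varepsilon)=A+\varepsilon$.
\end{itemize}
Letting $\varepsilon\to0$ gives the claim.

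For (2), note that $a_n\in[\tfrac12,2]$ for all large $n$, since $a_n\to1$. Therefore $\sqrt[n]{1/2}\le\sqrt[n]{a_n}\le\sqrt[n]{2}$, and both bounds tend to $1$.

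For (3), write $S=\sum_i a^{(i)}$ and $a=\max_i a^{(i)}$.
\begin{itemize}
\item Upper bound: the expression is at most $S^n$.
\item Lower bound: $\sum_i (a^{(i)})^n\le a^{n-1}\sum_i a^{(i)}=a^{n-1}S$, so
\[
S^n-\sum_i (a^{(i)})^n\ \ge\ S^n\Bigl(1-(a/S)^{n-1}\Bigr).
\]
Since at least two of the $a^{(i)}$ are positive, $0\le a/S<1$. Thus the bracket is positive and tends to $1$, and by (2) its $n$-th root tends to $1$.
\end{itemize}
The two bounds together give the limit $S$.

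The only point needing care is the hypothesis in (3) that two of the numbers are nonzero; everything else is routine. If the authors intend the degenerate case to be included, I would instead prove the inequality $\le S$ there and handle the equality separately.
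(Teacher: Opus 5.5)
Your proof is correct and follows essentially the paper's route: squeeze arguments for (1) and (2), where your $\varepsilon$-argument fills in the step the paper calls ``straightforward'', and for (3) factoring out $S^n$ and applying (2) to the remaining bracket. Your extra hypothesis in (3), constant sequences with at least two positive terms, is what the paper's argument tacitly needs to get $\sum_i (a^{(i)}_n/S_n)^n \to 0$, so flagging it is appropriate.
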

	\begin{proof}
		These results are standard; we include a proof for completeness.
		
		(1)
		Let $M_n=\max\{a_n^{(i)}\mid 1\leq i\leq m\}$. Then $M_n\leq \sum\limits_{i=1}^n a_n^{(i)}\leq mM_n$.
		It is straightforward to verify that $\lim\limits_{n\to +\infty}\sqrt[n]{M_n}=\max\{A_i\mid 1\leq i\leq m\}$.
		
		(2)
		Since $\lim\limits_{n\to +\infty}a_n=1$, there exists $N\in \N$ such that $\frac{1}{2}\leq a_n\leq \frac{3}{2}$ for all $n\geq N$. Consequently, $\sqrt[n]{\frac{1}{2}}\leq \sqrt[n]{a_n}\leq \sqrt[n]{\frac{3}{2}}$ for $n\geq N$. 
		
		Both (1) and (2) now follow from the squeeze theorem and the basic limit $\lim\limits_{n\to +\infty}\sqrt[n]{c}=1$ for any constant $c>0$.
		
		(3)
		Write $S_n=\sum\limits_{i=1}^m a_n^{(i)}$. Then $(\sum\limits_{i=1}^ma_n^{(i)})^n-\sum\limits_{i=1}^m(a_n^{(i)})^n= (S_n)^n(1-\sum\limits_{i=1}^m(\frac{a_n^{(i)}}{S_n})^n)$. 
		Taking the $n$-th root and applying (2) to $1-\sum\limits_{i=1}^m(\frac{a_n^{(i)}}{S_n})^n$, we obtain the desired result.
	\end{proof}
	
	\subsection{Proof of Theorem~\ref{thm:main0}}
	\label{sec:proof}
	Now let us complete the proof of Theorem~\ref{thm:main0}. 
	The proof is inspired by \cite[Theorem~1.9~(a)]{COT24}.
	
	\begin{proof}[Proof of Theorem~\ref{thm:main0}]
		For a representation $M=(M_i,M_\alpha)$ of a  quiver $Q$, we denote $d_i=\dim M_i$ for $i\in Q_0$.
		Consider the tensor product $M^{\otimes n}$. 
		The symmetric group $\mathfrak{S}_n$ acts naturally on $M^{\otimes n}$ by
		\[
		\rho_\sigma:M^{\otimes n}\to M^{\otimes n},\quad m_1\otimes \cdots \otimes m_n\mapsto m_{\sigma(1)}\otimes \cdots\otimes m_{\sigma(n)},\quad \forall \sigma\in\mathfrak{S}_n. 
		\]
		One checks that $\rho_\sigma\in \End_{\bfk Q}(M^{\otimes n})$ and that the map $\rho:\bfk \mathfrak{S}_n\to \End_{\bfk Q}(M^{\otimes n})$, $\sigma\mapsto \rho_\sigma$, is an algebra anti-homomorphism; thus $M^{\otimes n}$ is a $(\bfk Q,\mathfrak{S}_n)$-bimodule.
		Hence for each $e\in \bfk\mathfrak{S}$, $M^{\otimes n}\cdot e$ is a left $\bfk Q$-submodule of $M^{\otimes n}$.
		In particular, let $1=\sum\limits_{\lambda:p\text{-regular}}e_{\lambda}$ be an orthogonal idempotent decomposition of the identity in $\bfk \mathfrak{S}_n$; see \eqref{eq:idem-decom}. 
		Then for any $p$-regular partition $\lambda$, $M^{\otimes n}. e_{\lambda}$ is a left $\bfk Q$-submodule of $M^{\otimes n}$.
		Then 
		$$M^{\otimes n}=\bigoplus_{\lambda:p\text{-regular}} M^{\otimes n}\cdot e_{\lambda}$$ 
		is a direct sum decomposition as left $\bfk Q$-modules. 
		Since $M^{\otimes n}\cdot e_\lambda$ may not be indecomposable, we have the inequality  
		\[b_n(M)\geq \sum\limits_{\lambda:M^{\otimes n}\cdot e_\lambda\neq 0}\sharp \{\text{indecomposable summands in }M^{\otimes n}\cdot e_\lambda\text{ counted with multiplicities}\}.
		\]
		Moreover, $e_\lambda= e_{\lambda,1}+\cdots+e_{\lambda,d_\lambda}$ is a sum of primitive idempotents $e_{\lambda,j}$ ($1\leq j\leq d_\lambda$) corresponding to $D_\lambda$. 
		Thus we further have
		\begin{align}
			\label{eq:growth-quiver}
			b_n(M)\geq \sum\limits_{\lambda:M^{\otimes n}\cdot e_\lambda\neq 0}\dim D_\lambda.
		\end{align}
		
		There is also a natural action $\eta$ of the direct product $\prod_{i\in Q_0}\GL_{d_i}(\bfk)$ on $M^{\otimes n}$:
		\[(\eta_g)_i:(M_i)^{\otimes n}\to (M_i)^{\otimes n}, \quad m_1\otimes \cdots\otimes m_n\mapsto g_i(m_1)\otimes \cdots \otimes g_i(m_n),\]
		where $g=(g_i\mid i\in Q_0),g_i\in \GL_{d_i}(\bfk)$.
		By \eqref{eq:SchurWeylDuality-Decomposition}, for each $M_i$ viewed as a $(\GL_{d_i}(\bfk),\mathfrak{S}_n)$-bimodule, we have 
		\begin{align}
			\label{eq:growth-Schur}
			b_n^{\GL_{d_i}(\bfk)}(M_i)=\sum\limits_{\lambda:(M_i)^{\otimes n}\cdot e_\lambda\neq 0}\dim D_\lambda;
		\end{align}
		see the proof of \cite[Theorem~1.9~(a)]{COT24}. 
		
		As vector spaces, $M^{\otimes n}\cdot e_\lambda=\oplus_{i\in Q_0}M_i^{\otimes n}\cdot e_\lambda$. Hence $M^{\otimes n}\cdot e_\lambda\neq 0$ if and only if $(M_i)^{\otimes n}\cdot e_\lambda\neq 0$ for some $i\in Q_0$.
		Consequently,
		\begin{align}
			\label{eq:lambda-Q-spaces}
			\sharp Q_0 \cdot \sharp \{\lambda\mid M^{\otimes n}\cdot e_\lambda\neq 0\}\geq \sum\limits_{i\in Q_0}\sharp \{\lambda\mid (M_i)^{\otimes n}\cdot e_\lambda\neq 0\}.
		\end{align}
		Combining \eqref{eq:growth-quiver}, \eqref{eq:growth-Schur}, and \eqref{eq:lambda-Q-spaces},
		we obtain
		\begin{align}
			\sharp  Q_0 \cdot b_n(M)\geq & \sharp  Q_0 \cdot \sum\limits_{\lambda:M^{\otimes n}\cdot e_\lambda\neq 0}\dim D_\lambda =\sharp  Q_0\cdot \sum\limits_{\lambda:\exists i\in Q_0,M_i^{\otimes n}\cdot e_\lambda\neq 0}  \dim D_\lambda \\\notag
			\geq &\sum\limits_{i\in Q_0}\sum\limits_{\lambda:(M_i)^{\otimes n}\cdot e_\lambda\neq 0}\dim D_\lambda =\sum\limits_{i\in Q_0}b_n^{\GL_{d_i}(\bfk)}(M_i).
		\end{align}
		Applying Lemma~\ref{sqrtnlimOfSum}~(1)(2), Lemma~\ref{lem:growth-GLd}, and the order-preserving property of limits, we deduce $\beta(M)\geq \max_{i\in Q_0} d_i$. Together with Lemma~\ref{lem:upper}, we complete the proof.
	\end{proof}

	\section{Tensor products induced by the coalgebra structure}
	
	\label{sec:tensor2}

	It is remarkable that the tensor product of representations on quivers $Q$ given in Subsection~\ref{subsec:tensor1} does not arise from any coalgebra structure on $\bfk Q$. However, another variant of the tensor product, which does come from a coproduct on the path algebra, is defined in \cite{H08A} using partitioning morphisms; see Subsection~\ref{subsec:partitioning}. In this section, we shall study the growth rate of tensor powers for this kind of tensor product. 
	
	\subsection{Partitioning morphisms}
	\label{subsec:partitioning}
	
	\begin{definition}[\cite{H08A}]
		Let $\Delta:\bfk Q\to \bfk Q\otimes_\bfk \bfk Q$ be an algebra morphism, and let $M,N$ be representations of $Q$. 
		The \emph{tensor product of $M,N$ respect to $\Delta$} is defined as $M\otimes^\Delta N$, with the action
		\[
		\alpha\cdot (m\otimes n)=\sum\limits_{\mu,\nu} \lambda_{\mu,\nu}(\mu\cdot m)\otimes(\nu\cdot n),
		\]
		where $\alpha\in \bfk Q$ and $\Delta(\alpha)=\sum\limits_{\mu,\nu}\lambda_{\mu,\nu}\mu\otimes \nu$.
	\end{definition}
	
	The tensor product $\otimes^\Delta$ is associative if and only if $\Delta$ is coassociative, i.e., $(1 \otimes \Delta)\Delta = (\Delta \otimes 1)\Delta$. Under this condition, the notions introduced in Section~\ref{sec:tensor1} extend naturally to $\otimes^\Delta$, and we denote the corresponding objects by $M^{\otimes^\Delta n}$, $b_n^\Delta(M)$ and $\beta^\Delta(M)$.
	
	We denote by $\bigsqcup$ the disjoint union. A \emph{partition} of a set $E$ is a family $\{E_i\mid 1\leq i\leq n\}$ of pairwise disjoint subsets such that $\bigsqcup_{1\leq i\leq n}E_i=E$. 
	For a set $E$, we denote by $E^{\times n}$ the $n$-fold Cartesian product of $E$. 
	For each $i\in Q_0$, let $e_i$ denote the trivial paths of length $0$. The unit element of the path algebra $\bfk Q$ is then $1_{\bfk Q}=\sum_{i\in Q_0}e_i$. 
	
	\begin{definition}[\cite{H08A}]\label{dfn:partition-morph}
		Let $\Delta: \bfk Q\to \bfk Q\otimes_\bfk \bfk Q$ be a linear map.
		\begin{enumerate}
			\item $\Delta$ is called a \emph{partitioning map} if there exists a partition $\{E_k\mid k\in Q_0\}$ of the set $Q_0^{\times 2}$ satisfying \[\Delta(e_k)=\sum\limits_{(i,j)\in E_k}e_i\otimes e_j,\]
			for all vertices $k\in Q_0$;\\
			\item $\Delta$ is called a \emph{partitioning morphism} if $\Delta$ is both an  algebra morphism and a partitioning map.
		\end{enumerate}
	\end{definition}
	
	For a partitioning map $\Delta$, we denote by $\{E_k\mid k\in Q_0\}$ the corresponding partition.
	We only consider the partitioning maps $\Delta$ satisfying
	\begin{align}
		\label{eq:diag}
		\Delta(\mu)=\mu\otimes\mu, 
	\end{align}
	for all paths $\mu$ of length at least $1$. 
	Then by \cite[Proposition 4]{H08A}, $\Delta$ is a partitioning morphism if and only if $(k,k)\in E_k$, for all $k\in Q_0$. In this way, the partitioning morphisms $\Delta$  with the property \eqref{eq:diag} are uniquely determined by the partitions $\{E_k\mid k\in Q_0\}$ such that $(k,k)\in E_k$ for all $k\in Q_0$. 
	
	The main result of this section is the following.
	
	\begin{theorem}\label{thm:main1}
		Let $\Delta$ be a coassociative partitioning morphism with the property \eqref{eq:diag}. Then $\beta^\Delta(M)=\dim M$ for any representation $M$ of the quiver $Q$.
	\end{theorem}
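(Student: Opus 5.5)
Upper bound: $b_n^\Delta(M)\le \dim M^{\otimes^\Delta n}=(\dim M)^n$, so $\beta^\Delta(M)\le \dim M$. The substance is the lower bound. The introduction suggests comparing with the pointwise tensor product through
\[
b_n^\Delta(M)=b_n(M)+(\dim M)^n-\sum_{k\in Q_0}(\dim M_k)^n ,
\]
so I would first establish this identity and then read off the growth rate.

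\textbf{Step 1 (structure of $M^{\otimes^\Delta n}$).} Iterate the coassociative $\Delta$ to get $\Delta^{(n)}:\bfk Q\to(\bfk Q)^{\otimes n}$. This gives a partition $\{E^{(n)}_k\mid k\in Q_0\}$ of $Q_0^{\times n}$ with $\Delta^{(n)}(e_k)=\sum_{(i_1,\dots,i_n)\in E^{(n)}_k}e_{i_1}\otimes\cdots\otimes e_{i_n}$. Coassociativity together with $(k,k)\in E_k$ shows $(k,\dots,k)\in E^{(n)}_k$. By \eqref{eq:diag}, $\Delta^{(n)}(\mu)=\mu\otimes\cdots\otimes\mu$ for every path $\mu$ of positive length. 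Hence
\[
(M^{\otimes^\Delta n})_k=\bigoplus_{(i_1,\dots,i_n)\in E^{(n)}_k}M_{i_1}\otimes\cdots\otimes M_{i_n},
\]
and an arrow $\alpha:k\to l$ acts as $M_\alpha^{\otimes n}$. This map is nonzero only on the summand indexed by $(k,\dots,k)$, because $\alpha\cdot(m_1\otimes\cdots\otimes m_n)$ requires every $m_j\in M_{s(\alpha)}$. It sends that summand into the summand $(l,\dots,l)$ of $(M^{\otimes^\Delta n})_l$.

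\textbf{Step 2 (decomposition).} Consequently $M^{\otimes^\Delta n}$ splits as a direct sum of two kinds of subrepresentations. The first is the subrepresentation supported on the diagonal tuples $(k,\dots,k)$, which is exactly the pointwise power $M^{\otimes n}$ of Section~\ref{sec:tensor1}. The second kind consists of, for each non-diagonal tuple $\mathbf i\in E^{(n)}_k$, the subrepresentation $(M_{i_1}\otimes\cdots\otimes M_{i_n})$ placed at vertex $k$ with all arrows acting by zero. Each of the latter is a direct sum of $\prod_j\dim M_{i_j}$ copies of the simple $S_k$. Summing over all non-diagonal tuples in $Q_0^{\times n}$ gives
\[
\sum_{\mathbf i\ \text{non-diag}}\prod_j \dim M_{i_j}=(\dim M)^n-\sum_k(\dim M_k)^n .
\]
By Krull--Schmidt, the identity for $b_n^\Delta(M)$ follows, and it is visibly independent of the partition.

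\textbf{Step 3 (limit).} From the identity, $b_n^\Delta(M)\ge(\dim M)^n-\sum_k(\dim M_k)^n$. By Lemma~\ref{sqrtnlimOfSum}~(3), applied with the constant sequences $a^{(k)}_n=\dim M_k$, this lower bound has $n$-th root tending to $\dim M$. Combined with the upper bound, $\beta^\Delta(M)=\dim M$.

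One caveat: Lemma~\ref{sqrtnlimOfSum}~(3) as proved needs $\sum_k(\dim M_k/\dim M)^n\to 0$. This fails when $M$ is supported at a single vertex. In that case the non-diagonal part vanishes, $M^{\otimes^\Delta n}=M^{\otimes n}$ as representations, and Theorem~\ref{thm:main0} gives $\beta^\Delta(M)=\dim M_k=\dim M$ directly. In every other case the ratio sum does tend to $0$ and Step 3 applies.

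The main obstacle is Step 1. It requires checking carefully that the iterated coproduct of an arrow kills every summand except the diagonal one, and that the diagonal subrepresentation coincides with the pointwise tensor power, including the claim that $(k,\dots,k)\in E^{(n)}_k$ via coassociativity. I would prove this by induction on $n$, writing $\Delta^{(n+1)}=(\Delta^{(n)}\otimes 1)\Delta$.
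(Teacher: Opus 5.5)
Your proof is correct, and the main difference from the paper is in the final limiting step.

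For the identity $b_n^\Delta(M)=b_n(M)+(\dim M)^n-\sum_k(\dim M_k)^n$, the paper inducts on $n$ (Proposition~\ref{prop:DecompositionOfTwoKindsOfTensorProduct-n}). It feeds Herschend's two-fold decomposition and the formula for $M(\mathbf{1}_k)\otimes^\Delta M$ into the recursion, then uses that the sets $L_{n,k}$ partition $Q_0^{\times n}$ (Lemma~\ref{TheClassOfLnkIsAPartionOfQ0n}). Your sets $E^{(n)}_k$ satisfy the same recursion as the $L_{n,k}$ and coincide with them, so the decomposition is the same. You read it off directly from $\Delta^{(n)}$ by noting that every arrow kills each non-diagonal summand, which reproves Herschend's formula rather than citing it.

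The genuine difference is the limit. The paper takes the growth of $b_n(M)$ from Theorem~\ref{thm:main0} (Schur--Weyl duality) and combines it with the growth of the correction term via Lemma~\ref{sqrtnlimOfSum}(1). You use only $b_n(M)\ge 0$ and squeeze $b_n^\Delta(M)$ between $(\dim M)^n-\sum_k(\dim M_k)^n$ and $(\dim M)^n$. This makes Theorem~\ref{thm:main0} unnecessary whenever $M_k\neq 0$ for at least two vertices $k$. It also proves that the limit exists directly, rather than through Fekete's lemma.

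Your caveat is correct and catches a real flaw. When exactly one $\dim M_k$ is nonzero, the correction term is identically $0$, so Lemma~\ref{sqrtnlimOfSum}(3) and the displayed limit \eqref{eq:TheLimitsOfWholeMinusParts} are false as stated. The paper's conclusion survives there only because the $b_n(M)$ term carries all the growth, which is exactly how you treat that case.
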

	
	The proof, which is given in Subsection~\ref{sec:proof-1}, relies on a detailed study of the properties of the partitioning morphism.
	
	\begin{lemma}\label{EquivalentDescriptionOfCAThroughPartition}
		A partitioning morphism $\Delta$ with the property \eqref{eq:diag} is coassociative if and only if
		\begin{align*}
			&\{(a,b,c)\mid \exists  a'\in Q_0 \text{ such that }(a,a')\in E_i,(b,c)\in E_{a'}\}\\
			&=\{(a,b,c)\mid \exists c'\in Q_0 \text{ such that }(c',c)\in E_i,(a,b)\in E_{c'}\},
		\end{align*}
		for all $i\in Q_0$.
	\end{lemma}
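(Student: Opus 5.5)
The plan is to compute both sides of $(\Delta\otimes 1)\Delta=(1\otimes\Delta)\Delta$ on a basis of $\bfk Q$, namely on the trivial paths $e_i$ and on the paths $\mu$ of length at least $1$, and compare them.

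For a path $\mu$ of length $\ge 1$, property \eqref{eq:diag} gives $(\Delta\otimes 1)\Delta(\mu)=\Delta(\mu)\otimes\mu=\mu\otimes\mu\otimes\mu$. The same computation gives $(1\otimes\Delta)\Delta(\mu)=\mu\otimes\mu\otimes\mu$. So coassociativity holds automatically on such paths.

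Since both composites are linear, coassociativity of $\Delta$ is therefore equivalent to coassociativity on the trivial paths $e_i$, $i\in Q_0$. Using Definition~\ref{dfn:partition-morph}(1),
\[
(1\otimes\Delta)\Delta(e_i)=\sum_{(a,a')\in E_i}e_a\otimes\Delta(e_{a'})=\sum_{(a,a')\in E_i}\ \sum_{(b,c)\in E_{a'}}e_a\otimes e_b\otimes e_c .
\]
Similarly,
\[
(\Delta\otimes 1)\Delta(e_i)=\sum_{(c',c)\in E_i}\ \sum_{(a,b)\in E_{c'}}e_a\otimes e_b\otimes e_c .
\]

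Next I would show that each of these sums has all coefficients equal to $0$ or $1$, i.e.\ that no triple is repeated. In the first sum, fix a triple $(a,b,c)$. The pair $(b,c)$ lies in exactly one block $E_{a'}$, because $\{E_k\}$ is a partition of $Q_0^{\times 2}$. Hence $a'$ is determined by $(a,b,c)$, and the triple appears at most once. The same argument applies to the second sum, where $(a,b)$ determines $c'$.

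It follows that each side equals the sum of $e_a\otimes e_b\otimes e_c$ over the triples in the corresponding set in the statement. The elements $e_a\otimes e_b\otimes e_c$ are linearly independent in $\bfk Q^{\otimes 3}$, so the two sides agree exactly when the two sets of triples coincide. Doing this for every $i\in Q_0$ gives the lemma.

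The only step needing care is this multiplicity-freeness. It is what lets us pass from an equality of sums in $\bfk Q^{\otimes 3}$ to an equality of sets, and it is exactly where the partition property of $\{E_k\}$ is used.
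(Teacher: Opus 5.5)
Your proof is correct and follows the same route as the paper: reduce coassociativity to the trivial paths $e_i$, expand both composites as sums of $e_a\otimes e_b\otimes e_c$, and compare coefficients using linear independence. The differences are minor. You reduce via the path basis and \eqref{eq:diag} on all paths, whereas the paper reduces via algebra generators. You also state explicitly the multiplicity-freeness (the partition forces each triple to determine the intermediate index), which the paper uses silently when it writes both sums as indexed directly by triples.
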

	
	\begin{proof}
		The path algebra $\bfk Q$
		is generated by all $e_i\in Q_0$, $\alpha\in Q_1$.      Note that 
		\[
		(\Delta\otimes 1)\Delta(\alpha)=\alpha\otimes \alpha\otimes \alpha=(1\otimes \Delta)\Delta(\alpha), \quad\forall\alpha\in Q_1.
		\]
		Hence, $\Delta$ is coassociative if and only if $(\Delta\otimes 1)\Delta(e_i)=(1\otimes \Delta)\Delta(e_i)$, for all vertices $i\in Q_0$, 
		in other words, if and only if 
		\[
		\sum\limits_{(a,b,c):(c',c)\in E_i,(a,b)\in E_{c'}}e_a\otimes e_b\otimes e_c=\sum\limits_{(a,b,c):(a,a')\in E_i,(b,c)\in E_{a'}}e_a\otimes e_b\otimes e_c,\]
		for all $i\in Q_0$.
		The result now follows from the fact that $\{e_a\otimes e_b\otimes e_c\mid a,b,c\in Q_0\}$ is a linearly independent set in the $\bfk$-vector space $\bfk Q\otimes_\bfk \bfk Q\otimes_\bfk \bfk Q$.
	\end{proof}
	This observation motivates the following notations.
	Let $\Delta$ be a partitioning map and let $\{E_k\mid k\in I\}$ be the corresponding partition of the Cartesian square $E^{\times 2}$, where $I$ is an index set. 
	For $k\in I$, $n\geq 2$, we define
	\begin{align*}
		L_{n,k}^\Delta&:=\{(a_1,\cdots,a_n)\in E^{\times n}\mid \exists \;a_3',a_4',\dots,a_n', \text{ such that }(a_n',a_n)\in E_k,\\
		&\qquad \quad\qquad\qquad\qquad \qquad  (a_{n-1}',a_{n-1})\in E_{a_n'},\cdots,(a_1,a_2)\in E_{a_3'}\},\\
		R_{n,k}^\Delta&:=\{(a_1,\cdots,a_n)\in E^{\times n}\mid \exists\; a_1',a_2',\dots,a_{n-2}',\text{ such that } (a_1,a_1')\in E_k,
		\\
		&
		\qquad \quad\qquad\qquad\qquad \qquad(a_2,a_2')\in E_{a_1'},\cdots,(a_{n-1},a_n)\in E_{a_{n-2}'}\},
	\end{align*}
	and we adopt the convention that $L_{1,k}^\Delta=R_{1,k}^\Delta=\emptyset$. 
	
	One readily check that $L^\Delta_{2,k}=R^\Delta_{2,k}=E_k$ for every $k\in Q_0$. 
	It is remarkable that if, in addition, $\Delta$ is  assumed in addition to be an algebra morphism, then the tuple
	$$
	\overbrace{(k,k,\cdots,k)}^n
	$$
	belongs to $L_{n,k}^{\Delta}\cap R_{n,k}^\Delta$.
	When $\Delta$ is clear from the context, we simply write $L_{n,k}$ and $R_{n,k}$ for $L_{n,k}^\Delta$ and $R_{n,k}^\Delta$, respectively.
	
	Consequently, Lemma~\ref{EquivalentDescriptionOfCAThroughPartition} can be restated via the following equivalences.
	\begin{corollary}
		Let $\Delta$ be a partitioning morphism with the property \eqref{eq:diag}. The following are equivalent:
		\begin{enumerate}
			\item $\Delta$ is coassociative;
			\item $L_{3,k}=R_{3,k}$ for all $k\in Q_0$;
			\item $L_{n,k}=R_{n,k}$ for all $k\in Q_0$ and all $n\geq 1$.
		\end{enumerate}
	\end{corollary}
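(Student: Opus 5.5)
We prove the cycle (1)$\Rightarrow$(3)$\Rightarrow$(2)$\Rightarrow$(1). Two of the three implications are formal; the real content is (1)$\Rightarrow$(3).

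For (3)$\Rightarrow$(2) there is nothing to do, since it is the case $n=3$. For (2)$\Rightarrow$(1) we unwind the definitions at $n=3$. By definition,
$$L_{3,k}=\{(a_1,a_2,a_3)\mid \exists a_3' \text{ with } (a_3',a_3)\in E_k,\ (a_1,a_2)\in E_{a_3'}\},$$
which is exactly the right-hand set of Lemma~\ref{EquivalentDescriptionOfCAThroughPartition} (with $c'=a_3'$ and $i=k$). Likewise,
$$R_{3,k}=\{(a_1,a_2,a_3)\mid \exists a_1' \text{ with } (a_1,a_1')\in E_k,\ (a_2,a_3)\in E_{a_1'}\},$$
which is the left-hand set. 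Hence (2) is literally the condition in Lemma~\ref{EquivalentDescriptionOfCAThroughPartition}, which already gives (1)$\Leftrightarrow$(2).

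For (1)$\Rightarrow$(3) we interpret both sets through iterated coproducts. The cases $n=1,2$ hold by the conventions and by $L_{2,k}=R_{2,k}=E_k$, so assume $n\ge 2$. Define
$$\Delta^{(n)}_L=(\Delta\otimes 1^{\otimes n-2})\cdots(\Delta\otimes 1)\Delta, \qquad \Delta^{(n)}_R=(1^{\otimes n-2}\otimes\Delta)\cdots(1\otimes\Delta)\Delta,$$
both maps $\bfk Q\to(\bfk Q)^{\otimes n}$. Since $E_k$ and each $E_{a'}$ are blocks of a partition, every expansion step produces basis tensors $e_{a_1}\otimes\cdots\otimes e_{a_n}$ with coefficient $0$ or $1$, and no tuple appears twice. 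By induction on $n$ one then checks
$$\Delta^{(n)}_L(e_k)=\sum_{(a_1,\dots,a_n)\in L_{n,k}}e_{a_1}\otimes\cdots\otimes e_{a_n},$$
and similarly for $R$. The induction step applies $\Delta$ to the first tensor factor $e_{a_2'}$ (relabelling the intermediate indices); the no-repetition property ensures the coefficients stay in $\{0,1\}$.

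Coassociativity implies, by the standard argument that all bracketings of an $n$-fold iterated coproduct agree (induction on $n$), that $\Delta^{(n)}_L=\Delta^{(n)}_R$. Since the tensors $e_{a_1}\otimes\cdots\otimes e_{a_n}$ are linearly independent, comparing the two expansions of $e_k$ gives $L_{n,k}=R_{n,k}$.

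The main obstacle is this identification of the index sets with supports of iterated coproducts. Getting the nesting order of the auxiliary vertices $a_i'$ right is error-prone, but it is bookkeeping rather than a conceptual difficulty.
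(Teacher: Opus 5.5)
Your proof is correct and follows essentially the same route as the paper: the Lemma identifies condition (2) with coassociativity, and you match $L_{3,k}$ and $R_{3,k}$ to the right-hand and left-hand sets correctly; the passage to general $n$ is generalized coassociativity. The paper's one-line proof only asserts that (2)$\Rightarrow$(3) ``follows from the Lemma'', so your identification of $L_{n,k}$ and $R_{n,k}$ with the supports of the left and right iterated coproducts of $e_k$ supplies the detail the paper leaves implicit, including the observation that each tuple occurs with coefficient exactly $1$, which is what prevents cancellation in positive characteristic.
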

	
	\begin{proof}
		(1) $\Rightarrow$ (2) and (3) $\Rightarrow$ (1) are trivial. (2) $\Rightarrow$ (3) follows from Lemma~\ref{EquivalentDescriptionOfCAThroughPartition}.
	\end{proof}
	
	For the remainder of this section, we assume that $\Delta$ is a coassociative partitioning morphism with the property \eqref{eq:diag}. Such a $\Delta$ always exists; see \cite{H08A}. 
	In fact, let $\Delta:\bfk Q\to \bfk Q\otimes_\bfk \bfk Q$ be the partitioning morphism corresponding to the partition
	\[\{E_k=\{(k,i)\mid i\in Q_0\}\mid k\in Q_0\}.\]
	Then this $\Delta$ satisfies all the required assumptions.

	Denote the irreducible representation corresponding to the vertex $k\in Q_0$ by $M(\mathbf{1}_k)$.
	The following Lemma~gives the relation between two kinds of tensor product defined above and in Section~\ref{sec:tensor1}.
	\begin{lemma}[{\cite[Theorem~1]{H08A}}]\label{relationBTT_1&T_2}
		For $\bfk Q$-modules $M,N$, there is an isomorphism of $\bfk Q$-modules
		\[
		M\otimes^\Delta N\cong (M\otimes N)\oplus \bigoplus_{k\in Q_0}d_k M(\mathbf{1}_k)
		\]
		where 
		\[
		d_k=\sum\limits_{(k,k)\neq(i,j)\in E_k}\dim M_i\dim N_j,\quad \forall k\in Q_0.
		\]
	\end{lemma}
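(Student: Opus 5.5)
We construct an explicit isomorphism of $\bfk Q$-modules by splitting the underlying space of $M\otimes^\Delta N$ vertex by vertex. First we identify the vertex spaces: since $\Delta(e_k)=\sum_{(i,j)\in E_k}e_i\otimes e_j$, the action of $e_k$ on $M\otimes N$ (as a vector space) is the projection onto
\[
(M\otimes^\Delta N)_k=\bigoplus_{(i,j)\in E_k} M_i\otimes N_j .
\]
Because $\{E_k\}$ partitions $Q_0^{\times 2}$, these subspaces give a decomposition of the whole space $\bigoplus_{i,j}M_i\otimes N_j$. For each $k$ we then split
\[
(M\otimes^\Delta N)_k=(M_k\otimes N_k)\oplus W_k,\qquad W_k:=\bigoplus_{(k,k)\neq(i,j)\in E_k}M_i\otimes N_j,
\]
which makes sense because $(k,k)\in E_k$. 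This is where \cite[Proposition~4]{H08A} enters, since it is equivalent to $\Delta$ being an algebra morphism under \eqref{eq:diag}. Note that $\dim W_k=d_k$.

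Next we compute the arrow action. For $\alpha\in Q_1$, property \eqref{eq:diag} gives $\alpha\cdot(m\otimes n)=\alpha m\otimes\alpha n$. For $m\in M_i$ and $n\in N_j$, this vector is zero unless $i=j=s(\alpha)$, in which case it equals $M_\alpha(m)\otimes N_\alpha(n)\in M_{t(\alpha)}\otimes N_{t(\alpha)}$. Consequently:
\begin{enumerate}
\item every arrow kills every $W_k$, since a pair $(i,j)\in E_k$ with $(i,j)\neq(k,k)$ and $i=j$ would lie in both $E_i$ and $E_k$ with $i\neq k$, which is impossible;
\item every arrow sends $W_k$ and the diagonal pieces into the diagonal pieces $M_{t(\alpha)}\otimes N_{t(\alpha)}$, and on the diagonal pieces the action is exactly $M_\alpha\otimes N_\alpha$.
\end{enumerate}

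It follows that the subrepresentation with vertex spaces $M_k\otimes N_k$ is a copy of the pointwise tensor product $M\otimes N$ of Definition~\ref{def:DiagTensorProductOfRepsOfQuiver}. Each $W_k$ is a subrepresentation concentrated at vertex $k$ with all arrows acting by zero, so it is isomorphic to $d_k M(\mathbf{1}_k)$. The vertex decomposition is therefore a direct sum decomposition of $\bfk Q$-modules, which yields the claimed isomorphism.

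The only delicate step is the observation in (1) that no diagonal pair $(i,i)$ with $i\neq k$ lies in $E_k$. This is exactly what guarantees that the complementary pieces $W_k$ are annihilated by all arrows and split off as semisimple summands.
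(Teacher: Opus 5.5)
Your proof is correct. The paper gives no argument of its own here (the lemma is quoted from \cite[Theorem~1]{H08A}), so there is no internal proof to compare with. Your argument is a complete, self-contained verification. It rests on three points: the splitting $(M\otimes^\Delta N)_k=(M_k\otimes N_k)\oplus W_k$; the fact that, by \eqref{eq:diag}, an arrow $\alpha$ acts nontrivially only on the diagonal block $M_{s(\alpha)}\otimes N_{s(\alpha)}$, where it acts by $M_\alpha\otimes N_\alpha$; and the fact that disjointness of the $E_k$, together with $(i,i)\in E_i$, keeps every diagonal block out of $W_k$. Compared with the bare citation, your proof shows exactly what is used: the partition property, \eqref{eq:diag}, and $(k,k)\in E_k$. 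Coassociativity plays no role.

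One refinement concerns your appeal to \cite[Proposition~4]{H08A}. For a vertex incident to an arrow you can get $(k,k)\in E_k$ directly. Indeed, $\alpha\otimes\alpha=\Delta(e_{t(\alpha)})(\alpha\otimes\alpha)\Delta(e_{s(\alpha)})$, and the right-hand side vanishes unless $(t(\alpha),t(\alpha))\in E_{t(\alpha)}$ and $(s(\alpha),s(\alpha))\in E_{s(\alpha)}$.

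At an isolated vertex, however, being an algebra morphism does not force this condition, and the lemma genuinely needs it. Take two isolated vertices with $E_1=\{(1,2),(2,1)\}$ and $E_2=\{(1,1),(2,2)\}$. This $\Delta$ is even coassociative, since the map sending $(a,b)$ to the $k$ with $(a,b)\in E_k$ is the group law of $\mathbb{Z}/2\mathbb{Z}$. Here $M(\mathbf{1}_1)\otimes^\Delta M(\mathbf{1}_1)\cong M(\mathbf{1}_2)$, whereas the formula predicts $M(\mathbf{1}_1)\oplus M(\mathbf{1}_2)$. So treat $(k,k)\in E_k$ as a standing hypothesis rather than a consequence of the algebra-morphism property. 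With that understanding, your argument is complete.
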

	
	\begin{example}\label{T_2WithSimpleModuleInGeneral}
		Let $M$ be a representation of $Q$. By a direct computation, we have
		\begin{align*}
			M(\mathbf{1}_k)\otimes^\Delta M=&(\dim M_k)M(\mathbf{1}_k)\oplus \bigoplus_{i\in Q_0}\big(\sum\limits_{(k,k)\neq(k,j)\in E_i}\dim M_j\big)M(\mathbf{1}_i)\\
			=&\bigoplus_{i\in Q_0}\big(\sum\limits_{(k,j)\in E_i}\dim M_j\big)M(\mathbf{1}_i)
		\end{align*} 
		and
		\begin{align*}
			M\otimes^\Delta M(\mathbf{1}_k)=&(\dim M_k)M(\mathbf{1}_k)\oplus \bigoplus_{i\in Q_0}\big(\sum\limits_{(k,k)\neq (j,k)\in E_i}\dim M_j\big)M(\mathbf{1}_i)
			\\
			=&\bigoplus_{i\in Q_0}\big(\sum\limits_{(j,k)\in E_i}\dim M_j\big)M(\mathbf{1}_i).
		\end{align*}
	\end{example}
	
	From this fact, we derive the relationship between the decompositions of these two types of tensor powers.
	\begin{proposition}\label{prop:DecompositionOfTwoKindsOfTensorProduct-n}
		Let $M$ be a representation of $Q$, $n$ be a positive integer. Then 
		\begin{align*}
			M^{\otimes^\Delta n}=M^{\otimes n}\oplus \bigoplus_{k\in Q_0}\Big(\sum_{(k,k,\cdots,k)\neq (a_1,a_2,\cdots,a_n)\in L_{n,k}} \dim M_{a_1} \dim M_{a_2}\cdots \dim M_{a_n}\Big)M(\mathbf{1}_k).
		\end{align*}
	\end{proposition}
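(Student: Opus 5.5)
We argue by induction on $n$, with $M^{\otimes^\Delta n}$ defined as $M^{\otimes^\Delta (n-1)}\otimes^\Delta M$; coassociativity makes the bracketing irrelevant up to isomorphism. For $n=1$ the extra sum is empty, since $L_{1,k}=\emptyset$. For $n=2$ the claim is exactly Lemma~\ref{relationBTT_1&T_2} with $N=M$, because $L_{2,k}=E_k$.

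For the inductive step, assume
\[
M^{\otimes^\Delta (n-1)}\cong M^{\otimes (n-1)}\oplus\bigoplus_{j\in Q_0} c_{n-1,j}\,M(\mathbf{1}_j),
\qquad
c_{n-1,j}=\sum_{(j,\dots,j)\neq(a_1,\dots,a_{n-1})\in L_{n-1,j}}\prod_{r=1}^{n-1}\dim M_{a_r}.
\]
Tensor on the right with $M$ via $\otimes^\Delta$. Since $\otimes^\Delta$ distributes over direct sums, we get two pieces.

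The first piece is $M^{\otimes (n-1)}\otimes^\Delta M$. By Lemma~\ref{relationBTT_1&T_2} applied to $M^{\otimes(n-1)}$ and $M$, it equals
\[
M^{\otimes n}\oplus\bigoplus_k\Bigl(\sum_{(k,k)\neq(j,a_n)\in E_k}\dim M_j^{n-1}\dim M_{a_n}\Bigr)M(\mathbf{1}_k),
\]
using $\dim (M^{\otimes(n-1)})_j=(\dim M_j)^{n-1}$.

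The second piece is $\bigoplus_j c_{n-1,j}\,M(\mathbf{1}_j)\otimes^\Delta M$. By Example~\ref{T_2WithSimpleModuleInGeneral}, it equals
\[
\bigoplus_k\Bigl(\sum_j c_{n-1,j}\sum_{(j,a_n)\in E_k}\dim M_{a_n}\Bigr)M(\mathbf{1}_k).
\]

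It remains to do the combinatorics. Unwinding the definition, $(a_1,\dots,a_n)\in L_{n,k}$ if and only if there is some $j=a_n'$ with $(j,a_n)\in E_k$ and $(a_1,\dots,a_{n-1})\in L_{n-1,j}$. Here we use the convention that for $n-1=1$ the tuple $(a_1)$ must equal $(j)$; this matches $L_{2,k}=E_k$. This $j$ is unique, because the $L_{n-1,j}$ for distinct $j$ are disjoint. To see this, note that $L_{m,j}$ is the set of tuples $(a_1,\dots,a_m)$ with $e_{a_1}\otimes\cdots\otimes e_{a_m}$ appearing in $\Delta^{(m-1)}(e_j)$. The elements $\Delta^{(m-1)}(e_j)$ are orthogonal idempotents summing to $1^{\otimes m}$, since $\Delta$ is an algebra map. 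Hence they have disjoint supports. So $L_{n,k}$ decomposes as a disjoint union over pairs $(j,a_n)\in E_k$ of the sets $L_{n-1,j}\times\{a_n\}$.

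Now split the tuples $(a_1,\dots,a_n)\neq(k,\dots,k)$ in $L_{n,k}$ according to whether their prefix $(a_1,\dots,a_{n-1})$ equals $(j,\dots,j)$ or not.
\begin{itemize}
\item Prefixes different from $(j,\dots,j)$ contribute $\sum_j c_{n-1,j}\sum_{(j,a_n)\in E_k}\dim M_{a_n}$, which is the second piece.
\item Prefixes equal to $(j,\dots,j)$ with $(j,a_n)\neq(k,k)$ contribute $\sum_{(j,a_n)\neq(k,k)}(\dim M_j)^{n-1}\dim M_{a_n}$, which is the first piece.
\end{itemize}
The tuple $(k,\dots,k)$ itself arises only from $j=k$, $a_n=k$, and is excluded on both sides. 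Adding the two pieces therefore gives exactly the coefficient in the statement, completing the induction.

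The main obstacle I expect is the disjointness/uniqueness of the intermediate vertex $j$. If it failed, tuples in $L_{n,k}$ would be counted more than once. The orthogonal-idempotent argument above is how I would handle it.
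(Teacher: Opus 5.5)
Your proposal is correct and follows the paper's proof essentially step for step: induction via $M^{\otimes^\Delta n}=M^{\otimes^\Delta (n-1)}\otimes^\Delta M$, then Lemma~\ref{relationBTT_1&T_2} for the piece $M^{\otimes(n-1)}\otimes^\Delta M$, then Example~\ref{T_2WithSimpleModuleInGeneral} for the simple summands, and finally regrouping into a sum over $L_{n,k}$. The paper uses the uniqueness of the intermediate vertex $j$ tacitly and proves it as Lemma~\ref{TheClassOfLnkIsAPartionOfQ0n} by an argument simpler than your idempotent one: since $\{E_k\}$ partitions $Q_0^{\times 2}$, $a'_3$ is forced by $(a_1,a_2)$ and each $a'_{i+1}$ by $(a'_i,a_i)$, so the chain is determined by the tuple, and this same fact is what makes the coefficients of $\Delta^{(m-1)}(e_j)$ equal to $0$ or $1$, which your identification of $L_{m,j}$ with a support tacitly needs in positive characteristic.
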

	\begin{proof}
		The proof is established by induction on $n$.
		The case $n=1$ is trivial, and $n=2$ is exactly Lemma~\ref{relationBTT_1&T_2} for $M=N$.
		Assume now that the result holds for $n$. Let us show that it also holds for $n+1$.
		By the inductive assumption and the associativity of $\otimes^\Delta$, we have
		\begin{align*}
			&M^{\otimes^\Delta (n+1)}=M^{\otimes^\Delta n}\otimes^\Delta M\\
			=&\Big(M^{\otimes n}\oplus \bigoplus_{k\in Q_0}\big(\sum\limits_{(k,k,\cdots,k)\neq (a_1,a_2,\cdots,a_n)\in L_{n,k}}\dim M_{a_1}\dim M_{a_2}\cdots \dim M_{a_n}\big)M(\mathbf{1}_k)\Big)\otimes^\Delta M\\
			=&\big(M^{\otimes n}\otimes^\Delta M\big)\oplus\\&
			\bigoplus_{k\in Q_0}\Big(\big(\sum\limits_{(k,k,\cdots,k)\neq (a_1,a_2,\cdots,a_n)\in L_{n,k}}\dim M_{a_1}\dim M_{a_2}\cdots \dim M_{a_n}\big)M(\mathbf{1}_k)\otimes^\Delta M\Big)\\
			=&\Big(M^{\otimes (n+1)}\oplus \bigoplus_{k\in Q_0}\big(\sum\limits_{(k,k)\neq(a,b)\in L_{2,k}}(\dim M_a)^n\dim M_b\big)M(\mathbf{1}_k)\Big)\oplus\\
			&\bigoplus_{i\in Q_0}\Big(\sum\limits_{k\in Q_0}\sum\limits_{(k,k,\cdots,k)\neq (a_1,a_2,\cdots,a_n)\in L_{n,k}}\sum\limits_{(k,j)\in L_{2,i}}\dim M_{a_1}\dim M_{a_2}\cdots \dim M_{a_n}\dim M_j\Big)M(\mathbf{1}_i)\\
			=&\Big(M^{\otimes (n+1)}\oplus \bigoplus_{k\in Q_0}\big(\sum\limits_{(k,k)\neq(a,b)\in L_{2,k}}(\dim M_a)^n\dim M_b\big)M(\mathbf{1}_k)\Big)\oplus\\
			&\bigoplus_{k\in Q_0}\Big(\sum\limits_{i\in Q_0}\sum\limits_{(i,i,\cdots,i)\neq(a_1,\cdots,a_n)\in L_{n,i}}\sum\limits_{(i,j)\in L_{2,k}}\dim M_{a_1}\dim M_{a_2}\cdots \dim M_{a_n}\dim M_j\Big)M(\mathbf{1}_k)\\
			=&M^{\otimes (n+1)}\oplus \bigoplus_{k\in Q_0}\Big(\sum\limits_{(k,\cdots,k)\neq(a_1,\cdots,a_{n+1})\in L_{n+1,k}}\dim M_{a_1}\dim M_{a_2}\cdots \dim M_{a_{n+1}}\Big)M(\mathbf{1}_k),
		\end{align*}
		where we use Lemma~\ref{relationBTT_1&T_2} and Example \ref{T_2WithSimpleModuleInGeneral}.
		Hence, the desired formula follows by induction.
	\end{proof}
	
	Now we study the family of sets $\{L_{n,k}\mid k\in Q_0\}$.
	\begin{lemma}\label{TheClassOfLnkIsAPartionOfQ0n}
		For $n\geq2$, the family $\{L_{n,k}\mid k\in Q_0\}$ constitutes a partition of $Q_0^{\times n}$.
	\end{lemma}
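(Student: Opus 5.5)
We argue by induction on $n$, using the recursive structure built into the definition of $L_{n,k}$. For $n=2$ there is nothing to prove, since $L_{2,k}=E_k$ and $\{E_k\mid k\in Q_0\}$ is a partition of $Q_0^{\times 2}$ by Definition~\ref{dfn:partition-morph}.

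The key observation is a recursion. Unwinding the definition, a tuple $(a_1,\dots,a_{n+1})$ lies in $L_{n+1,k}$ if and only if there is some $c\in Q_0$ (playing the role of $a_{n+1}'$) such that $(c,a_{n+1})\in E_k$ and $(a_1,\dots,a_n)\in L_{n,c}$. To see this, rename the witnesses $a_3',\dots,a_n'$ of the shorter chain: the conditions defining $L_{n+1,k}$ are exactly ``$(a_{n+1}',a_{n+1})\in E_k$'' together with the conditions defining $L_{n,a_{n+1}'}$. This is just the left-bracketing $(\cdots((a_1a_2)a_3)\cdots)a_{n+1}$, and it is the same recursion already used implicitly in the last step of the proof of Proposition~\ref{prop:DecompositionOfTwoKindsOfTensorProduct-n}. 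Some care is needed at the boundary $n=2$, where there are no primed witnesses and the condition is simply $(a_1,a_2)\in E_{a_3'}$; I expect this bookkeeping to be the only delicate point. Coassociativity is not needed here.

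Now assume $\{L_{n,c}\mid c\in Q_0\}$ partitions $Q_0^{\times n}$, and fix $(a_1,\dots,a_{n+1})\in Q_0^{\times (n+1)}$.

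\emph{Covering.} By the induction hypothesis there is a $c$ with $(a_1,\dots,a_n)\in L_{n,c}$. Since $\{E_k\}$ partitions $Q_0^{\times2}$, there is a $k$ with $(c,a_{n+1})\in E_k$. The recursion then gives $(a_1,\dots,a_{n+1})\in L_{n+1,k}$.

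\emph{Disjointness.} Suppose the tuple lies in both $L_{n+1,k}$ and $L_{n+1,k'}$, with witnesses $c$ and $c'$ respectively. Then $(a_1,\dots,a_n)\in L_{n,c}\cap L_{n,c'}$, so $c=c'$ by the induction hypothesis. Hence $(c,a_{n+1})\in E_k\cap E_{k'}$, which forces $k=k'$ because the $E_k$ are pairwise disjoint.

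Finally, the problem of empty blocks in a partition does not arise: $(k,\dots,k)\in L_{n,k}$, as already remarked after the definition of $L_{n,k}$ (using $(k,k)\in E_k$), so every $L_{n,k}$ is nonempty. This completes the induction.
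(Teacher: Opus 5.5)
Your proof is correct and takes the same approach as the paper. The paper works directly on the whole chain: it determines the witnesses $a_3',a_4',\dots$ one after another, uniquely for disjointness and by existence for covering. Your recursion $(a_1,\dots,a_{n+1})\in L_{n+1,k}\iff \exists\, c:\ (c,a_{n+1})\in E_k,\ (a_1,\dots,a_n)\in L_{n,c}$ packages that same step-by-step determination as an induction on $n$.
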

	\begin{proof}
		We need to prove that $L_{n,k}$ are pairwise disjoint and \begin{align*}
			\bigsqcup_kL_{n,k}=\{(a_1,\cdots,a_n)\mid a_i\in Q_0,1\leq i\leq n\}.
		\end{align*}
		
		Suppose $(a_1,\cdots,a_n)\in L_{n,k_1}\cap L_{n,k_2}\neq \emptyset$. Then there exist $a_i',b_i',3\leq i\leq n$, such that
		\begin{align*}
			&(a_1,a_2)\in E_{a_3'}\cap E_{b_3'},\\
			&(a_3',a_{3})\in E_{a_4'},\cdots, (a_{n-1}',a_{n-1})\in E_{a_n'},\quad(a_n',a_n)\in E_{k_1},\\
			&(b_3',a_{3})\in E_{b_4'},\cdots, (b_{n-1}',a_{n-1})\in E_{b_n'},\quad (b_n',a_n)\in E_{k_2}.
		\end{align*}
		Since $\{E_k\mid k\in Q_0\}$ is a partition of $Q_0^{\times 2}$, for each $(a,b)$, there exists a unique $k\in Q_0$ such that $(a,b)\in E_k$.
		Thus, we have $a_3' = b_3'$, $a_4' = b_4'$, and so on. Proceeding recursively, we obtain $k_1 = k_2$. It then follows that $L_{n,k_1} = L_{n,k_2}$ whenever $L_{n,k_1} \cap L_{n,k_2}$ is non-empty.
		
		Given an arbitrary tuple $(a_1,\cdots,a_n)\in Q_0^{\times n}$, the partition $\{E_k\mid k\in Q_0\}$ of $Q_0^{\times 2}$ guarantees the existence of a sequence 
		$a_i'$ $(3\leq i\leq n+1)$ satisfying
		\[
		(a_1,a_2)\in E_{a_3'}\quad(a_i',a_{i})\in E_{a_{i+1}'},\quad\forall 3\leq i\leq n.
		\]
		Consequently, $(a_1,\cdots,a_n)\in L_{n,a_{n+1}'}$.
	\end{proof}
	
	\begin{corollary}\label{cor:RelationBetweenb&bDelta}
		Let $\Delta$ be a partitioning morphism with the property \eqref{eq:diag}, $M$ be a representation of $Q$, $n$ a positive integer. Then
		\begin{align*}
			b^\Delta_n(M)=b_n(M)+(\dim M)^n-\sum\limits_{k\in Q_0} (\dim M_k)^n.
		\end{align*}
		In particular, $b_n^\Delta(M)$ is independent of the choice of $\Delta$.
	\end{corollary}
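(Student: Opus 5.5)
The plan is to count indecomposable summands on both sides of the decomposition in Proposition~\ref{prop:DecompositionOfTwoKindsOfTensorProduct-n}. Since each $M(\mathbf{1}_k)$ is simple, hence indecomposable, that decomposition together with the Krull--Schmidt theorem gives
\[
b_n^\Delta(M)=b_n(M)+\sum_{k\in Q_0}\ \sum_{(k,\dots,k)\neq(a_1,\dots,a_n)\in L_{n,k}}\dim M_{a_1}\cdots\dim M_{a_n}.
\]
So it suffices to evaluate the double sum on the right.

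For $n\ge 2$, recall that $(k,\dots,k)\in L_{n,k}$ for every $k$, because $\Delta$ is an algebra morphism. By Lemma~\ref{TheClassOfLnkIsAPartionOfQ0n}, the sets $\{L_{n,k}\}_{k\in Q_0}$ partition $Q_0^{\times n}$. Hence summing over all $k$ and all tuples in $L_{n,k}$ is the same as summing over all of $Q_0^{\times n}$, and the result is
\[
\sum_{(a_1,\dots,a_n)\in Q_0^{\times n}}\prod_{j=1}^n\dim M_{a_j}=\Big(\sum_{i\in Q_0}\dim M_i\Big)^n=(\dim M)^n .
\]
Removing the excluded diagonal tuples $(k,\dots,k)$, one for each $k$, subtracts exactly $\sum_k(\dim M_k)^n$. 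This gives the stated formula.

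The case $n=1$ has to be handled separately, because the convention $L_{1,k}=\emptyset$ makes the correction sum vanish. On the other side, $(\dim M)^1-\sum_k\dim M_k=0$ as well. Also $M^{\otimes^\Delta 1}=M=M^{\otimes 1}$, so the formula holds trivially. The independence from $\Delta$ is immediate, since the right-hand side involves only $b_n(M)$ and dimensions.

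The only delicate point is making sure each diagonal tuple is subtracted exactly once. This relies on the disjointness part of Lemma~\ref{TheClassOfLnkIsAPartionOfQ0n}: $(k,\dots,k)$ lies in $L_{n,k}$ and therefore in no other $L_{n,k'}$. Beyond that, the argument is bookkeeping.
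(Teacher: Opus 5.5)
Your proof is correct and follows essentially the same route as the paper: you read off $b_n^\Delta(M)$ from Proposition~\ref{prop:DecompositionOfTwoKindsOfTensorProduct-n}, then use Lemma~\ref{TheClassOfLnkIsAPartionOfQ0n} to collapse the double sum to $(\dim M)^n-\sum_k(\dim M_k)^n$. Your separate treatment of $n=1$, and your check that each diagonal tuple $(k,\dots,k)$ is removed exactly once (it lies in $L_{n,k}$ and, by disjointness, in no other $L_{n,k'}$), only spell out what the paper leaves implicit.
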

	\begin{proof}
		By definition and Proposition~\ref{prop:DecompositionOfTwoKindsOfTensorProduct-n}, we have
		\begin{align}\label{RelationBTb_n&b_nD0}
			b_n^\Delta(M)=b_n(M)+\sum\limits_{k\in Q_0}\sum\limits_{(k,\cdots,k)\neq (a_1,\cdots,a_n)\in L_{n,k}}\dim M_{a_1}\dim M_{a_2}\cdots \dim M_{a_n}.
		\end{align}
		From \eqref{RelationBTb_n&b_nD0} and Lemma~\ref{TheClassOfLnkIsAPartionOfQ0n}, we obtain
		\begin{align*}
			b^\Delta_n(M)&=b_n(M)+\sum\limits_{(a_1,a_2,\cdots,a_n)\in Q_0^{\times n}}\dim M_{a_1}\dim M_{a_2}\cdots \dim M_{a_n}-\sum\limits_{k\in Q_0}(\dim M_k)^n\\
			&=b_n(M)+(\dim M)^n-\sum\limits_{k\in Q_0} (\dim M_k)^n.
		\end{align*}
		This completes the proof.
	\end{proof}
	
	\subsection{Proof of Theorem~\ref{thm:main1}}\label{sec:proof-1}
	Now, we can complete the proof of Theorem~\ref{thm:main1}.
	
	\begin{proof}[Proof of Theorem~\ref{thm:main1}]
		By Corollary~\ref{cor:RelationBetweenb&bDelta}, we have
		\begin{equation}\label{eq:SpecificExpressionOfBetaDelta}
			\beta^\Delta(M) = \lim_{n\to +\infty} \sqrt[n]{b_n^\Delta(M)} = \lim_{n\to +\infty} \sqrt[n]{b_n(M) + (\dim M)^n - \sum_{i\in Q_0} (\dim M_i)^n}.
		\end{equation}
		Since $\dim M = \sum_{i\in Q_0} \dim M_i$, Lemma~\ref{sqrtnlimOfSum}~(3) yields
		\begin{equation}\label{eq:TheLimitsOfWholeMinusParts}
			\lim_{n\to +\infty} \sqrt[n]{(\dim M)^n - \sum_{i\in Q_0} (\dim M_i)^n} = \dim M.
		\end{equation}
		Combining \eqref{eq:TheLimitsOfWholeMinusParts}, Theorem~\ref{thm:main0} and Lemma~\ref{sqrtnlimOfSum}~(1) with \eqref{eq:SpecificExpressionOfBetaDelta}, we obtain the desired limit.
	\end{proof}
	
	\section{Explicit formulas for Dynkin quivers of type \texorpdfstring{$A$}{A} and \texorpdfstring{$D$}{D}}
	
	\label{sec:example}

	In this section, we determine the exact formula of $b_n(M)$ (and also $b_n^\Delta(M)$)  
	for any finite-dimensional representation $M$ of a quiver of type $A$ or $D$.
	
	Let $l$ be a positive integer. 
	Unless stated otherwise, $Q$ is a quiver of finite type with $l$ vertices and $\Phi^+=\Phi^+(Q)$ is its set of positive roots.
	By Gabriel's theorem, the indecomposable representations of $Q$ are in bijection with the positive roots of $Q$ (up to isomorphism). For each positive root $\bfd \in \Phi^+$, we fix an indecomposable representation with dimension vector $\bfd$ and denote it by $M^Q(\bfd)$. When $Q$ is clear from the context, we simply write $M(\bfd)$. 
	Since the tensor product is well defined on isomorphism classes, this choice does not affect the decomposition results; hence we may safely work with these representatives.
	
	Let $M=\bigoplus\limits_{\bfd\in \Phi^+}a_\bfd M(\bfd)$ be a representation of $Q$. In general, its $n$-fold tensor product decomposes as
	\begin{equation}\label{eq:generalDecomposationOfDynkinType}
		M^{\otimes n}=\bigoplus_{\bfd\in \Phi^+}\left(
		\sum_{\substack{\bfd_1,\cdots,\bfd_n\in \Phi^+ ,\\\exists k,M' ,\text{ such that }M(\bfd_1)\otimes\cdots  \otimes M(\bfd_n)=kM(\bfd)\oplus M'\\\text{and }M(\bfd)\text{ is not a direct summand of }M'}}
		ka_{\bfd_1}\cdots a_{\bfd_n}
		\right)M(\bfd).
	\end{equation}
	We denote the coefficient of $M(\bfd)$ in \eqref{eq:generalDecomposationOfDynkinType} by $a_\bfd^{(n)}$, that is,
	\begin{align*}
		\label{eq:Mtensorn}
		M^{\otimes n}=\bigoplus\limits_{\bfd\in \Phi^+}a^{(n)}_{\bfd} M(\bfd).
	\end{align*}
	
	\subsection{Thin representations}\label{sec:ThinRoots}
	The discussion in this subsection generalizes \cite[Theorem~2]{H08B}.
	
	\begin{definition}
		A vector $\bfd \in \N^l$ $(l \ge 1)$ is called \emph{thin} if $\bfd \in \{0,1\}^{\times l}$.
	\end{definition}
	
	Let $\Phi^+_{\mathrm{th}}$ denote the subset of $\Phi^+$ consisting of the positive thin roots of $Q$.
	\begin{example}\label{ex:PositiveRoots-TypeA}
		For type $A$, $\Phi^+ = \Phi^+_{\mathrm{th}}$.
	\end{example}
	For each vertex $k$, write $\mathbf{1}_k$ for the corresponding simple root.
	
	\begin{definition}
		For a vector $\bfd = (d_i) \in \N^l$, its \emph{support} $\supp(\bfd)$ is the subset of $Q_0$ given by 
		\[\{i \in Q_0 \mid \ d_i \neq 0\},\]
		and its \emph{length} is 
		\[\ell(\bfd) = \sum_{i=1}^l d_i.\]
		We say $\bfd$ is \emph{of length $m$} if $\ell(\bfd) = m$. 
	\end{definition}
	
	Note that a positive root of $Q$ has length $1$ if and only if it is a simple root, and any positive root of length less than $5$ (the minimal length of a non‑thin positive root) is necessarily thin.
	
	We identify a subset $S \subseteq Q_0$ with the full subquiver of $Q$ it spans.
	
	Such a subset is called \emph{connected} if the induced full subquiver is connected; \emph{connected components} are defined analogously.
	Thin vectors in $\N^l$ correspond bijectively to subsets of $Q_0$, and hence to full subquivers of $Q$, via
	\begin{equation}\label{eq:ThinVector<->SubsetOfVertexSet}
		\bfd \mapsto \supp(\bfd), \qquad S \mapsto \mathbf{1}_S,
	\end{equation}
	where for a subset $S \subseteq Q_0$, the thin vector $\mathbf{1}_S = (d_i)$ is given by
	\[
	d_i = \begin{cases}
		1, & \text{if } i \in S,\\
		0, & \text{otherwise}.
	\end{cases}
	\]
	(The notation is justified because $\mathbf{1}_S$ is the dimension vector of the unit object for the tensor product in $\rep_\bfk(Q_S)$, where $Q_S$ denotes the full subquiver generated by $S$.) 
	In particular, the set $\Phi^+_{\mathrm{th}}$ of positive thin roots is in bijection with the connected subsets of $Q_0$, i.e., with the connected full subquivers of $Q$. Moreover, thin roots of length $i$ correspond bijectively to connected full subquivers on $i$ vertices.
	
	Recall that there is a partial order on $\Phi^+$: for $\bfd=(d_i),\bfd'=(d'_i) \in \Phi^+$, we write $\bfd \ge \bfd'$ if $d_i \ge d_i'$ for all $i$, and $\bfd > \bfd'$ if $\bfd \ge \bfd'$ and $\bfd \neq \bfd'$. In the case where $\bfd' \in \Phi^+_{\mathrm{th}}$, we have $\bfd \ge \bfd'$ if and only if $\supp(\bfd') \subseteq \supp(\bfd)$. 
	
	\begin{lemma}\label{lem:SubquiverCorrespondToDifferentLength}
		Let $\bfd\in \Phi^+$. Then we have the following bijections:
		\begin{enumerate}
			\item $\{\bfd'\in \Phi^+\mid \ell(\bfd')=1,\ \bfd'\le \bfd\} \stackrel{1:1}{\longleftrightarrow} (\supp(\bfd))_0$;
			\item $\{\bfd'\in \Phi^+\mid \ell(\bfd')=2,\ \bfd'\le \bfd\} \stackrel{1:1}{\longleftrightarrow}  (\supp(\bfd))_1$.
		\end{enumerate}
	\end{lemma}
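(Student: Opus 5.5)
For (1), I will use the fact that a positive root of length $1$ is a simple root $\mathbf{1}_k$; such roots are in bijection with vertices $k\in Q_0$. The condition $\mathbf{1}_k\le \bfd$ means precisely $d_k\ge 1$, that is, $k\in\supp(\bfd)$. So the map $\mathbf{1}_k\mapsto k$ is the required bijection onto $(\supp(\bfd))_0$, which by convention is the vertex set of the full subquiver spanned by $\supp(\bfd)$.

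For (2), I first classify positive roots of length $2$. Such a root is thin, since its length is less than $5$, so it has the form $\mathbf{1}_{\{i,j\}}$ with $i\neq j$. A thin vector is a positive root if and only if its support is connected. Hence $\mathbf{1}_{\{i,j\}}\in\Phi^+$ exactly when $i$ and $j$ are joined by an arrow in $Q$. Since $Q$ is of finite type, hence a Dynkin quiver with no multiple arrows or loops, this arrow is unique. This gives a bijection between length-$2$ positive roots and $Q_1$, namely $\alpha\mapsto \mathbf{1}_{\{s(\alpha),t(\alpha)\}}$.

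Next, $\mathbf{1}_{\{i,j\}}\le\bfd$ holds if and only if $\{i,j\}\subseteq\supp(\bfd)$, by the remark that for thin $\bfd'$ we have $\bfd\ge\bfd'$ exactly when the supports are nested. An arrow $\alpha$ of $Q$ has both endpoints in $\supp(\bfd)$ precisely when $\alpha$ is an arrow of the full subquiver spanned by $\supp(\bfd)$. Restricting the bijection above therefore gives the bijection onto $(\supp(\bfd))_1$.

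The only point needing care is the claim that thin positive roots correspond exactly to connected supports, together with the absence of multiple edges. For the first, I will cite the bijection between $\Phi^+_{\mathrm{th}}$ and connected subsets of $Q_0$ stated just before the lemma. For the second, it is immediate that the underlying graph of a Dynkin quiver is a simple tree. With these two facts, nothing else is nontrivial.
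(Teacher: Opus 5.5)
Your proposal is correct and follows the paper's route. Like the paper, you identify positive roots of length $1$ and $2$ with connected full subquivers on one and two vertices, using the thin-root/connected-subset correspondence and the absence of loops and multiple edges in a Dynkin tree. You then restrict via the fact that $\bfd'\le\bfd$ for thin $\bfd'$ means $\supp(\bfd')\subseteq\supp(\bfd)$, spelling out the details the paper leaves to ``the preceding discussion.''
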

	
	\begin{proof}
		A connected full subquiver on a single vertex is exactly that vertex, as there are no loops. 
		Since Dynkin diagrams are trees, there is at most one edge between any two vertices of $Q$; consequently, connected full subquivers on two vertices correspond bijectively to the arrows of $Q$. 
		The claimed bijections then follow directly from the preceding discussion.
	\end{proof}
	
	For two vectors $\bfd=(d_i)$ and $\bfd'=(d'_i)\in \N^l$, we define the pointwise product $\bfd*\bfd'$  by
	\[
	(\bfd*\bfd')_i = d_i d'_i.
	\]
	
	\begin{lemma}\label{lem:DimensionVectorOfTensorProduct}
		Let $Q$ be an arbitrary quiver, and let $M,N$ be two representations of $Q$.
		Then the vector $\underline{\dim}(M\otimes N)=\underline{\dim} M*\underline{\dim}N$.
	\end{lemma}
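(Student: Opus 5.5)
This is immediate from Definition~\ref{def:DiagTensorProductOfRepsOfQuiver}, so the plan is simply to read off the dimensions vertex by vertex. By definition, the pointwise tensor product $M\otimes N=(L_i,L_\alpha)$ has underlying space $L_i=M_i\otimes_\bfk N_i$ at each vertex $i\in Q_0$. The arrow maps $L_\alpha=M_\alpha\otimes N_\alpha$ play no role, because the dimension vector depends only on the vector spaces attached to the vertices.

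The one fact needed is the standard identity from linear algebra
\[
\dim_\bfk(V\otimes_\bfk W)=\dim_\bfk V\cdot\dim_\bfk W
\]
for finite-dimensional vector spaces. To see it, choose bases $\{v_a\}$ of $V$ and $\{w_b\}$ of $W$; then the elements $\{v_a\otimes w_b\}$ form a basis of $V\otimes_\bfk W$.

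Applying this identity at each vertex gives
\[
(\underline{\dim}(M\otimes N))_i=\dim(M_i\otimes N_i)=\dim M_i\cdot\dim N_i=(\underline{\dim}M*\underline{\dim}N)_i
\]
for every $i\in Q_0$. Since the two vectors agree in every coordinate, they are equal, which proves the statement. The lemma holds for arbitrary quivers, including ones with loops or oriented cycles, since no structure of $Q$ beyond its vertex set enters the argument.

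I do not expect any step to be an obstacle. The only point worth stating explicitly is that all representations are finite-dimensional, so the dimension formula for tensor products of vector spaces applies.
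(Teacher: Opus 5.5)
Your proposal is correct and is essentially the paper's own argument. Both read off the vertex spaces $L_i=M_i\otimes N_i$ of the pointwise tensor product and apply $\dim(M_i\otimes N_i)=\dim M_i\cdot\dim N_i$ at each vertex.
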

	\begin{proof}
		By definition of the diagonal tensor product of representations, for each vertex $i\in Q_0$ we have $\dim (M\otimes N)_i=\dim (M_i\otimes N_i)=\dim M_i \dim N_i$.
	\end{proof}
	\begin{corollary}\label{cor:TensorProductOfIndRepOfA}
		Let $\bfd,\bfd'$ be two thin roots of $Q$. Then the tensor product $M(\bfd)\otimes M(\bfd')=M(\bfd*\bfd')$.
	\end{corollary}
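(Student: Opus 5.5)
The plan is to work directly with explicit thin representatives and compute the pointwise tensor product vertex by vertex and arrow by arrow. Since $Q$ is of finite type, its underlying graph is a Dynkin diagram and hence a tree. For a thin root $\bfd$ with connected support $S=\supp(\bfd)$, I would first fix a concrete model of $M(\bfd)$. Put $M(\bfd)_i=\bfk$ for $i\in S$ and $M(\bfd)_i=0$ otherwise. For an arrow $\alpha$ with both ends in $S$, put $M(\bfd)_\alpha=\id_\bfk$; all other arrow maps are forced to be zero.

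The first step is to justify that this model is indeed the indecomposable with dimension vector $\bfd$. A thin representation supported on $S$ whose maps on the arrows of $Q_S$ are all nonzero scalars is isomorphic to the model above. Because $Q_S$ is a tree, one can rescale the basis vector at each vertex inductively, starting from a root of the tree and moving outward, so that every arrow scalar becomes $1$. Such a representation is indecomposable because $S$ is connected. Its endomorphisms are tuples of scalars $(c_i)_{i\in S}$ with $c_{s(\alpha)}=c_{t(\alpha)}$ along every arrow of $Q_S$, so by connectedness $\End\cong\bfk$. Conversely, any indecomposable thin representation must have nonzero maps on all arrows inside its support, since otherwise it splits along the disconnection of the support graph. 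Hence, by uniqueness in Gabriel's theorem, the model is $M(\bfd)$.

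The second step is the computation itself. By Lemma~\ref{lem:DimensionVectorOfTensorProduct}, $M(\bfd)\otimes M(\bfd')$ has dimension vector $\bfd*\bfd'=\mathbf{1}_{S\cap S'}$. At a vertex $i\in S\cap S'$ the space is $\bfk\otimes\bfk=\bfk$. For an arrow $\alpha$ with both ends in $S\cap S'$, the map is $\id\otimes\id=\id$. Every other arrow has source or target space zero, so its map is zero. Thus the tensor product is exactly the model representation attached to the subset $S\cap S'$.

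The remaining point, which I expect to be the only real obstacle, is to show that $S\cap S'$ is connected, so that $\bfd*\bfd'$ is again a positive thin root, or $S\cap S'$ is empty, in which case both sides are $0$ under the natural convention. I would argue using the tree property. Take $u,v\in S\cap S'$. The unique path in the underlying graph of $Q$ joining $u$ and $v$ lies in $S$, because $S$ is connected and paths in a tree are unique, and for the same reason it lies in $S'$. Hence the path lies in $S\cap S'$, so $S\cap S'$ is connected. Applying the first step to $S\cap S'$ then gives $M(\bfd)\otimes M(\bfd')\cong M(\bfd*\bfd')$.
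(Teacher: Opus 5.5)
Your proof is correct and takes the same route as the paper, which just asserts that $M(\bfd)\otimes M(\bfd')$ is indecomposable with dimension vector $\bfd*\bfd'$ (via Lemma~\ref{lem:DimensionVectorOfTensorProduct}) and concludes by Gabriel's uniqueness. Your explicit identity-map model and the tree argument that $\supp(\bfd)\cap\supp(\bfd')$ is connected, together with your convention for the empty-intersection case, supply the justification that the paper's ``by definition'' leaves implicit.
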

	\begin{proof}
		By definition and Lemma~\ref{lem:DimensionVectorOfTensorProduct}, the representation $M(\bfd)\otimes M(\bfd')$ is indecomposable and has dimension vector $\bfd * \bfd'$, as desired.
	\end{proof}

	For a positive root $\bfd\in \Phi^+$, consider the subset of summation indices in \eqref{eq:generalDecomposationOfDynkinType}
	\[
	S(\bfd)=\Bigl\{ \bfd' \in \Phi^+ \;\Big|\;
	\begin{aligned}[t]
		&\exists \bfd''\in \Phi^+,\ \exists k>0\ \text{such that } M(\bfd') \otimes M(\bfd'') = kM(\bfd)\oplus M',\\
		&\text{where }M(\bfd) \text{ is not a direct summand of } M'\Bigr\}.
	\end{aligned}
	\]
	
	For a nonzero vector $\bfd=(d_i)\in\N^l$, denote \[m(\bfd)=\min\{d_i\mid 1\leq i\leq l, d_i>0\}.\]
	The main tools we use to prove the formulas \eqref{eq:bForD} and \eqref{eq:bForA} are the following.
	
	\begin{proposition}\label{prop:SumOfCoefficientsOfGeneratorsOfThinRepOfTypeAandD}
		Let $Q$ be a quiver of type $A$ or $D$,  
		$M$ be a representation of $Q$ and 
		\[
		M=\bigoplus\limits_{\bfd\in \Phi^+}a_\bfd M(\bfd),\quad M^{\otimes n}=\bigoplus\limits_{\bfd\in \Phi^+}a_\bfd^{(n)}M(\bfd).
		\]
		Then for each $\bfd\in \Phi^+_{\mathrm{th}}$, 
		\begin{align}
			\Bigl(\sum_{\bfd'\in S(\bfd)} m(\bfd'*\bfd)\, a_{\bfd'}\Bigr)^{\!n} = \sum_{\bfd'\in S(\bfd)} m(\bfd'*\bfd)\, a_{\bfd'}^{(n)}.
		\end{align}
	\end{proposition}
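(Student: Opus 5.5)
The identity says that the linear functional
\[
\varphi_{\bfd}(N):=\sum_{\bfd'\in S(\bfd)} m(\bfd'*\bfd)\,[N:M(\bfd')],
\]
where $[N:M(\bfd')]$ is the multiplicity of $M(\bfd')$ in $N$, satisfies $\varphi_{\bfd}(M^{\otimes n})=\varphi_{\bfd}(M)^n$. The plan is to interpret $\varphi_{\bfd}$ as the multiplicity of a monoidal unit after restricting to a subquiver, and then show that this multiplicity is multiplicative. Fix a thin root $\bfd$ and put $S=\supp(\bfd)$, a connected full subquiver $Q_S$ of $Q$, again of type $A$ or $D$. Let $\res_S:\rep_\bfk(Q)\to\rep_\bfk(Q_S)$ be restriction. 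It is clearly strict monoidal for the pointwise tensor product, additive, and sends $M(\bfd)$ to the unit $\mathbf 1$ of $\rep_\bfk(Q_S)$. For a representation $X$ of $Q_S$ let $u(X)$ be the multiplicity of $\mathbf 1$ as a direct summand of $X$.

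\emph{Step 1: identify $S(\bfd)$.} I expect $S(\bfd)=\{\bfd'\in\Phi^+\mid \bfd'\ge\bfd\}$.
\begin{itemize}
\item If $M(\bfd)$ is a summand of $M(\bfd')\otimes M(\bfd'')$, then Lemma~\ref{lem:DimensionVectorOfTensorProduct} forces $\supp(\bfd)\subseteq\supp(\bfd')$, hence $\bfd'\ge\bfd$.
\item Conversely, for $\bfd'\ge\bfd$ take $\bfd''=\bfd$. Then $M(\bfd')\otimes M(\bfd)$ is supported on $S$ and equals $\res_S M(\bfd')$ extended by zero. It contains $M(\bfd)$ as soon as $u(\res_S M(\bfd'))>0$, which is Step 2.
\end{itemize}

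\emph{Step 2: compute $u$ on indecomposables.} The claim is that $u(\res_S M(\bfd'))=m(\bfd'*\bfd)=\min_{i\in S}d'_i$ for $\bfd'\ge\bfd$, and $u(\res_S M(\bfd'))=0$ otherwise; the second case holds because some vertex of $S$ carries zero. For type $A$ both roots are thin, so $\res_S M(\bfd')=\mathbf 1$ and the claim is immediate. For type $D$ I would take explicit matrix models of the indecomposables (the non-thin ones have entries $2$ on the branch path) and check case by case how many copies of the thin indecomposable with full support $S$ split off after restriction. The answer should be $1$ unless $S$ lies inside the part where $\bfd'$ has entries $2$, in which case it is $2$. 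Granting Steps 1--2, we get $\varphi_{\bfd}(N)=u(\res_S N)$ for every $N$.

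\emph{Step 3: multiplicativity.} The main obstacle is to prove $u(X\otimes Y)=u(X)u(Y)$ for representations $X,Y$ of a tree quiver $Q_S$ of type $A$ or $D$. Once this holds, the statement follows immediately, since
\[
\varphi_{\bfd}(M^{\otimes n})=u\big((\res_S M)^{\otimes n}\big)=u(\res_S M)^n=\varphi_{\bfd}(M)^n .
\]
By additivity and $\mathbf 1\otimes\mathbf 1=\mathbf 1$, it suffices to show that $X\otimes Y$ has no summand $\mathbf 1$ whenever $X,Y$ are indecomposable and $X\not\cong\mathbf 1$.
\begin{itemize}
\item If $X$ vanishes at some vertex, so does $X\otimes Y$, and we are done.
\item If $X$ has full support and is thin, then $X\cong\mathbf 1$ because $Q_S$ is a tree.
\item The remaining case is $X$ non-thin with full support (type $D$ only). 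First I would characterise $u(X)$ intrinsically as the rank of the canonical map $\lim X\to\operatorname{colim}X$ (i.e.\ of $\Hom(\mathbf 1,X)\times\Hom(X,\mathbf 1)\to\End(\mathbf 1)=\bfk$), and try to show this rank is multiplicative using the fact that the map factors through any single $X_v$. If that turns out not to be clean, I would fall back on the explicit tensor-product decompositions of indecomposables of type $D$ (Herschend's computations). From those one can check directly that tensoring the non-thin full-support indecomposable with any indecomposable produces no copy of $\mathbf 1$.
\end{itemize}
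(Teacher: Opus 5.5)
\textbf{Steps 1 and 2 are false in type $D$.} Your overall strategy is sound, and it is genuinely different from the paper's. But the claims $S(\bfd)=\{\bfd'\mid\bfd'\ge\bfd\}$ and $u(\res_S M(\bfd'))=m(\bfd'*\bfd)$ for all $\bfd'\ge\bfd$ both fail, and the step ``granting Steps 1--2'' rests on them.

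Here is a counterexample. Take $Q$ of type $D_4$ with $\alpha\colon c_1\to a$ and $\beta\colon c_1\to b$. Let $\bfd=\mathbf 1_{D^1}=\mathbf 1_a+\mathbf 1_b+\mathbf 1_{c_1}$ and $\bfd'=\bfx_{1,2}\ge\bfd$, so $m(\bfd'*\bfd)=1$.
\begin{itemize}
\item With the paper's model, $\res_S M(\bfx_{1,2})$ is $\bfk\xleftarrow{(0\ 1)}\bfk^2\xrightarrow{(1\ 0)}\bfk$.
\item The map $\bfk^2\to\bfk_a\oplus\bfk_b$ is an isomorphism, so this splits as $\mathbf 1_{\{a,c_1\}}\oplus\mathbf 1_{\{b,c_1\}}$. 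Hence $u(\res_S M(\bfd'))=0$, not $1$.
\item The only candidates are $\bfd''\in\{\mathbf 1_{D^1},\mathbf 1_{D^2},\bfx_{1,2}\}$, and none of them works. So $\bfx_{1,2}\notin S(\bfd)$ although $\bfx_{1,2}\ge\bfd$.
\end{itemize}
This agrees with the paper's own use, in \eqref{eq:SumOfn-FoldTwinCoefficients-FixI-P0-SameOrientation-1}, of $S(\mathbf 1_{D^{i_0+1}})$ containing no twin roots when $\sigma(\alpha)=\sigma(\beta)$.

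In general, $u(\res_S M(\bfx_{i,j}))=0$ whenever $S$ reaches two of the three neighbours $a,b,c_{i+1}$ of the block $\{c_1,\dots,c_i\}$ through arrows pointing the same way relative to the block. In that case you have two rank-one maps out of $\bfk^2$ with distinct kernels, or two into $\bfk^2$ with distinct images. So the case check you planned in Step 2 would have refuted its own prediction.

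\textbf{The repair.} The identity $\varphi_{\bfd}(N)=u(\res_S N)$ is still true, but it needs a different justification, and it must come after Step 3.
\begin{itemize}
\item Replace Step 2 by: for $\bfd'\ge\bfd$, $u(\res_S M(\bfd'))\in\{0,m(\bfd'*\bfd)\}$. Indeed $u\le\min_{v\in S}\dim M(\bfd')_v=m(\bfd'*\bfd)$. When this minimum is $2$, $S$ lies inside the block, where the model is $\bfk^2$ with identity maps, so the restriction is $2\cdot\mathbf 1$.
\item Replace Step 1 by: $\bfd'\in S(\bfd)$ if and only if $u(\res_S M(\bfd'))>0$. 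The ``if'' direction is your choice $\bfd''=\bfd$. For ``only if'', a summand $M(\bfd)$ of $M(\bfd')\otimes M(\bfd'')$ restricts to a summand $\mathbf 1$ of $\res_S M(\bfd')\otimes\res_S M(\bfd'')$. Multiplicativity of $u$ then forces $u(\res_S M(\bfd'))>0$.
\end{itemize}

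For the twin case of Step 3, the $\lim\to\operatorname{colim}$ idea is not yet an argument. Falling back on Herschend's tables would just repeat the paper's case check of \eqref{eq:mult}. The mechanism of the counterexample gives a short proof instead:
\begin{itemize}
\item Let $X$ be a full-support twin on $Q_S$. By pigeonhole, two of the three arrows joining its block to $a,b,c_{i+1}$ point the same way relative to the block.
\item Let $T'$ be the path through those two neighbours. Then $\res_{T'}X$ splits into two thin summands, each vanishing at an endpoint of $T'$.
\item Hence $\res_{T'}(X\otimes Y)$ has no summand $\mathbf 1_{T'}$ for any $Y$. Since restriction is additive and sends $\mathbf 1$ to $\mathbf 1_{T'}$, $X\otimes Y$ has no summand $\mathbf 1$.
\end{itemize}

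With these repairs, your route is a more conceptual alternative to the paper's induction on $n$. The identity \eqref{eq:mult} becomes the statement that $u\circ\res_S$ is multiplicative, and Lemma~\ref{lem:theLittleGenerateTheLarge} is no longer needed.
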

	
	The proof of Proposition~\ref{prop:SumOfCoefficientsOfGeneratorsOfThinRepOfTypeAandD} is given explicitly for type $D$ in Appendix~\ref{sec:proofD}; the type $A$ case follows as a corollary. The proof for type $D$ relies on the tensor product decomposition formulas from \cite[Proposition 5--7]{H09}, which are used directly without repetition.
	
	We also need the following statement about quivers.
	\begin{lemma}\label{lem:NumberOfVerticesMinusNumberOfArrows}
		Let $Q$ be a finite connected quiver whose underlying graph is a tree. Then $\sharp Q_0 - \sharp Q_1 = 1$.
	\end{lemma}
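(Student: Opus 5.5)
We argue by induction on the number of vertices $\sharp Q_0$, using the standard fact that a finite tree with at least two vertices has a leaf, i.e. a vertex of degree one. Throughout we work with the underlying graph of $Q$: orientation plays no role. Since the underlying graph is a tree, there are no loops and no multiple edges, so each arrow corresponds to exactly one edge.

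\emph{Base case.} If $\sharp Q_0=1$, then $Q$ has no arrows, since a loop would create a cycle and a tree has none. Hence $\sharp Q_0-\sharp Q_1=1-0=1$.

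\emph{Inductive step.} Suppose $\sharp Q_0\geq 2$. First we produce a leaf. Take a path in the underlying graph of maximal length; it exists because $Q$ is finite and, being a tree, has no cycles. Its endpoint $v$ can have no neighbour other than its predecessor on the path. Indeed, a neighbour off the path would extend it, contradicting maximality. A neighbour already on the path would close a cycle, contradicting the tree property. Since $Q$ is connected with at least two vertices, $v$ has at least one neighbour, so $v$ has degree exactly one.

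Now remove $v$ together with its unique incident arrow, obtaining the full subquiver $Q'$ on $Q_0\setminus\{v\}$. Then $Q'$ is still connected: any path in $Q$ between two vertices of $Q'$ passing through $v$ would enter and leave $v$ along the same edge, so it can be shortened to avoid $v$. Also $Q'$ is acyclic as a subgraph of a tree. Hence $Q'$ is again a finite connected quiver whose underlying graph is a tree. It has one vertex and one arrow fewer than $Q$, so by induction
\[
\sharp Q_0-\sharp Q_1=(\sharp Q'_0+1)-(\sharp Q'_1+1)=\sharp Q'_0-\sharp Q'_1=1.
\]

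There is no real obstacle here; the only point needing care is checking that deleting the leaf preserves connectedness, which is the path-shortening argument above. Alternatively, one can simply cite this as the classical identity $|V|-|E|=1$ for trees.
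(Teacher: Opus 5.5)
Your proof is correct and is essentially the paper's argument: induction on $\sharp Q_0$, deleting a vertex incident to exactly one edge and checking that the remaining full subquiver is still a connected tree. The one difference is that you also prove a leaf exists, via a maximal path, whereas the paper simply asserts this.
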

	
	\begin{proof}
		We proceed by induction on $\sharp Q_0$. 
		The case $\sharp Q_0 = 1$ is trivial. 
		Assume the statement holds for all quivers with $n$ vertices ($n\ge1$), and let $Q$ be a quiver with $n+1$ vertices whose underlying graph is a tree. 
		Since the underlying graph is a finite tree, there exists a vertex $i_0 \in Q_0$ incident to exactly one edge. Let $Q'$ be the full subquiver of $Q$ with vertex set $Q_0 \setminus \{i_0\}$. 
		Then $Q'$ is connected: if $Q'$ had more than one connected component, the unique edge incident to $i_0$ could connect $i_0$ to at most one of them, leaving the other components disconnected from the rest of $Q$, contradicting the connectedness of $Q$. 
		Moreover, $Q'$ is acyclic because its underlying graph is a subtree of a tree. 
		By the induction hypothesis, $\sharp Q'_0 - \sharp Q'_1 = 1$. Since $Q$ is obtained from $Q'$ by adding one vertex and one edge, we have $\sharp Q_0 = \sharp Q'_0 + 1$ and $\sharp Q_1 = \sharp Q'_1 + 1$. Therefore $\sharp Q_0 - \sharp Q_1 = (\sharp Q'_0 + 1) - (\sharp Q'_1 + 1) = 1$.
	\end{proof}
	
	In particular, Lemma~\ref{lem:NumberOfVerticesMinusNumberOfArrows} holds for quivers of Dynkin types.
	
	\subsection{Embedding of positive roots}
	Let $Q'$ be a subquiver of $Q$. 
	Write $\sharp Q'_0 = l'$, then $l' \le l$.
	Define the embedding functor $H = H^Q_{Q'} : \rep_\bfk ( Q' )  \to \rep_\bfk ( Q ) $ by
	\[
	(H(M))_i = \begin{cases}
		M_i, & i \in Q'_0,\\
		0, & \text{otherwise},
	\end{cases}
	\qquad
	(H(M))_\alpha = \begin{cases}
		M_\alpha, & \alpha \in Q'_1,\\
		0, & \text{otherwise},
	\end{cases}
	\]
	for each vertex $i\in Q_0$ and arrow $\alpha\in Q_1$; and for a morphism $f = (f_i)$ in $\rep_\bfk ( Q' ) $,
	\[
	(H(f))_i = \begin{cases}
		f_i, & i \in Q'_0,\\
		0, & \text{otherwise}.
	\end{cases}
	\]
	
	The inclusion $Q'_0 \subseteq Q_0$ also induces an embedding of the corresponding lattices:
	\[
	H = H^Q_{Q'} : \N^{l'} \to \N^l,\qquad \bfd = (d_i) \mapsto H(\bfd),
	\]
	where $H(\bfd)$ is given by
	\[
	H(\bfd)_i = \begin{cases}
		d_i, & i \in Q'_0,\\
		0, & \text{otherwise}.
	\end{cases}
	\]
	
	\begin{proposition}\label{prop:PropertiesOfTheImbeddingFunctor}
		Let $Q$, $Q'$, $H$ be as above. Then
		\begin{enumerate}
			\item $H$ is fully faithful, additive, and monoidal;
			\item $\underline{\dim}(H(M)) = H(\underline{\dim}(M))$. This justifies the notation.
		\end{enumerate}
	\end{proposition}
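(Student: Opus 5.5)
The plan is to verify each claim directly from the definition of $H$, working vertex by vertex and arrow by arrow. For (2) there is nothing to prove beyond unwinding definitions: $(H(M))_i=M_i$ for $i\in Q'_0$ and $0$ otherwise, so $\dim (H(M))_i$ equals $H(\underline{\dim}M)_i$ for every $i\in Q_0$.

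For (1), I first check that $H$ is a well-defined functor. We need $H(M)$ to be a representation of $Q$, and each arrow $\alpha\in Q_1$ must receive a map $(H(M))_{s(\alpha)}\to (H(M))_{t(\alpha)}$. For this I will take $Q'$ to be a full subquiver, as in the paper's usage, or at least assume that every arrow of $Q'$ has its endpoints in $Q'_0$. Arrows in $Q_1\setminus Q'_1$ are sent to $0$, which is always legitimate. Next, $H(f)$ must be a morphism of $Q$-representations. For $\alpha\in Q'_1$ the commutativity relation is the one already satisfied by $f$. For $\alpha\notin Q'_1$, both sides of $H(f)_{t(\alpha)}H(M)_\alpha=H(N)_\alpha H(f)_{s(\alpha)}$ vanish. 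Functoriality and additivity are then clear, since $H$ acts componentwise and direct sums are taken vertexwise.

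For full faithfulness, $H$ is injective on morphisms because $f$ is recovered from the components $H(f)_i$ with $i\in Q'_0$. For fullness, take $g:H(M)\to H(N)$. For $i\notin Q'_0$ the component $g_i$ is a map $0\to 0$. The restriction $(g_i)_{i\in Q'_0}$ satisfies the commutativity relations for all $\alpha\in Q'_1$, so it is a morphism $f$ in $\rep_\bfk(Q')$ with $H(f)=g$.

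For monoidality with respect to the pointwise tensor product, I will take $J_{M,N}:H(M)\otimes H(N)\to H(M\otimes N)$ to be the identity at vertices $i\in Q'_0$, where both sides equal $M_i\otimes N_i$. At the remaining vertices both sides are $0$. On arrows, $\alpha\in Q'_1$ gives $M_\alpha\otimes N_\alpha$ on both sides. For $\alpha\notin Q'_1$, the left side is $0\otimes 0=0$ and the right side is $0$. So $J$ is an isomorphism of representations, natural in $M$ and $N$.

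Since the associativity constraints on both sides are the canonical vector-space isomorphisms and $J$ is the identity, the axiom \eqref{eq:TheMonoidalStructureAxiom} reduces to an equality of canonical maps on $M_i\otimes N_i\otimes L_i$, which holds.

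The main obstacle is the unit condition: $H(\mathbf 1_{Q'})$ has dimension vector $\mathbf 1_{Q'_0}$, which differs from $\mathbf 1_Q$ whenever $Q'\neq Q$. I therefore plan to interpret "monoidal" as a non-unital (semigroupal) monoidal functor, compatible with tensor products and associators. Alternatively, I will note that $H$ is strictly monoidal onto the image category, whose unit is $H(\mathbf 1_{Q'})=M(\mathbf 1_{Q'_0})$. This is the property actually needed later, namely that $H$ preserves tensor decompositions and indecomposability.
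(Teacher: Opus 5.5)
Your proposal is correct and follows the same approach as the paper, whose proof consists only of the remark that both statements follow directly from the definitions; you have written out the vertex-by-vertex and arrow-by-arrow checks that remark leaves implicit. Your caveat about the unit is also accurate: the paper's definition of a monoidal functor requires $F(\mathbf 1)\cong\mathbf 1'$, so $H$ is monoidal only in the non-unital sense when $Q'\neq Q$, and that weaker compatibility is all that Corollary~\ref{cor:EmbeddingOfDecomposition} uses.
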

	\begin{proof}
		Both statements follow directly from the definitions.
	\end{proof}
	
	\begin{corollary}\label{cor:EmbeddingOfPositiveRoots}
		Suppose $Q'$ is a Dynkin subquiver of the Dynkin quiver $Q$. Let $\bfd$ be a positive root of $Q'$. Then
		\begin{enumerate}
			\item $H(\bfd)$ is a positive root of $Q$.
			\item $H(M^{Q'}(\bfd)) \cong M^{Q}(H(\bfd))$.
		\end{enumerate}
	\end{corollary}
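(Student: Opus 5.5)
Part (2) is the substantive claim; part (1) follows from it by taking dimension vectors.

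For (2), the plan is to use the functor $H=H^Q_{Q'}$ from Proposition~\ref{prop:PropertiesOfTheImbeddingFunctor}. By Gabriel's theorem, $M^{Q'}(\bfd)$ is indecomposable with dimension vector $\bfd$. Since $H$ is fully faithful and additive, it gives an isomorphism of endomorphism algebras
\[
\End_{Q}(H(M^{Q'}(\bfd)))\cong \End_{Q'}(M^{Q'}(\bfd)).
\]
The right-hand side is local because $M^{Q'}(\bfd)$ is indecomposable and finite-dimensional. Hence $H(M^{Q'}(\bfd))$ has local endomorphism ring and is indecomposable. (If one prefers, this also follows directly: a nontrivial idempotent of $H(M^{Q'}(\bfd))$ vanishes outside $Q'_0$, so it comes from an idempotent of $M^{Q'}(\bfd)$ by fullness.)

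By Proposition~\ref{prop:PropertiesOfTheImbeddingFunctor}(2), the dimension vector of $H(M^{Q'}(\bfd))$ is $H(\bfd)$. Since $Q$ is Dynkin, Gabriel's theorem says the dimension vector of an indecomposable representation of $Q$ is a positive root, so $H(\bfd)\in\Phi^+(Q)$; this proves (1). Gabriel's theorem also says an indecomposable representation is determined up to isomorphism by its dimension vector. Therefore $H(M^{Q'}(\bfd))\cong M^Q(H(\bfd))$, which proves (2).

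The only step needing care is the indecomposability of $H(M^{Q'}(\bfd))$. It follows from full faithfulness, once one checks that a homomorphism between representations supported on $Q'_0$ is zero at every vertex outside $Q'_0$ and satisfies the commutativity conditions only for arrows of $Q'$. For arrows of $Q$ not in $Q'$ these conditions hold trivially, because the relevant maps are $0$. I would spell this out, assuming $Q'$ is a full subquiver as in the identifications used earlier.
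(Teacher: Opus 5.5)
Your proposal is correct and follows the paper's proof essentially verbatim: full faithfulness of $H$ transfers locality of the endomorphism algebra, Proposition~\ref{prop:PropertiesOfTheImbeddingFunctor}(2) identifies the dimension vector as $H(\bfd)$, and Gabriel's theorem gives both $H(\bfd)\in\Phi^+(Q)$ and uniqueness of the indecomposable. Two minor remarks: your opening sentence reverses the logic, since $M^{Q}(H(\bfd))$ is only defined once (1) is known, although your actual argument correctly proves (1) first; and $H$ is fully faithful for any subquiver $Q'$, so the fullness assumption is not needed.
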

	\begin{proof}
		Since $H$ is fully faithful, it induces an isomorphism of algebras
		\[
		\End_{Q}\bigl(H(M^{Q'}(\bfd))\bigr) \cong \End_{Q'}\bigl(M^{Q'}(\bfd)\bigr).
		\]
		Because $M^{Q'}(\bfd)$ is indecomposable, its endomorphism algebra is local; so is that of $H(M^{Q'}(\bfd))$, hence $H(M^{Q'}(\bfd))$ is indecomposable. 
		By Proposition~\ref{prop:PropertiesOfTheImbeddingFunctor}, the dimension vector of $H(M^{Q'}(\bfd))$ equals $H(\bfd)$. 
		Now Gabriel's theorem implies that $H(\bfd)$ is a positive root of $Q$, which proves (1). Moreover, for a Dynkin quiver, indecomposable representations are uniquely determined (up to isomorphism) by their dimension vectors. Consequently, $H(M^{Q'}(\bfd)) \cong M^{Q}(H(\bfd))$, establishing (2).
	\end{proof}
	
	\begin{corollary}\label{cor:EmbeddingOfDecomposition}
		Suppose $Q'$ is a Dynkin subquiver of the Dynkin quiver $Q$.
		\begin{enumerate}
			\item Let $M = \bigoplus_{\bfd\in \Phi^+(Q')} a_{\bfd} M^{Q'}(\bfd)$ be a representation of $Q'$. Then 
			\[H(M) \cong \bigoplus_{\bfd\in \Phi^+(Q')} a_{\bfd} M^{Q}(H(\bfd)).\]
			\item Let $M,N$ be representations of $Q'$ such that $M \otimes N \cong \bigoplus_{\bfd\in \Phi^+(Q')} a_{\bfd} M^{Q'}(\bfd)$. Then 
			\[H(M) \otimes H(N) \cong \bigoplus_{\bfd\in \Phi^+(Q')} a_{\bfd} M^{Q}(H(\bfd)).\]
		\end{enumerate}
	\end{corollary}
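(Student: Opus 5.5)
Both parts follow from Proposition~\ref{prop:PropertiesOfTheImbeddingFunctor} and Corollary~\ref{cor:EmbeddingOfPositiveRoots}. The idea is to transport decompositions in $\rep_\bfk(Q')$ along $H$ and then identify each summand with the fixed representative $M^Q(H(\bfd))$.

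For (1), we start from the given isomorphism $M \cong \bigoplus_{\bfd} a_\bfd M^{Q'}(\bfd)$ in $\rep_\bfk(Q')$. Since $H$ is a functor, it sends isomorphisms to isomorphisms. Since it is additive, it commutes with finite direct sums up to canonical isomorphism; this is immediate from the vertexwise definition of $H$. Hence
\[
H(M)\cong \bigoplus_{\bfd\in\Phi^+(Q')} a_\bfd H\bigl(M^{Q'}(\bfd)\bigr).
\]
Corollary~\ref{cor:EmbeddingOfPositiveRoots}(2) gives $H(M^{Q'}(\bfd))\cong M^Q(H(\bfd))$ for each $\bfd$. Substituting termwise gives the claim, using that direct sums of isomorphisms are isomorphisms.

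For (2), we use that $H$ is monoidal for the pointwise tensor products on $\rep_\bfk(Q')$ and $\rep_\bfk(Q)$. Concretely, there is a natural isomorphism $H(M)\otimes H(N)\cong H(M\otimes N)$. At a vertex $i\in Q'_0$ both sides equal $M_i\otimes N_i$. At a vertex $i\notin Q'_0$ both sides are $0$, because $0\otimes 0=0$. The arrow maps agree in the same way: for $\alpha\in Q_1\setminus Q'_1$ both sides are the zero map, since $0\otimes 0 = 0$ and the spaces may be zero. We therefore obtain
\[
H(M)\otimes H(N)\cong H(M\otimes N)\cong H\Bigl(\bigoplus_{\bfd} a_\bfd M^{Q'}(\bfd)\Bigr),
\]
and part (1) finishes the argument.

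The only step needing care is the monoidality check at arrows of $Q$ that lie between vertices of $Q'$ but are not in $Q_1'$. If $Q'$ were not a full subquiver, then for such an arrow $\alpha$ we would have $H(M)_\alpha=0$, and also $(H(M)\otimes H(N))_\alpha = 0\otimes 0 = 0$, so the two sides still agree. Hence no fullness hypothesis is needed, and the proof reduces to Proposition~\ref{prop:PropertiesOfTheImbeddingFunctor}(1) together with Corollary~\ref{cor:EmbeddingOfPositiveRoots}.
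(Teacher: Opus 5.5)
Your proposal is correct and is the same argument as the paper's: part (1) comes from additivity of $H$ together with Corollary~\ref{cor:EmbeddingOfPositiveRoots}(2), and part (2) comes from the monoidality of $H$ in Proposition~\ref{prop:PropertiesOfTheImbeddingFunctor}(1), which reduces it to part (1). The only difference is that you spell out the vertexwise and arrowwise check of $H(M)\otimes H(N)\cong H(M\otimes N)$, which the paper leaves as following directly from the definitions.
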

	\begin{proof}
		(1) follows directly from Proposition~\ref{prop:PropertiesOfTheImbeddingFunctor}~(1) and Corollary~\ref{cor:EmbeddingOfPositiveRoots}~(2). 
		(2) follows from (1) together with Proposition~\ref{prop:PropertiesOfTheImbeddingFunctor}~(1).
	\end{proof}

	\subsection{Dynkin quivers of type \texorpdfstring{$D$}{D}}
	In this subsection, we assume that the quiver $Q$ is of type $D_l$ $(l\geq 4)$.
	We assume that the underlying graph of $Q$ is
	\begin{center}
		\tikz{
			\node (1) at (0,0) {$c_1$};
			\node[above left =of 1] (a) {$a$};
			\node[below left =of 1] (b) {$b$};
			\node[right=of 1] (2) {$c_2$};
			\node[right=of 2] (3) {$\cdots$};
			\node[right=of 3] (4) {$c_{l-2}$};
			
			\path 
			(1) edge node[above] {$\alpha$}  (a) edge node[above] {$\beta$}  (b)
			(3) edge node[above] {$\gamma_{l-3}$} (4);
			\foreach \x [remember=\x as \lastx (initially 1)] in {2,3} {\path (\lastx) edge node[above] {$\gamma_\lastx$} (\x);};
		}
	\end{center}
	
	The set $\Phi^+$ of positive roots of $Q$ is partitioned into thin roots $\Phi^+_{\mathrm{th}}$ and twin roots $\Phi^+_{\mathrm{tw}}$. In \cite[Section~2.4]{R16}, twin roots are further classified as \emph{$r$-twin roots} ($1\le r\le l-3$) of the form
	\[
	\mathbf{1}_a+\mathbf{1}_b+\sum_{k=1}^{i}\mathbf{1}_k+\sum_{k=1}^{r}\mathbf{1}_k,\qquad 1\leq r<i\le l-2.
	\]
	For brevity we simply call them \emph{twin roots}. Their general form is
	\begin{equation}\label{eq:DefinitionOfTwinRoots}
		\bfx_{i,j}=\mathbf{1}_a+\mathbf{1}_b+\sum_{k=1}^{i}\mathbf{1}_k+\sum_{k=1}^{j}\mathbf{1}_k,
	\end{equation}
	where $i,j\in\N$ and $1\le i<j\le l-2$.
	The same symbol $\bfx_{i,j}$ is used for quivers of different types $D_l$; its dimension vector acquires trailing zeros as $l$ grows. This apparent ambiguity does not affect our computations of the tensor product, thanks to Corollaries~\ref{cor:EmbeddingOfPositiveRoots} and \ref{cor:EmbeddingOfDecomposition}. Indeed, If we denote by $\bfx^{Q}_{i,j}$ the twin root in $Q$, then for the full subquiver $Q'$ of type $D_{l'}$ $(l \ge l')$ we have 
	\[\bfx^{Q}_{i,j} = H(\bfx^{Q'}_{i,j}).\]
	Since the embedding functor $H$ preserves the decompositions of the tensor product (see Corollary~\ref{cor:EmbeddingOfDecomposition}), the simplified notation is justified.
	
	Note that the bijection \eqref{eq:ThinVector<->SubsetOfVertexSet} does not apply to twin roots. In fact, the vector 
	\[
	\mathbf{1}_{\supp(\bfx_{i,j})}=\mathbf{1}_a+\mathbf{1}_b+\sum\limits_{k=1}^{j}\mathbf{1}_k
	\]
	is a thin root, whereas $\bfx_{i,j}$ is not.
	
	For $\bfd', \bfd \in \Phi^+$, by definition, $m(\bfd' * \bfd) = 2$ only when one of $\bfd',\bfd$ is a twin root $\bfx_{i,j}$ and the other is a thin root whose support is contained in $\{k\in Q_0\mid 1\le k\le i\}$; otherwise the value is $1$.
	We remark that 
	\begin{equation}\label{eq:SumOfM(-*1)a}
		\sum\limits_{\bfd\geq \mathbf{1}_k}m(\bfd*\mathbf{1}_k)a_\bfd=\dim M_k
	\end{equation}
	for all vertex $k\in Q_0$.
	
	\begin{theorem}\label{thm:bForTypeD}
		Let $n$ be a positive integer, let $M$ be a representation of $Q$, and write
		\[
		M = \bigoplus_{\bfd\in\Phi^+} a_\bfd M(\bfd), \qquad 
		M^{\otimes n} = \bigoplus_{\bfd\in\Phi^+} a_\bfd^{(n)} M(\bfd).
		\]
		Then
		\begin{equation}\label{eq:bForD}
			b_n(M) = \sum_{i\in Q_0} (\dim M_i)^n
			- \sum_{\substack{\bfd\in\Phi^+ \\ \ell(\bfd)=2}} \Bigl( \sum_{\substack{\bfd'\in\Phi^+ \\ \bfd'\ge\bfd}} m(\bfd'*\bfd)\, a_{\bfd'} \Bigr)^{\!n}
			- \sum_{\bfd\in\Phi^+_{\mathrm{tw}}} a_\bfd^{(n)}.
		\end{equation}
	\end{theorem}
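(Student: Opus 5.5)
We split $b_n(M)=\sum_{\bfd\in\Phi^+}a^{(n)}_\bfd$ into its thin and twin parts. The twin part $\sum_{\bfd\in\Phi^+_{\mathrm{tw}}}a^{(n)}_\bfd$ appears explicitly in \eqref{eq:bForD}, so it is enough to prove the identity
\[
\sum_{\bfd\in\Phi^+_{\mathrm{th}}}a^{(n)}_\bfd+2\sum_{\bfd\in\Phi^+_{\mathrm{tw}}}a^{(n)}_\bfd=\sum_{i\in Q_0}(\dim M_i)^n-\sum_{\ell(\bfd)=2}\Bigl(\sum_{\bfd'\ge\bfd}m(\bfd'*\bfd)a_{\bfd'}\Bigr)^n .
\]
The plan is to derive this as a vertex-minus-edge (Euler characteristic) count on supports. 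We then evaluate the vertex sum and the edge sum separately using Proposition~\ref{prop:SumOfCoefficientsOfGeneratorsOfThinRepOfTypeAandD}.

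First, for each thin root $\bfd$ of length $1$ or $2$ we identify $S(\bfd)$ with $\{\bfd'\in\Phi^+\mid \bfd'\ge\bfd\}$. Since $M(\bfd')\otimes M(\bfd)$ has dimension vector $\bfd'*\bfd$ by Lemma~\ref{lem:DimensionVectorOfTensorProduct}, $M(\bfd)$ can occur as a summand only when $\supp\bfd\subseteq\supp\bfd'$. Conversely, when $\bfd'\ge\bfd$ we can tensor with $M(\bfd)$ itself (thin, so each summand of the product is supported on $\supp\bfd$); we also use the decomposition formulas of \cite{H09} for $M(\bfx_{i,j})\otimes M(\bfd)$, which give $m(\bfd'*\bfd)$ copies of $M(\bfd)$. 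Proposition~\ref{prop:SumOfCoefficientsOfGeneratorsOfThinRepOfTypeAandD} then yields
\[
\Bigl(\sum_{\bfd'\ge\bfd}m(\bfd'*\bfd)a_{\bfd'}\Bigr)^n=\sum_{\bfd'\ge\bfd}m(\bfd'*\bfd)a^{(n)}_{\bfd'} .
\]
For $\bfd=\mathbf{1}_k$, \eqref{eq:SumOfM(-*1)a} identifies the left-hand side with $(\dim M_k)^n$. This is consistent, since $\dim(M^{\otimes n})_k=(\dim M_k)^n$, and it already follows from Lemma~\ref{lem:DimensionVectorOfTensorProduct}.

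Next we sum over all vertices $k$ and subtract the sum over all length-2 roots, i.e.\ over the arrows of $Q$ by Lemma~\ref{lem:SubquiverCorrespondToDifferentLength}. Interchanging summations, the coefficient of $a^{(n)}_{\bfd'}$ in the result is
\[
c(\bfd')=\sum_{k\in\supp\bfd'}m(\bfd'*\mathbf{1}_k)-\sum_{\alpha\in(\supp\bfd')_1}m(\bfd'*\mathbf{1}_{\{s(\alpha),t(\alpha)\}}).
\]
If $\bfd'$ is thin, every $m$ equals $1$. The support is a tree, so Lemma~\ref{lem:NumberOfVerticesMinusNumberOfArrows} gives $c(\bfd')=1$.

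If $\bfd'=\bfx_{i,j}$ is a twin root, its support is the thin root $\mathbf{1}_a+\mathbf{1}_b+\sum_{k\le j}\mathbf{1}_k$, which has $j+2$ vertices and $j+1$ edges. The value $m=2$ occurs for the vertices $c_1,\dots,c_i$ and for the edges $c_1c_2,\dots,c_{i-1}c_i$, giving
\[
c(\bfx_{i,j})=(j+2+i)-(j+1+i-1)=2 .
\]
This proves the identity and hence \eqref{eq:bForD}.

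The main obstacle is the first step for length-2 roots $\bfd$ meeting twin roots. There we must check, from the formulas in \cite{H09}, that $S(\bfd)$ is exactly $\{\bfd'\ge\bfd\}$ and that the multiplicities match $m(\bfd'*\bfd)$, especially on the edges $c_{i}c_{i+1}$ where the twin root has value $(2,1)$. Here $m=1$, and we must verify that $M(\bfd)$ indeed appears (once) in $M(\bfx_{i,j})\otimes M(\bfd)$. We would handle this by restricting via $H$ (Corollary~\ref{cor:EmbeddingOfDecomposition}) to a small $D$-type or $A$-type subquiver and reading off the explicit decomposition.
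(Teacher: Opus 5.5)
Your proposal is correct and is essentially the paper's argument. It uses the same ingredients: Proposition~\ref{prop:SumOfCoefficientsOfGeneratorsOfThinRepOfTypeAandD} for roots of length $1$ and $2$ with $S(\bfd)=\{\bfd'\ge\bfd\}$ taken from \cite{H09}, Lemma~\ref{lem:NumberOfVerticesMinusNumberOfArrows}, and the values of $m$ on twin supports. You package them as a single vertex-minus-edge count with coefficient $1$ on thin roots and $2$ on twin roots, while the paper first rewrites $b_n(M)$ with coefficients $\ell(\bfd)-1$ and then substitutes the edge identity with coefficients $\ell(\bfd')-1$ and $\ell(\bfd')-2$. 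These are the same computation in a different order, and the \cite{H09} check you flag as the main obstacle is exactly the step the paper also cites rather than proves.
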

	
	\begin{proof}
		Every positive root of length at most $2$ is thin. By Corollary~\ref{cor:TensorProductOfIndRepOfA} and \cite[Propositions 5--7]{H09}, for such $\bfd$ we have $S(\bfd)=\{\bfd'\in\Phi^+\mid \bfd'\ge\bfd\}$.
		
		We first apply Proposition~\ref{prop:SumOfCoefficientsOfGeneratorsOfThinRepOfTypeAandD} to the simple roots. From \eqref{eq:SumOfM(-*1)a}, this gives
		\begin{align}
			b_n(M) &= \sum_{i\in Q_0} \Bigl( (\dim M_i)^n - \sum_{\substack{\bfd\in\Phi^+ \\ \bfd > \mathbf{1}_i}} m(\bfd*\mathbf{1}_i)\, a_{\bfd}^{(n)} \Bigr) 
			+ \sum_{\substack{\bfd\in\Phi^+ \\ \ell(\bfd)\ge 2}} a_{\bfd}^{(n)} \nonumber\\
			&= \sum_{i\in Q_0} (\dim M_i)^n 
			- \sum_{\substack{\bfd\in\Phi^+ \\ \ell(\bfd)\ge 2}} \Bigl( \sum_{\substack{i\in Q_0 \\ \mathbf{1}_i \le \bfd}} m(\bfd*\mathbf{1}_i) - 1 \Bigr) a_{\bfd}^{(n)}. \label{eq:TypeDFormula-step1}
		\end{align}
		For a positive root $\bfd = (d_i)$, from Lemma~\ref{lem:SubquiverCorrespondToDifferentLength}~(1) we have
		\[
		\sum_{\substack{i\in Q_0 \\ \mathbf{1}_i\le \bfd}} m(\bfd*\mathbf{1}_i)
		= \sum_{i\in \supp(\bfd)} m(\bfd*\mathbf{1}_i)
		= \sum_{i\in \supp(\bfd)} d_i = \ell(\bfd).
		\]
		Inserting this into \eqref{eq:TypeDFormula-step1} we obtain
		\begin{equation}
			b_n(M) = \sum_{i\in Q_0} (\dim M_i)^n - \sum_{\substack{\bfd\in\Phi^+ \\ \ell(\bfd)\ge 2}} (\ell(\bfd)-1)\, a_{\bfd}^{(n)}. \label{eq:step2}
		\end{equation}
		
		Now we isolate the contribution of length‑2 roots. For any $\bfd$ with $\ell(\bfd)=2$, Proposition~\ref{prop:SumOfCoefficientsOfGeneratorsOfThinRepOfTypeAandD} gives
		\[
		a_{\bfd}^{(n)} = \Bigl( \sum_{\bfd'\ge\bfd} m(\bfd'*\bfd) a_{\bfd'} \Bigr)^{\!n} - \sum_{\bfd'>\bfd} m(\bfd'*\bfd) a_{\bfd'}^{(n)}.
		\]
		Summing over all such $\bfd$ we get
		\begin{align}
			\sum_{\ell(\bfd)=2} a_{\bfd}^{(n)}
			&= \sum_{\ell(\bfd)=2} \Bigl( \sum_{\bfd'\ge\bfd} m(\bfd'*\bfd) a_{\bfd'} \Bigr)^{\!n}
			- \sum_{\bfd'\in\Phi^+} \Bigl( \sum_{\substack{\ell(\bfd)=2 \\ \bfd<\bfd'}} m(\bfd'*\bfd) \Bigr) a_{\bfd'}^{(n)}. \label{eq:step3}
		\end{align}
		The inner coefficient depends on the type of $\bfd'$. Using Lemma~\ref{lem:SubquiverCorrespondToDifferentLength}(2) and Lemma~\ref{lem:NumberOfVerticesMinusNumberOfArrows}, one obtains
		\[
		\sum_{\substack{\ell(\bfd)=2 \\ \bfd<\bfd'}} m(\bfd'*\bfd) =
		\begin{cases}
			\ell(\bfd')-1, & \bfd'\in\Phi^+_{\mathrm{th}},\\[4pt]
			\ell(\bfd')-2, & \bfd'\in\Phi^+_{\mathrm{tw}}.
		\end{cases}
		\]
		Substituting these values into \eqref{eq:step3} yields
		\begin{equation}
			\sum_{\ell(\bfd)=2} a_{\bfd}^{(n)}
			= \sum_{\ell(\bfd)=2} \Bigl( \sum_{\bfd'\ge\bfd} m(\bfd'*\bfd) a_{\bfd'} \Bigr)^{\!n}
			- \sum_{\substack{\bfd\in\Phi^+_{\mathrm{th}} \\ \ell(\bfd)>2}} (\ell(\bfd)-1) a_{\bfd}^{(n)}
			- \sum_{\bfd\in\Phi^+_{\mathrm{tw}}} (\ell(\bfd)-2) a_{\bfd}^{(n)}. \label{eq:step4}
		\end{equation}
		
		Finally, combining \eqref{eq:step2} and \eqref{eq:step4}, the terms involving thin roots of length $>2$ cancel and we obtain
		\[
		b_n(M) = \sum_{i\in Q_0} (\dim M_i)^n
		- \sum_{\ell(\bfd)=2} \Bigl( \sum_{\bfd'\ge\bfd} m(\bfd'*\bfd) a_{\bfd'} \Bigr)^{\!n}
		- \sum_{\bfd\in\Phi^+_{\mathrm{tw}}} a_{\bfd}^{(n)},
		\]
		which is exactly \eqref{eq:bForD}.  This completes the proof.
	\end{proof}
	
	By \eqref{eq:bForD}, to derive the explicit formulas $b_n(M)$ for $Q$ of type $D$, it suffices to determine the summation $\sum\limits_{\bfd\in \Phi^+_{\mathrm{tw}}}a_{\bfd}^{(n)}$, which is  precisely characterized in Proposition \ref{cor:SumOfn-FoldTwinCoefficients}.
	
	\subsection{Dynkin quivers of type \texorpdfstring{$A$}{A}}
	\label{sec:bForTypeA}
	In this subsection, let $Q$ be a quiver of type $A_l$. Then $m(\bfd'*\bfd) = 1$ for all $\bfd,\bfd'\in \Phi^+$.
	
	The following Theorem~gives the formula of $b_n(M)$ for any representation $M$ of $Q$.
	\begin{theorem}\label{thm:bForTypeA}
		Let $n$ be a positive integer, $M$ be a representation of $Q$ and 
		\[
		M=\bigoplus\limits_{\bfd\in \Phi^+}a_\bfd M(\bfd),\quad M^{\otimes n}=\bigoplus\limits_{\bfd\in \Phi^+}a_\bfd^{(n)}M(\bfd).
		\]
		Then we have 
		\begin{align}\label{eq:bForA}
			b_n(M)=\sum\limits_{i=1}^l(\dim M_i)^n-\sum\limits_{\bfd\in \Phi^+,\,\ell(\bfd)=2}\Big(\sum\limits_{\bfd'\in \Phi^+,\,\bfd'\geq \bfd}a_{\bfd'}\Big)^n.
		\end{align}
	\end{theorem}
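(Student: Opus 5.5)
The plan is to repeat the argument of Theorem~\ref{thm:bForTypeD}, using the simplifications available in type $A$. By Example~\ref{ex:PositiveRoots-TypeA} every positive root is thin, so $\Phi^+_{\mathrm{tw}}=\emptyset$. Moreover $m(\bfd'*\bfd)=1$ for all $\bfd,\bfd'\in\Phi^+$. By Corollary~\ref{cor:TensorProductOfIndRepOfA} the tensor product of two indecomposables is the single indecomposable $M(\bfd*\bfd')$. Hence for every $\bfd\in\Phi^+$ we have $S(\bfd)=\{\bfd'\in\Phi^+\mid \bfd'\ge\bfd\}$: take $\bfd''=\bfd$, and conversely $\bfd'*\bfd''=\bfd$ forces $\supp(\bfd)\subseteq\supp(\bfd')$. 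Proposition~\ref{prop:SumOfCoefficientsOfGeneratorsOfThinRepOfTypeAandD} then reads
\[
\Bigl(\sum_{\bfd'\ge\bfd}a_{\bfd'}\Bigr)^{n}=\sum_{\bfd'\ge\bfd}a^{(n)}_{\bfd'}\qquad\text{for all }\bfd\in\Phi^+ .
\]

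First, apply this to the simple roots $\bfd=\mathbf{1}_i$. Since $\sum_{\bfd'\ge\mathbf{1}_i}a_{\bfd'}=\dim M_i$, we get $a^{(n)}_{\mathbf{1}_i}=(\dim M_i)^n-\sum_{\bfd>\mathbf{1}_i}a^{(n)}_\bfd$. Writing $b_n(M)=\sum_i a^{(n)}_{\mathbf{1}_i}+\sum_{\ell(\bfd)\ge2}a^{(n)}_\bfd$ and using Lemma~\ref{lem:SubquiverCorrespondToDifferentLength}(1), each $\bfd$ lies above exactly $\ell(\bfd)$ simple roots. This gives
\[
b_n(M)=\sum_i(\dim M_i)^n-\sum_{\ell(\bfd)\ge2}(\ell(\bfd)-1)\,a^{(n)}_\bfd .
\]

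Second, apply the identity to the length-$2$ roots $\bfd$ and sum over all of them:
\[
\sum_{\ell(\bfd)=2}\Bigl(\sum_{\bfd'\ge\bfd}a_{\bfd'}\Bigr)^n=\sum_{\bfd'}\sharp\{\bfd\mid\ell(\bfd)=2,\ \bfd\le\bfd'\}\,a^{(n)}_{\bfd'} .
\]
By Lemma~\ref{lem:SubquiverCorrespondToDifferentLength}(2), the count on the right equals the number of arrows of the full subquiver $\supp(\bfd')$. That subquiver is a connected path, so Lemma~\ref{lem:NumberOfVerticesMinusNumberOfArrows} makes the count $\ell(\bfd')-1$. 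This is also correct for $\ell(\bfd')=1$, where the count is $0$. Comparing with the first display gives \eqref{eq:bForA}.

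The only substantive input is Proposition~\ref{prop:SumOfCoefficientsOfGeneratorsOfThinRepOfTypeAandD}, which is assumed. Even without it, the identity is immediate in type $A$: expand $M^{\otimes n}$ by distributivity and use Corollary~\ref{cor:TensorProductOfIndRepOfA}, noting that $\bfd_1*\cdots*\bfd_n\ge\bfd$ iff every $\bfd_k\ge\bfd$. So the main point requiring care is the identification of $S(\bfd)$ and the counting of arrows in $\supp(\bfd')$. One must note that a connected full subquiver of a type $A$ quiver is a path with $\ell(\bfd')$ vertices and $\ell(\bfd')-1$ arrows.
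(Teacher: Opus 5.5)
Your proof is correct, but it takes a different route from the paper's. The paper does no computation in type $A$. It views $Q$ as a full subquiver of a type $D$ quiver, transports $M$ along the embedding functor $H$, and reads the formula off Theorem~\ref{thm:bForTypeD} via Corollary~\ref{cor:EmbeddingOfDecomposition}. There the twin-root terms vanish because $H(M)^{\otimes n}\cong H(M^{\otimes n})$ is supported on $Q$, any extra length-$2$ root contributes $0^n=0$, and all multiplicities $m(\bfd'*\bfd)$ equal $1$.

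You instead run the two-step telescoping from the proof of Theorem~\ref{thm:bForTypeD} directly: first the simple roots, then the length-$2$ roots with the vertex-minus-arrow count. In type $A$ this is simpler, since there are no twin roots and hence no $\ell(\bfd')-2$ correction. More substantively, you prove the key identity of Proposition~\ref{prop:SumOfCoefficientsOfGeneratorsOfThinRepOfTypeAandD} from scratch, by expanding $M^{\otimes n}$ and noting that $\bfd_1*\cdots*\bfd_n\ge\bfd$ iff every $\bfd_k\ge\bfd$; in other words, $\bfd'\mapsto[\bfd'\ge\bfd]$ is multiplicative. The paper gets the type $A$ case of that proposition only as a corollary of the type $D$ case, which rests on Herschend's decomposition formulas and a case-by-case check of \eqref{eq:mult}.

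The paper's route is a one-line reduction once the type $D$ machinery is in place, and it shows the formulas are compatible with the embedding. Yours is self-contained and elementary. It also avoids the small-rank wrinkle that $A_1$ and $A_2$ cannot be placed inside a type $D$ quiver by adding a single vertex and edge.

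One thing you should say explicitly: $M(\bfd)\otimes M(\bfd')=0$ when the supports are disjoint. Your argument already handles this case implicitly.
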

	
	\begin{proof}
		We regard $Q$ as a full subquiver of a quiver of type $D$ (by adding a suitable vertex and an edge). 
		Extend the representation $M$ of $Q$ to a representation of the larger $D$-type quiver by assigning the coefficient $0$ to all positive roots that are not roots of $Q$. 
		Then the result follows by applying Corollary~\ref{cor:EmbeddingOfDecomposition} and Theorem~\ref{thm:bForTypeD}.
	\end{proof}
	
	\begin{example}
		Take a representation $M=\oplus_{\bfd}a_\bfd M(\bfd)$ of $Q$ of type $A_3$. 
		Then the $n$-th term of the growth sequence of $M$ is given by
		\[
		b_n(M)=(\dim M_1)^n+(\dim M_2)^n+(\dim M_3)^n-(a_{110}+a_{111})^n-(a_{011}+a_{111})^n.
		\]
	\end{example}

	\appendix
	
	\section{Tensor product for type \texorpdfstring{$D$}{D}}

	In the following, we always assume that $Q$ is a quiver of type $D_l$ $(l\ge 4)$. 
	
	\subsection{Indecomposable representations of type $D$}
	
	Following \cite{H09}, define $\sigma: Q_1 \to \{0,1\}$ by $\sigma(\delta)=1$ if $\delta$ points to the branching point $c_1$, and $\sigma(\delta)=0$ otherwise.
	For all integers $i,j$ such that $1\le i\le j\le l-2$, let $A^i,B^i,C^{ij},D^i$ be the subquivers of $Q$ corresponding to the graphs
	\begin{center}
		\tikz[baseline=(label.base)]{
			\node (label) at (0,0) {$A^i\!:\qquad\qquad$}; 
			\node[right=of label] {\tikz{
					\node (c1) at (0,0) {$c_1$};
					\node[above left=of c1] (a) {$a$};
					\node[right=of c1] (dots) {$\cdots$};
					\node[right=of dots] (ci) {$c_{i},$};
					\path (c1) edge node[above] {$\alpha$} (a)
					(c1) edge node[above] {$\gamma_1$} (dots)
					(dots) edge node[above] {$\gamma_{i-1}$} (ci);
			}};}
		\\[6pt]
		
		\tikz[baseline=(label.base)]{
			\node (label) at (0,0) {$B^i\!:\qquad\qquad$};
			\node[right=of label] {\tikz{\node (c1) at (0,0) {$c_1$};
					\node[below left=of c1] (b) {$b$};
					\node[right=of c1] (dots) {$\cdots$};
					\node[right=of dots] (ci) {$c_{i},$};
					\path (c1) edge node[above] {$\beta$} (b)
					(c1) edge node[above] {$\gamma_1$} (dots)
					(dots) edge node[above] {$\gamma_{i-1}$} (ci);
			}};}
		\\[6pt]
		
		\tikz[baseline=(label.base)]{
			\node (label) at (0,0) {$C^{ij}\!:\qquad\qquad\qquad\qquad$}; 
			\node[right=of label] {\tikz{\node (ci) at (0,0) {$c_i$};
					\node[right=of ci] (dots) {$\cdots$};
					\node[right=of dots] (cj) {$c_j,$};
					\path (ci) edge node[above] {$\gamma_i$} (dots)
					(dots) edge node[above] {$\gamma_{j-1}$} (cj);
			}};}
		\\[6pt]
		
		\tikz[baseline=(label.base)]{
			\node (label) at (0,0) {$D^i\!:\qquad\qquad$};
			\node[right=of label] {\tikz{\node (c1) at (0,0) {$c_1$};
					\node[above left=of c1] (a) {$a$};
					\node[below left=of c1] (b) {$b$};
					\node[right=of c1] (dots) {$\cdots$};
					\node[right=of dots] (ci) {$c_i.$};
					\path (c1) edge node[above] {$\alpha$} (a)
					(c1) edge node[above] {$\beta$} (b)
					(c1) edge node[above] {$\gamma_1$} (dots)
					(dots) edge node[above] {$\gamma_{i-1}$} (ci);
			}};}
	\end{center}
	Then $\{\mathbf{1}_{A^i}\mid 0\leq i\leq l-2\}$, $\{\mathbf{1}_{B^i}\mid 0\leq i\leq l-2\}$, $\{\mathbf{1}_{C^{ij}}\mid 1\leq i<j\leq l-2\}$, and $\{\mathbf{1}_{D^i}\mid 1\leq i\leq l-2\}$ give all the thin roots of $Q$.
	
	For a twin root $\bfx_{i,j}$ of $Q$, we choose the corresponding indecomposable representation $M(\bfx_{i,j})$ (which is denoted by $X_{ij}$ in \cite{H09}) as
	\begin{center}
		\tikz{
			\node (3) at (0,0) {$\bfk^2$};
			\node[above left =of 3] (1) {$\bfk$};
			\node[below left =of 3] (2) {$\bfk$};
			\node[right=of 3] (4) {$\bfk^2$};
			\node[right=of 4] (5) {$\bfk$};
			\node[right=of 5] (6) {$\bfk$};
			\node[right=of 6] (7) {0};
			\node[right=of 7] (8) {0};
			
			\path 
			(3) edge node[above] {$A$} (1) 
			edge node[above] {$B$} (2) 
			(4) edge node[above] {$C$} (5)
			(6) edge (7);
			\path[dashed]
			(3) edge (4)
			(5) edge (6)
			(7) edge (8);
		}
	\end{center}
	where the dashed lines denotes several copies of the same vector spaces with the identity map between them. The linear maps $C$ is given by either of the matrices $\begin{pmatrix}
		1&1
	\end{pmatrix},\begin{pmatrix}
		1\\1
	\end{pmatrix}$ depending on the orientation of the corresponding arrow. Similarly the linear map $A$ is given by either of the matrices $\begin{pmatrix}
		0&1
	\end{pmatrix},\begin{pmatrix}
		1\\0
	\end{pmatrix}$ and $B$ by $\begin{pmatrix}
		1&0
	\end{pmatrix},\begin{pmatrix}
		0\\1
	\end{pmatrix}$ depending on the orientation of the corresponding arrows.
	
	The decomposition formulas for the tensor product of indecomposable representations are given in \cite[Propositions 5--7]{H09}.

	\subsection{The summation index set \texorpdfstring{$S(\bfd)$}{S(d)} for positive roots in type $D$}
	
	For the pointwise tensor product $\otimes$ and a positive root $\bfd$ of $Q$, let $S(n,\bfd)$ denote the set of $n$-tuples $(\bfd_1,\dots,\bfd_n)\in (\Phi^+)^{\times n}$ such that $M(\bfd)$ is a direct summand of $M(\bfd_1)\otimes\cdots\otimes M(\bfd_n)$.
	
	\begin{lemma}\label{lem:theLittleGenerateTheLarge}
		Let $\bfd \in \Phi^+$. Then 
		\[
		\bigcup_{\bfd'\in S(\bfd)}S(n,\bfd')\subset (S(\bfd))^{\times n}.
		\]
	\end{lemma}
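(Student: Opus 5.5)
The plan is to unwind the definitions and use only three facts: distributivity of $\otimes$ over $\oplus$, the symmetry and associativity isomorphisms of the pointwise tensor product (Subsection~\ref{subsec:tensor1}), and the Krull--Schmidt theorem in $\rep_\bfk(Q)$. First observe that, by definition, $\bfd'\in S(\bfd)$ holds exactly when there is some $\bfd''\in\Phi^+$ such that $M(\bfd)$ is a direct summand of $M(\bfd')\otimes M(\bfd'')$. The condition ``$kM(\bfd)\oplus M'$ with $M(\bfd)$ not a summand of $M'$'' only records the multiplicity $k>0$.

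Fix $\bfd'\in S(\bfd)$ with a witness $\bfd''$, and fix a tuple $(\bfd_1,\dots,\bfd_n)\in S(n,\bfd')$. We must show $\bfd_i\in S(\bfd)$ for every $1\le i\le n$.
\begin{enumerate}
\item Write $M(\bfd_1)\otimes\cdots\otimes M(\bfd_n)\cong M(\bfd')\oplus X$.
\item Tensor with $M(\bfd'')$ and use distributivity. This shows that $M(\bfd)$ is a direct summand of $M(\bfd_1)\otimes\cdots\otimes M(\bfd_n)\otimes M(\bfd'')$.
\item Fix $i$. By commutativity and associativity, rewrite this object as $M(\bfd_i)\otimes N$, where $N$ is the tensor product of the remaining factors $M(\bfd_j)$ ($j\ne i$) together with $M(\bfd'')$.
\item Decompose $N\cong\bigoplus_{s}M(\bfe_s)$ into indecomposables with $\bfe_s\in\Phi^+$. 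Distributivity then gives $M(\bfd_i)\otimes N\cong\bigoplus_s M(\bfd_i)\otimes M(\bfe_s)$.
\end{enumerate}

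Next, decompose each $M(\bfd_i)\otimes M(\bfe_s)$ into indecomposables. By the uniqueness part of Krull--Schmidt, the indecomposable $M(\bfd)$, being a summand of the total, must occur as a summand of $M(\bfd_i)\otimes M(\bfe_s)$ for some $s$. Taking $\bfe_s$ as the witness $\bfd''$ in the definition shows $\bfd_i\in S(\bfd)$. Since $i$ was arbitrary, $(\bfd_1,\dots,\bfd_n)\in(S(\bfd))^{\times n}$.

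The only delicate point is the Krull--Schmidt step, namely that an indecomposable summand of a finite direct sum is a summand of one of the pieces. This holds because $\rep_\bfk(Q)$ is a Krull--Schmidt category (finite-dimensional modules over $\bfk Q$, with local endomorphism rings of indecomposables). The case $n=1$ needs no separate treatment: then $S(1,\bfd')=\{\bfd'\}\subseteq S(\bfd)$ directly, and the argument above also covers it with $N=M(\bfd'')$. No type-$D$ specific decomposition formulas from \cite{H09} are needed.
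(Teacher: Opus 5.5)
Your proof is correct and follows the paper's argument: you tensor the summand relation for $M(\bfd')$ inside $M(\bfd_1)\otimes\cdots\otimes M(\bfd_n)$ with the witness $M(\bfd'')$, then use commutativity and Krull--Schmidt to produce a witness for each $\bfd_i$. Your step of decomposing the complementary factor $N$ into indecomposables $M(\bfe_s)$ simply spells out what the paper compresses into ``by Krull--Schmidt theorem''.
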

	\begin{proof}
		Let $\bfd'\in S(\bfd)$ and take any $(\bfd_1,\dots,\bfd_n)\in S(n,\bfd')$, so that $M(\bfd')$ is a direct summand of $M(\bfd_1)\otimes\cdots\otimes M(\bfd_n)$. We need to show that $\bfd_i\in S(\bfd)$ for all $i$.
		
		Since $\bfd'\in S(\bfd)$, there exists $\bfd''\in \Phi^+$ such that $M(\bfd)$ is a direct summand of $M(\bfd')\otimes M(\bfd'')$. Because the pointwise tensor product is associative and distributes over direct sums, $M(\bfd)$ is a direct summand of
		\[
		\bigl( M(\bfd_1)\otimes\cdots\otimes M(\bfd_n) \bigr) \otimes M(\bfd'').
		\]
		Since $\otimes$ is commutative, and by Krull-Smithdt theorem, it follows that each $\bfd_i\in S(\bfd)$, hence $(\bfd_1,\dots,\bfd_n)\in (S(\bfd))^{\times n}$, proving the inclusion.
	\end{proof}

	\subsection{Proof of Proposition \ref{prop:SumOfCoefficientsOfGeneratorsOfThinRepOfTypeAandD} for type \texorpdfstring{$D$}{D}}
	\label{sec:proofD}

	\begin{proof}[Proof of Proposition~\ref{prop:SumOfCoefficientsOfGeneratorsOfThinRepOfTypeAandD} for type $D$]
		The proof proceeds by induction on $n$. The case $n=1$ is trivial.
		Assume that the statement holds for some $n \ge 1$.
		
		Fix $\bfd\in \Phi^+_{\mathrm{th}}$. 
		By the inductive hypothesis,
		\begin{align}
			\Bigl(\sum_{\bfd'\in S(\bfd)} m(\bfd'*\bfd)\,a_{\bfd'}\Bigr)^{\!n+1}
			&= \Bigl(\sum_{\bfd'\in S(\bfd)} m(\bfd'*\bfd)\,a_{\bfd'}\Bigr)^{\!n}
			\Bigl(\sum_{\bfd'\in S(\bfd)} m(\bfd'*\bfd)\,a_{\bfd'}\Bigr) \nonumber\\
			&= \Bigl(\sum_{\bfd'\in S(\bfd)} m(\bfd'*\bfd)\,a_{\bfd'}^{(n)}\Bigr)
			\Bigl(\sum_{\bfd'\in S(\bfd)} m(\bfd'*\bfd)\,a_{\bfd'}\Bigr) \nonumber\\
			&= \sum_{\bfd',\bfd''\in S(\bfd)} m(\bfd'*\bfd)\,m(\bfd''*\bfd)\,
			a_{\bfd'}^{(n)} a_{\bfd''}. \label{eq:step1}
		\end{align}
		
		For any $\bfd',\bfd''\in S(\bfd)$, write the tensor product decomposition
		\[
		M(\bfd')\otimes M(\bfd'') = \bigoplus_{\bfd'''\in\Phi^+} k_{\bfd'''}(\bfd',\bfd'')\, M(\bfd''').
		\]
		By Corollary~\ref{cor:TensorProductOfIndRepOfA} and \cite[Propositions 5--7]{H09}, the multiplicities satisfy
		\begin{equation}\label{eq:mult}
			m(\bfd'*\bfd)\,m(\bfd''*\bfd) = \sum_{\bfd'''\in S(\bfd)} k_{\bfd'''}(\bfd',\bfd'')\, m(\bfd'''*\bfd),
		\end{equation}
		which follows from a direct case‑by‑case verification. 
		
		Substituting \eqref{eq:mult} into \eqref{eq:step1} and interchanging the order of summation gives
		\[
		\sum_{\bfd',\bfd''\in S(\bfd)} m(\bfd'*\bfd)\,m(\bfd''*\bfd)\, a_{\bfd'}^{(n)} a_{\bfd''}
		= \sum_{\bfd'''\in S(\bfd)} m(\bfd'''*\bfd)\,
		\Bigl( \sum_{\bfd',\bfd''\in S(\bfd)} k_{\bfd'''}(\bfd',\bfd'')\, a_{\bfd'}^{(n)} a_{\bfd''} \Bigr).
		\]
		By Lemma~\ref{lem:theLittleGenerateTheLarge}, for each $\bfd'''\in S(\bfd)$, we have
		\[
		a_{\bfd'''}^{(n+1)} = \sum_{\bfd',\bfd''\in S(\bfd)} k_{\bfd'''}(\bfd',\bfd'')\, a_{\bfd'}^{(n)} a_{\bfd''}.
		\]
		
		Consequently,
		\[
		\Bigl(\sum_{\bfd'\in S(\bfd)} m(\bfd'*\bfd)\,a_{\bfd'}\Bigr)^{\!n+1}
		= \sum_{\bfd'\in S(\bfd)} m(\bfd'*\bfd)\, a_{\bfd'}^{(n+1)}.
		\]
		This completes the induction step, and the proposition follows for all $n$.
	\end{proof}
	
	\subsection{Sum of coefficients of twin representations}\label{sec:SumOfTwinRoots}
	In what follows, let $n$ be a positive integer, $M$ be a representation of $Q$, and 
	\[M=\bigoplus\limits_{\bfd\in \Phi^+}a_\bfd M(\bfd),\quad M^{\otimes n}=\bigoplus\limits_{\bfd\in \Phi^+}a_\bfd^{(n)}M(\bfd).\]
	By Theorem~\ref{thm:bForTypeD}, to obtain an explicit formula for $b_n(M)$, it suffices to determine the sum of the coefficients of twin roots
	\[\sum\limits_{1\le i<j\le l-2}a_{\bfx_{i,j}}^{(n)}\]
	in the $n$-fold tensor product of $M$.

	\begin{lemma}\label{lem:SumOfn-FoldTwinCoefficients-FixI-SameOrientation}
		If $\sigma(\alpha)=\sigma(\beta)$, let $P=\{1\leq k\leq l-3\mid \sigma(\gamma_{k})\neq \sigma(\alpha)\}$.
		Then for each (fixed) $1\leq i\leq l-3$, we have
		\begin{equation}\label{eq:SumOfn-FoldTwinCoefficients-FixI-SameOrientation}
			\sum_{j>i} a_{\bfx_{i,j}}^{(n)} = \begin{cases}
				n \biggl( \sum\limits_{j>i} a_{\bfx_{i,j}} \biggr)
				\biggl( \sum\limits_{\substack{p\in P,\; p< i \\ j>i}} a_{\bfx_{p,j}} + \sum\limits_{\substack{\bfd\in \Phi^+_{\mathrm{th}} \\ \bfd>\mathbf{1}_{D^i}}} a_\bfd \biggr)^{\!n-1}, & \text{ if } i\notin P,\\[6pt]
				\biggl( \sum\limits_{\substack{p\in P,\; p\le i \\ j>i}} a_{\bfx_{p,j}} + \sum\limits_{\substack{\bfd\in \Phi^+_{\mathrm{th}} \\ \bfd>\mathbf{1}_{D^i}}} a_\bfd \biggr)^{\!n}
				- \biggl( \sum\limits_{\substack{p\in P,\; p< i \\ j>i}} a_{\bfx_{p,j}} + \sum\limits_{\substack{\bfd\in \Phi^+_{\mathrm{th}} \\ \bfd>\mathbf{1}_{D^i}}} a_\bfd \biggr)^{\!n}, & \text{ if }i \in P.
			\end{cases}
		\end{equation}
	\end{lemma}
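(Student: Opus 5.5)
We prove the formula by isolating, for fixed $i$, the part of the representation ring that can contribute to twin summands $M(\bfx_{i,j})$ with $j>i$. Set
\[
T_i:=\{\bfx_{i,j}\mid j>i\},\qquad U_i:=\{\bfx_{p,j}\mid p\in P,\ p<i<j\}\cup\{\bfd\in\Phi^+_{\mathrm{th}}\mid \bfd>\mathbf{1}_{D^i}\},
\]
and write $x=\sum_{j>i}a_{\bfx_{i,j}}$ and $y=\sum_{\bfd\in U_i}a_\bfd$.

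First, using \cite[Propositions 5--7]{H09} together with Corollary~\ref{cor:TensorProductOfIndRepOfA}, we determine which pairs of indecomposables have a tensor product containing a summand in $T_i$. Since the twin root $\bfx_{i,j}$ has value $2$ exactly on $c_1,\dots,c_i$ and support $D^j$, any factor must have support containing $D^{i+1}$. This forces every factor to lie in $T_i\cup U_i$, or in twin roots $\bfx_{p,j}$ with $p\neq i$, together with their tensor products.

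We then compute the relevant products case by case. A thin root $\bfd>\mathbf{1}_{D^i}$ times anything in $T_i\cup U_i$ is governed by pointwise multiplication of dimension vectors and restriction of support; in particular a thin times a twin stays twin with the same first index when $\bfd\ge \mathbf{1}_{D^j}$. The key orientation-dependent fact, which we extract from \cite[Proposition 7]{H09}, concerns a product of twins $X_{pj}\otimes X_{qj'}$: when $\sigma(\alpha)=\sigma(\beta)$, the product has a single twin summand whose index is governed by $\min$ or $\max$ of $p,q$. It lands on first index $i$ only through the vertices $p\in P$, where $\gamma_p$ flips orientation relative to $\alpha,\beta$. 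We then expect two kinds of behaviour. For $i\notin P$, a product of two elements of $T_i$ produces no $T_i$ summand, or at least none counted in the formula. For $i\in P$, $T_i$ is closed under products in the appropriate sense, and $T_i\otimes U_i$ lands in $T_i$ for the relevant summands.

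With these rules we obtain a truncated multiplicative structure. For $i\in P$, the classes in $T_i\cup U_i$ together with the remaining pieces behave, with respect to reaching twins of index $\ge i$ in the same stratum, like an ideal filtration. The number of $T_i$-summands of $M^{\otimes n}$ is then
\[
(\text{count in }T_i\cup U_i)-(\text{count in }U_i)=(x+y)^n-y^n,
\]
each count being obtained by induction on $n$ exactly as in the proof of Proposition~\ref{prop:SumOfCoefficientsOfGeneratorsOfThinRepOfTypeAandD}. For $i\notin P$, a summand in $T_i$ arises only from tuples with exactly one factor in $T_i$ and all other factors in $U_i$, each such tuple contributing exactly one $T_i$-summand. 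This gives $n\,x\,y^{n-1}$. We prove both claims by induction on $n$, tensoring $M^{\otimes n}$ with $M$ as in Proposition~\ref{prop:DecompositionOfTwoKindsOfTensorProduct-n} and using Lemma~\ref{lem:theLittleGenerateTheLarge} to discard factors outside $T_i\cup U_i$.

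The main obstacle is the case analysis of the twin-by-twin products from \cite[Proposition 7]{H09}. We must verify that the multiplicities are exactly $1$ in the required places, that twins with first index $p\notin P$, $p<i$ never feed into $T_i$, and that the thin summands produced do not re-enter the count. We will check this first by tabulating $X_{pj}\otimes X_{qj'}$ for $p\le q$ under the assumption $\sigma(\alpha)=\sigma(\beta)$.
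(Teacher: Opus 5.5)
Your outline is correct and is essentially the paper's argument. Both run an induction on $n$ driven by the product rules of \cite[Propositions 5--7]{H09}, together with the auxiliary fact that the number of $U_i$-summands of $M^{\otimes n}$ is $y^n$. Your hedge ``$\min$ or $\max$'' has to be resolved as $\max$: for $p\le q$, $M(\bfx_{p,j})\otimes M(\bfx_{q,j'})$ has a twin summand only if $p\in P$ and $q<\min(j,j')$, and it is then the single summand $M(\bfx_{q,\min(j,j')})$, which is exactly what your closure claims for $T_i\cup U_i$ and $U_i$ require.

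The only difference is packaging. For $i\in P$ you obtain $(x+y)^n-y^n$ by showing that the $T_i\cup U_i$-count and the $U_i$-count are each multiplicative. The paper instead writes $x^{(n)}$ for the $T_i$-count of $M^{\otimes n}$ and uses the recursions $x^{(n+1)}=x^{(n)}(x+y)+y^n x$ for $i\in P$ and $x^{(n+1)}=x^{(n)}y+y^n x$ for $i\notin P$.
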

	
	\begin{proof}
		The proof proceeds by induction on $n$. The case $n=1$ is trivial.
		Assume that Equation \eqref{eq:SumOfn-FoldTwinCoefficients-FixI-SameOrientation} holds for a given $n$. We prove it for $n+1$.
		
		We claim that for each (fixed) \(1 \le i_0 \le l-3\), the equation
		\begin{equation}\label{eq:SumOfn-FoldTwinCoefficients-FixI-P0-SameOrientation}
			\sum_{\substack{p\in P,\; p<i_0 \\ j>i_0}} a_{\bfx_{p,j}}^{(n)} + \sum_{\substack{\bfd\in \Phi^+_{\mathrm{th}} \\ \bfd > \mathbf{1}_{D^{i_0}}}} a_\bfd^{(n)}
			= \biggl( \sum_{\substack{p\in P,\; p<i_0 \\ j>i_0}} a_{\bfx_{p,j}} + \sum_{\substack{\bfd\in \Phi^+_{\mathrm{th}} \\ \bfd > \mathbf{1}_{D^{i_0}}}} a_\bfd \biggr)^{\!n}
		\end{equation}
		holds.
		Then, by \cite[Propositions 5--7]{H09} and the inductive hypothesis, we obtain the following.
		
		For \(i \notin P\),
		\begin{align*}
			\sum_{j>i} a_{\bfx_{i,j}}^{(n+1)}
			&= \biggl( \sum_{j>i} a_{\bfx_{i,j}}^{(n)} \biggr)
			\biggl( \sum_{\substack{p\in P,\; p<i \\ j>i}} a_{\bfx_{p,j}} + \sum_{\substack{\bfd\in \Phi^+_{\mathrm{th}} \\ \bfd>\mathbf{1}_{D^i}}} a_\bfd \biggr)\\&\qquad  + \biggl( \sum_{\substack{p\in P,\; p<i \\ j>i}} a_{\bfx_{p,j}}^{(n)} + \sum_{\substack{\bfd\in \Phi^+_{\mathrm{th}} \\ \bfd>\mathbf{1}_{D^i}}} a_\bfd^{(n)} \biggr)
			\biggl( \sum_{j>i} a_{\bfx_{i,j}} \biggr) \\
			&= (n+1) \biggl( \sum_{j>i} a_{\bfx_{i,j}} \biggr)
			\biggl( \sum_{\substack{p\in P,\; p< i \\ j>i}} a_{\bfx_{p,j}} + \sum_{\substack{\bfd\in \Phi^+_{\mathrm{th}} \\ \bfd>\mathbf{1}_{D^i}}} a_\bfd \biggr)^{\!n}.
		\end{align*}
		
		For \(i \in P\),
		\begin{align*}
			\sum_{j>i} a_{\bfx_{i,j}}^{(n+1)}
			&= \biggl( \sum_{j>i} a_{\bfx_{i,j}}^{(n)} \biggr)
			\biggl( \sum_{\substack{p\in P,\; p\le i \\ j>i}} a_{\bfx_{p,j}} + \sum_{\substack{\bfd\in \Phi^+_{\mathrm{th}} \\ \bfd>\mathbf{1}_{D^i}}} a_\bfd \biggr) \\&\qquad + \biggl( \sum_{\substack{p\in P,\; p< i \\ j>i}} a_{\bfx_{p,j}}^{(n)}+ \sum_{\substack{\bfd\in \Phi^+_{\mathrm{th}} \\ \bfd>\mathbf{1}_{D^i}}} a_\bfd^{(n)} \biggr)
			\biggl( \sum_{j>p} a_{\bfx_{p,j}} \biggr) \\
			&= \biggl( \sum_{\substack{p\in P,\; p\le i \\ j>i}} a_{\bfx_{p,j}} + \sum_{\substack{\bfd\in \Phi^+_{\mathrm{th}} \\ \bfd>\mathbf{1}_{D^i}}} a_\bfd \biggr)^{\!n+1} - \biggl( \sum_{\substack{p\in P,\; p< i \\ j>i}} a_{\bfx_{p,j}} + \sum_{\substack{\bfd\in \Phi^+_{\mathrm{th}} \\ \bfd>\mathbf{1}_{D^i}}} a_\bfd \biggr)^{\!n+1}.
		\end{align*}
		
		Hence \eqref{eq:SumOfn-FoldTwinCoefficients-FixI-SameOrientation} follows by induction.
		
		To prove \eqref{eq:SumOfn-FoldTwinCoefficients-FixI-P0-SameOrientation}, again by \cite[Propositions 5--7]{H09}, we have
		\begin{align}
			\sum\limits_{\substack{p\in P,\; p<i_0 \\ j>i_0}} a_{\bfx_{p,j}}^{(n+1)}
			&= \biggl( \sum\limits_{\substack{p\in P,\; p<i_0 \\ j>i_0}} a_{\bfx_{p,j}}^{(n)} \biggr)
			\biggl( \sum\limits_{\substack{p\in P,\; p<i_0 \\ j>i_0}} a_{\bfx_{p,j}} + \sum\limits_{\substack{\bfd\in \Phi^+_{\mathrm{th}} \\ \bfd>\mathbf{1}_{D^{i_0}}}} a_\bfd \biggr)\nonumber\\&\qquad + \biggl( \sum\limits_{\substack{\bfd\in \Phi^+_{\mathrm{th}} \\ \bfd>\mathbf{1}_{D^{i_0}}}} a_\bfd^{(n)} \biggr)
			\biggl( \sum\limits_{\substack{p\in P,\; p<i_0 \\ j>i_0}} a_{\bfx_{p,j}} \biggr). \label{eq:SumOfn-FoldTwinCoefficients-FixI-P0-SameOrientation-RecursiveFormula}
		\end{align}
		
		By Proposition \ref{prop:SumOfCoefficientsOfGeneratorsOfThinRepOfTypeAandD}, we have
		\begin{equation}\label{eq:SumOfn-FoldTwinCoefficients-FixI-P0-SameOrientation-1}
			\sum\limits_{\substack{\bfd\in \Phi^+_{\mathrm{th}} \\ \bfd>\mathbf{1}_{D^{i_0}}}} a_\bfd^{(n)}
			= \sum_{\bfd\in S(\mathbf{1}_{D^{i_0+1}})} a_\bfd^{(n)}
			= \biggl( \sum\limits_{\substack{\bfd\in \Phi^+_{\mathrm{th}} \\ \bfd>\mathbf{1}_{D^{i_0}}}} a_\bfd \biggr)^{\!n}.
		\end{equation}
		
		Using \eqref{eq:SumOfn-FoldTwinCoefficients-FixI-P0-SameOrientation-1} and the inductive hypothesis in \eqref{eq:SumOfn-FoldTwinCoefficients-FixI-P0-SameOrientation-RecursiveFormula}, we obtain
		\begin{align*}
			\sum\limits_{\substack{p\in P,\; p<i_0 \\ j>i_0}} a_{\bfx_{p,j}}^{(n+1)}
			&= \Biggl( \biggl( \sum\limits_{\substack{p\in P,\; p<i_0 \\ j>i_0}} a_{\bfx_{p,j}} + \sum\limits_{\substack{\bfd\in \Phi^+_{\mathrm{th}} \\ \bfd>\mathbf{1}_{D^{i_0}}}} a_\bfd \biggr)^{\!n}
			- \biggl( \sum\limits_{\substack{\bfd\in \Phi^+_{\mathrm{th}} \\ \bfd>\mathbf{1}_{D^{i_0}}}} a_\bfd \biggr)^{\!n} \Biggr)
			\biggl( \sum\limits_{\substack{p\in P,\; p<i_0 \\ j>i_0}} a_{\bfx_{p,j}} + \sum\limits_{\substack{\bfd\in \Phi^+_{\mathrm{th}} \\ \bfd>\mathbf{1}_{D^{i_0}}}} a_\bfd \biggr) \\
			&\quad + \biggl( \sum\limits_{\substack{\bfd\in \Phi^+_{\mathrm{th}} \\ \bfd>\mathbf{1}_{D^{i_0}}}} a_\bfd \biggr)^{\!n}
			\biggl( \sum\limits_{\substack{p\in P,\; p<i_0 \\ j>i_0}} a_{\bfx_{p,j}} \biggr) \\
			&= \biggl( \sum\limits_{\substack{p\in P,\; p<i_0 \\ j>i_0}} a_{\bfx_{p,j}} + \sum\limits_{\substack{\bfd\in \Phi^+_{\mathrm{th}} \\ \bfd>\mathbf{1}_{D^{i_0}}}} a_\bfd \biggr)^{\!n+1}
			- \biggl( \sum\limits_{\substack{\bfd\in \Phi^+_{\mathrm{th}} \\ \bfd>\mathbf{1}_{D^{i_0}}}} a_\bfd \biggr)^{\!n+1},
		\end{align*}
		where
		\[
		\biggl( \sum\limits_{\substack{\bfd\in \Phi^+_{\mathrm{th}} \\ \bfd>\mathbf{1}_{D^{i_0}}}} a_\bfd \biggr)^{\!n+1}
		=\sum\limits_{\substack{\bfd\in \Phi^+_{\mathrm{th}} \\ \bfd>\mathbf{1}_{D^{i_0}}}} a_\bfd^{(n+1)}
		\]
		by  the same way in \eqref{eq:SumOfn-FoldTwinCoefficients-FixI-P0-SameOrientation-1}. 
		Hence \eqref{eq:SumOfn-FoldTwinCoefficients-FixI-P0-SameOrientation} follows by induction.
	\end{proof}
	
	\begin{lemma}\label{lem:SumOfn-FoldTwinCoefficients-FixI-DifferentOrientation}
		If $\sigma(\alpha)\neq \sigma(\beta)$, then for each (fixed) $1\leq i\leq l-3$, we have
		\begin{equation}\label{eq:SumOfn-FoldTwinCoefficients-FixI-DifferentOrientation}
			\sum_{j>i} a_{\bfx_{i,j}}^{(n)} = \biggl( \sum\limits_{\substack{i'\ge i \\ j'>i'}} a_{\bfx_{i',j'}} + \sum\limits_{\substack{\bfd\in \Phi^+_{\mathrm{th}} \\ \bfd>\mathbf{1}_{D^i}}} a_\bfd \biggr)^{\!n} - \biggl( \sum\limits_{\substack{i'>i \\ j'>i'}} a_{\bfx_{i',j'}} + \sum\limits_{\substack{\bfd\in \Phi^+_{\mathrm{th}} \\ \bfd>\mathbf{1}_{D^i}}} a_\bfd \biggr)^{\!n}.
		\end{equation}
	\end{lemma}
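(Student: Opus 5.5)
We argue by induction on $n$, mirroring the proof of Lemma~\ref{lem:SumOfn-FoldTwinCoefficients-FixI-SameOrientation}. Write
\[
U_i:=\sum_{\substack{i'\ge i\\ j'>i'}} a_{\bfx_{i',j'}}+\sum_{\substack{\bfd\in\Phi^+_{\mathrm{th}}\\ \bfd>\mathbf{1}_{D^i}}}a_\bfd,\qquad
V_i:=\sum_{\substack{i'> i\\ j'>i'}} a_{\bfx_{i',j'}}+\sum_{\substack{\bfd\in\Phi^+_{\mathrm{th}}\\ \bfd>\mathbf{1}_{D^i}}}a_\bfd ,
\]
so that $U_i-V_i=\sum_{j>i}a_{\bfx_{i,j}}$, and let $U_i^{(n)},V_i^{(n)}$ denote the same expressions with $a$ replaced by $a^{(n)}$. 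The claim is $U_i^{(n)}-V_i^{(n)}=U_i^n-V_i^n$. The cleanest route is to prove the stronger statements $U_i^{(n)}=U_i^n$ and $V_i^{(n)}=V_i^n$ for every $i$; the lemma then follows by subtraction. Since $V_i=U_{i+1}+\sum_{\bfd\in\Phi^+_{\mathrm{th}},\,\bfd\ge\mathbf{1}_{D^{i+1}},\ \bfd\not>\mathbf{1}_{D^{i+1}}}a_\bfd$, i.e.\ $V_i$ differs from $U_{i+1}$ only by the term $a_{\mathbf{1}_{D^{i+1}}}$ (with the convention that $U_{l-2}$ is the thin part beyond $D^{l-2}$, which is empty), it is natural to treat both families simultaneously.

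The key step is a multiplicativity statement: the index set $T_i$ of roots appearing in $U_i$ (namely $\bfx_{i',j'}$ with $i'\ge i$, and thin roots strictly above $\mathbf{1}_{D^i}$) should be closed under tensor products in the sense that, for $\bfd',\bfd''\in T_i$, every summand of $M(\bfd')\otimes M(\bfd'')$ lies in $T_i$ and the total number of summands (counted with multiplicity) is exactly $1$; and conversely, if $\bfd'\notin T_i$ or $\bfd''\notin T_i$, then no summand of the product lies in $T_i$. This is exactly the shape of Proposition~\ref{prop:SumOfCoefficientsOfGeneratorsOfThinRepOfTypeAandD} with all weights equal to $1$, and I would verify it from \cite[Propositions 5--7]{H09} case by case. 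The hypothesis $\sigma(\alpha)\ne\sigma(\beta)$ enters here: in this orientation, the product $M(\bfx_{i',j'})\otimes M(\bfx_{i'',j''})$ should be a single indecomposable, namely the twin root $\bfx_{\max(i',i''),\min(j',j'')}$ when $\max(i',i'')<\min(j',j'')$, and a thin root supported on $D^{\min(j',j'')}$ otherwise. This contrasts with the same-orientation case, where a thin summand of multiplicity two can split off, which explains the set $P$ in Lemma~\ref{lem:SumOfn-FoldTwinCoefficients-FixI-SameOrientation}. Similarly, for a twin times a thin root above $\mathbf{1}_{D^i}$, the product is a single twin or thin root above $\mathbf{1}_{D^i}$.

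Given this closure property, the inductive step runs as in the appendix proof of Proposition~\ref{prop:SumOfCoefficientsOfGeneratorsOfThinRepOfTypeAandD}:
\[
U_i^{n+1}=U_i^{(n)}U_i=\sum_{\bfd',\bfd''\in T_i}a^{(n)}_{\bfd'}a_{\bfd''}=\sum_{\bfd'''\in T_i}a^{(n+1)}_{\bfd'''}=U_i^{(n+1)}.
\]
The third equality uses both directions of the closure property: pairs inside $T_i$ contribute exactly one summand in $T_i$, and pairs not inside $T_i$ contribute nothing to $T_i$. This parallels Lemma~\ref{lem:theLittleGenerateTheLarge}. The same argument applied to the index set of $V_i$, namely twins with $i'>i$ and thin roots above $\mathbf{1}_{D^i}$, gives $V_i^{(n+1)}=V_i^{n+1}$. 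Subtracting the two identities completes the proof.

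The main obstacle is the case check that both index sets are closed in this exact sense. Of particular concern are products of a twin $\bfx_{i',j'}$ with a thin root of type $A^k$ or $B^k$, which lie outside $T_i$ and must produce no summand in $T_i$, and products with thin roots $\mathbf{1}_{D^k}$ for $k\le i$, which must not produce twins $\bfx_{i',j'}$ with $i'\ge i$. If the $V_i$ family fails to be closed, for instance because the product of $\bfx_{i,j}$ with some root in $V_i$'s index set lands in $V_i$'s set, then I would fall back on the recursion $\sum_j a^{(n+1)}_{\bfx_{i,j}}=(U_i^{(n)}-V_i^{(n)})U_i+V_i^{(n)}(U_i-V_i)$, read off from \cite{H09}, together with the closure of $V_i$ alone. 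This yields $U_i^{n+1}-V_i^{n+1}$ by the same telescoping algebra used in Lemma~\ref{lem:SumOfn-FoldTwinCoefficients-FixI-SameOrientation}.
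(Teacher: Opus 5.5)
Your subtraction strategy is viable, but the key step is wrong as you formulate and justify it. The closure property you state (every summand of $M(\bfd')\otimes M(\bfd'')$ lies in $T_i$, and there is exactly one) is false. Worse, the product rules you offer in its support would break the argument.

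\textbf{What is false.} By symmetry take $\alpha\colon a\to c_1$ and $\beta\colon c_1\to b$. Let $L_a=\mathrm{Im}\,A$, $K_b=\ker B$, and let $L_C=\mathrm{Im}\,C$ or $K_C=\ker C$ according to the orientation. These are pairwise distinct lines in $\bfk^2$.
\begin{itemize}
\item A product of two twins is $4$-dimensional at $c_1$, while every indecomposable is at most $2$-dimensional there. So it is never indecomposable; for instance, in $D_4$ one finds $M(\bfx_{1,2})^{\otimes 2}\cong M(\bfx_{1,2})\oplus 2M(\mathbf 1_{c_1})$.
\item For $k\le i'$, the product $M(\bfx_{i',j'})\otimes M(\mathbf 1_{D^k})$ is the restriction to $D^k$, namely $M(\mathbf 1_{D^k})\oplus M(\mathbf 1_{C^{1k}})$, not a single indecomposable.
\item More seriously, the twin in a product of two twins has first index $\min(i',i'')$, not $\max(i',i'')$.
\end{itemize}
For the last point, work in $D_5$, where $M(\bfx_{1,3})\otimes M(\bfx_{2,3})$ has dimension vector $(1,1,4,2,1)$ at $a,b,c_1,c_2,c_3$. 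It has no summand $M(\bfx_{2,3})$. Such a summand would have to contain the whole $2$-dimensional space at $c_2$, on which $\gamma_1$ acts invertibly, and also the line $L_a\otimes L_a$ at $c_1$.
\begin{itemize}
\item If $\gamma_1$ points to $c_1$, this is impossible because $\mathrm{Im}(C\otimes\mathrm{id})=L_C\otimes\bfk^2$ does not contain $L_a\otimes L_a$.
\item If $\gamma_1$ points away from $c_1$, the complement $2M(\mathbf 1_{c_1})$ would have to lie in $\ker(C\otimes\mathrm{id})\cap\ker(B\otimes B)=K_C\otimes K_b$, which is only $1$-dimensional.
\end{itemize}
In fact the summand through $a$ is a twin of first index $1$. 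Under your $\max$ rule, the converse half of closure would fail outright: $\bfx_{1,3}\notin T_2$ and $\bfx_{2,3}\in T_2$, yet their product would land in $T_2$.

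\textbf{What is true, and why.} Your displayed induction only needs a weaker statement. For $\bfd',\bfd''\in T$, \emph{exactly one} summand of $M(\bfd')\otimes M(\bfd'')$, counted with multiplicity, lies in $T$; and none does if $\bfd'\notin T$ or $\bfd''\notin T$. This does hold for $T=T_i$ and for the index set of $V_i$, but for a different reason than the one you give.
\begin{itemize}
\item All roots in these sets contain $a$ and $b$. So the product of two of them is $1$-dimensional at $a$ and has exactly one summand through $a$.
\item That summand also passes through $b$, because the image of $a$ at $c_1$ is not killed by the map to $b$.
\item This is where $\sigma(\alpha)\neq\sigma(\beta)$ enters. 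When $\sigma(\alpha)=\sigma(\beta)$ and $k\le i'$, the product $M(\bfx_{i',j'})\otimes M(\mathbf 1_{D^k})$ is $M(\mathbf 1_{A^k})\oplus M(\mathbf 1_{B^k})$, with no summand in $T_i$.
\end{itemize}
The summand through $a$ is as follows:
\begin{itemize}
\item for two twins, a twin of first index $\min(i',i'')$;
\item for $\bfx_{i',j'}$ times $\mathbf 1_{D^k}$, $M(\bfx_{i',\min(j',k)})$ if $k>i'$ and $M(\mathbf 1_{D^k})$ if $k\le i'$;
\item for $\mathbf 1_{D^k}$ times $\mathbf 1_{D^{k'}}$, $M(\mathbf 1_{D^{\min(k,k')}})$.
\end{itemize}
In addition, a product with a thin root missing $a$ or $b$ has no summand through both, and a product with $\mathbf 1_{D^k}$ is supported in $D^k$. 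From these facts both halves of the weak closure follow, for $T_i$ and for the index set of $V_i$.

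\textbf{Comparison with the paper.} With this corrected input your route works and is genuinely different from the paper's proof, which is exactly your fallback. The paper reads the recursion for $\sum_{j>i}a^{(n+1)}_{\bfx_{i,j}}$ off \cite{H09}. It gets $V_i^{(n)}=V_i^n$ by quoting Proposition~\ref{prop:SumOfCoefficientsOfGeneratorsOfThinRepOfTypeAandD} for $\mathbf 1_{D^{i+1}}$, whose set $S(\mathbf 1_{D^{i+1}})$ is the index set of $V_i$, and then telescopes. Your version must verify closure of $T_i$ by hand, since $T_i$ is not one of the sets $S(\bfd)$ with $\bfd\in\Phi^+_{\mathrm{th}}$ covered by that Proposition. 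In exchange it needs no recursion and yields the stronger identity $U_i^{(n)}=U_i^n$. (In your fallback, ``if the $V_i$ family fails to be closed'' should read $T_i$.)
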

	
	\begin{proof}
		Equation \eqref{eq:SumOfn-FoldTwinCoefficients-FixI-DifferentOrientation} is proved by induction on $n$.
		The case $n=1$ is trivial.
		Assume that \eqref{eq:SumOfn-FoldTwinCoefficients-FixI-DifferentOrientation} holds for $n$.
		By \cite[Propositions 5--7]{H09}, we have
		\begin{equation}\label{eq:SumOfn-FoldTwinCoefficients-FixI-DifferentOrientation-Recursive}
			\sum_{j>i} a_{\bfx_{i,j}}^{(n+1)} = \Biggl( \sum_{j>i} a_{\bfx_{i,j}}^{(n)} \Biggr)
			\Biggl( \sum\limits_{\substack{i'\ge i \\ j'>i'}} a_{\bfx_{i',j'}} + \sum\limits_{\substack{\bfd\in \Phi^+_{\mathrm{th}} \\ \bfd>\mathbf{1}_{D^i}}} a_\bfd \Biggr)
			+ \Biggl( \sum\limits_{\substack{i'> i \\ j'>i'}} a_{\bfx_{i',j'}}^{(n)} + \sum\limits_{\substack{\bfd\in \Phi^+_{\mathrm{th}} \\ \bfd>\mathbf{1}_{D^i}}} a_\bfd^{(n)} \Biggr)
			\Biggl( \sum_{j>i} a_{\bfx_{i,j}} \Biggr).
		\end{equation}
		
		By \cite[Propositions 5--7]{H09} and Proposition \ref{prop:SumOfCoefficientsOfGeneratorsOfThinRepOfTypeAandD}, considering $\mathbf{1}_{D^{i+1}}$, we have
		\begin{equation}\label{eq:SumOfn-FoldTwinCoefficients-FixI-DifferentOrientation-2}
			\sum\limits_{\substack{i'> i \\ j'>i'}} a_{\bfx_{i',j'}}^{(n)} + \sum\limits_{\substack{\bfd\in \Phi^+_{\mathrm{th}} \\ \bfd>\mathbf{1}_{D^i}}} a_\bfd^{(n)} =\sum\limits_{\bfd\in S(\mathbf{1}_{D^{i+1}})}a_\bfd^{(n)}= \biggl( \sum\limits_{\substack{i'> i \\ j'>i'}} a_{\bfx_{i',j'}} + \sum\limits_{\substack{\bfd\in \Phi^+_{\mathrm{th}} \\ \bfd>\mathbf{1}_{D^i}}} a_\bfd \biggr)^{\!n}.
		\end{equation}
		
		Using the inductive assumption together with \eqref{eq:SumOfn-FoldTwinCoefficients-FixI-DifferentOrientation-2}, from \eqref{eq:SumOfn-FoldTwinCoefficients-FixI-DifferentOrientation-Recursive} we obtain
		\begin{align*}
			\sum_{j>i} a_{\bfx_{i,j}}^{(n+1)}
			={}& \Biggl( \biggl( \sum\limits_{\substack{i'\ge i \\ j'>i'}} a_{\bfx_{i',j'}} + \sum\limits_{\substack{\bfd\in \Phi^+_{\mathrm{th}} \\ \bfd>\mathbf{1}_{D^i}}} a_\bfd \biggr)^{\!n}
			- \biggl( \sum\limits_{\substack{i'>i \\ j'>i'}} a_{\bfx_{i',j'}} + \sum\limits_{\substack{\bfd\in \Phi^+_{\mathrm{th}} \\ \bfd>\mathbf{1}_{D^i}}} a_\bfd \biggr)^{\!n} \Biggr)
			\Biggl( \sum\limits_{\substack{i'\ge i \\ j'>i'}} a_{\bfx_{i',j'}} + \sum\limits_{\substack{\bfd\in \Phi^+_{\mathrm{th}} \\ \bfd>\mathbf{1}_{D^i}}} a_\bfd \Biggr) \\
			&+ \biggl( \sum\limits_{\substack{i'> i \\ j'>i'}} a_{\bfx_{i',j'}} + \sum\limits_{\substack{\bfd\in \Phi^+_{\mathrm{th}} \\ \bfd>\mathbf{1}_{D^i}}} a_\bfd \biggr)^{\!n}
			\Biggl( \sum_{j>i} a_{\bfx_{i,j}} \Biggr) \\
			={}& \biggl( \sum\limits_{\substack{i'\ge i \\ j'>i'}} a_{\bfx_{i',j'}} + \sum\limits_{\substack{\bfd\in \Phi^+_{\mathrm{th}} \\ \bfd>\mathbf{1}_{D^i}}} a_\bfd \biggr)^{\!n+1}
			- \biggl( \sum\limits_{\substack{i'>i \\ j'>i'}} a_{\bfx_{i',j'}} + \sum\limits_{\substack{\bfd\in \Phi^+_{\mathrm{th}} \\ \bfd>\mathbf{1}_{D^i}}} a_\bfd \biggr)^{\!n+1}.
		\end{align*}
		This completes the induction.
	\end{proof}
	
	\begin{proposition}\label{cor:SumOfn-FoldTwinCoefficients}
		The following statements hold.
		\begin{enumerate}
			\item If $\sigma(\alpha)=\sigma(\beta)$, let $P=\{1\leq k\leq l-3\mid \sigma(\gamma_{k})\neq \sigma(\alpha)\}$. Then
			\begin{align}\label{eq:SumOfTwinRoots-SameOrientation}
				\sum\limits_{\bfd\in \Phi^+_{\mathrm{tw}}}a_\bfd^{(n)}
				={}& n\sum_{\substack{i\in P}} \Biggl( \biggl( \sum_{j>i} a_{\bfx_{i,j}} \biggr)
				\biggl( \sum_{\substack{p\in P,\; p< i \\ j>i}} a_{\bfx_{p,j}} + \sum_{\substack{\bfd\in \Phi^+_{\mathrm{th}} \\ \bfd>\mathbf{1}_{D^i}}} a_\bfd \biggr)^{\!n-1} \Biggr) \nonumber \\
				&+ \sum_{\substack{i\notin P}} \Biggl( \biggl( \sum_{\substack{p\in P,\; p\le i \\ j>i}} a_{\bfx_{p,j}} + \sum_{\substack{\bfd\in \Phi^+_{\mathrm{th}} \\ \bfd>\mathbf{1}_{D^i}}} a_\bfd \biggr)^{\!n}
				- \biggl( \sum_{\substack{p\in P,\; p< i \\ j>i}} a_{\bfx_{p,j}} + \sum_{\substack{\bfd\in \Phi^+_{\mathrm{th}} \\ \bfd>\mathbf{1}_{D^i}}} a_\bfd \biggr)^{\!n} \Biggr).
			\end{align}
			
			\item If $\sigma(\alpha)\neq \sigma(\beta)$, then
			\begin{equation}\label{eq:SumOfTwinRoots-differentOrientation}
				\sum\limits_{\bfd\in \Phi^+_{\mathrm{tw}}}a_\bfd^{(n)}
				= \sum_{i=1}^{l-3} \Biggl(
				\biggl( \sum_{\substack{i'\ge i \\ j'>i'}} a_{\bfx_{i',j'}} + \sum_{\substack{\bfd\in \Phi^+_{\mathrm{th}} \\ \bfd>\mathbf{1}_{D^i}}} a_\bfd \biggr)^{\!n}
				- \biggl( \sum_{\substack{i'>i \\ j'>i'}} a_{\bfx_{i',j'}} + \sum_{\substack{\bfd\in \Phi^+_{\mathrm{th}} \\ \bfd>\mathbf{1}_{D^i}}} a_\bfd \biggr)^{\!n}
				\Biggr).
			\end{equation}
		\end{enumerate}
	\end{proposition}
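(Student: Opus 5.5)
The plan is to split the sum over twin roots by the first index and then apply the two preceding lemmas one value of $i$ at a time. By \eqref{eq:DefinitionOfTwinRoots}, every twin root is $\bfx_{i,j}$ for a unique pair $1\le i<j\le l-2$. Since $j\le l-2$, the first index satisfies $1\le i\le l-3$. Hence
\[
\sum_{\bfd\in\Phi^+_{\mathrm{tw}}}a^{(n)}_\bfd=\sum_{i=1}^{l-3}\ \sum_{j>i}a^{(n)}_{\bfx_{i,j}} .
\]
The proposition therefore reduces to identifying, for each fixed $i$, the inner sum $\sum_{j>i}a^{(n)}_{\bfx_{i,j}}$.

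For part (2), where $\sigma(\alpha)\neq\sigma(\beta)$, I will substitute Lemma~\ref{lem:SumOfn-FoldTwinCoefficients-FixI-DifferentOrientation} for each $i$. Summing the resulting expressions over $1\le i\le l-3$ gives exactly \eqref{eq:SumOfTwinRoots-differentOrientation}. No further manipulation is needed, and in particular no telescoping is required.

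For part (1), where $\sigma(\alpha)=\sigma(\beta)$, I will split the index set $\{1,\dots,l-3\}$ into $P$ and its complement. For each $i$ I will insert the corresponding case of Lemma~\ref{lem:SumOfn-FoldTwinCoefficients-FixI-SameOrientation}. One case has the form
\[
n\Big(\sum_{j>i}a_{\bfx_{i,j}}\Big)\big(\cdots\big)^{n-1},
\]
and the other is a difference of two $n$-th powers. The sums over the two parts of $\{1,\dots,l-3\}$ then give the two lines of \eqref{eq:SumOfTwinRoots-SameOrientation}.

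The step I expect to be the main obstacle is making sure each case is attached to the correct subset, $P$ or its complement. The displayed statement attaches the linear-in-$n$ term to $i\in P$. Lemma~\ref{lem:SumOfn-FoldTwinCoefficients-FixI-SameOrientation}, as printed, attaches it to $i\notin P$. Before summing, I will therefore recheck this against the multiplication rules of \cite[Propositions 5--7]{H09}. The check is whether the arrow $\gamma_i$ points the same way as $\alpha,\beta$ relative to $c_1$.

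Concretely, I will test the recursion in the proof of the lemma for $n=2$. I will compute the multiplicity of $X_{i,j}$ in $X_{i,j}\otimes X_{i,j'}$ in two cases: when $\sigma(\gamma_i)=\sigma(\alpha)$, and when $\sigma(\gamma_i)\neq\sigma(\alpha)$. A nilpotent-type product $X\otimes X\to 0\cdot X$ produces the term linear in $n$. An idempotent-type product produces the difference of powers.

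This check will confirm which membership condition on $i$ goes with which formula. Once it is settled, I will sum the two families over their respective index sets. The result is the stated expression \eqref{eq:SumOfTwinRoots-SameOrientation}, with the case labels as they appear in the proposition.
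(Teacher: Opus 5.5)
Your reduction is the paper's own proof: split $\Phi^+_{\mathrm{tw}}$ by the first index $i\in\{1,\dots,l-3\}$ and sum the per-$i$ formulas of Lemmas~\ref{lem:SumOfn-FoldTwinCoefficients-FixI-SameOrientation} and~\ref{lem:SumOfn-FoldTwinCoefficients-FixI-DifferentOrientation}. Part (2) goes through exactly as you describe.

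The gap is in part (1). You resolve the label discrepancy in favour of the proposition and promise to end with ``the case labels as they appear in the proposition''. That target is false, so no verification can deliver it. For $i\notin P$ one has $\{p\in P: p\le i\}=\{p\in P: p<i\}$, so every bracketed difference in the second line vanishes identically. At $n=1$ the printed right-hand side therefore reduces to $\sum_{i\in P}\sum_{j>i}a_{\bfx_{i,j}}$, which omits all $i\notin P$. For a concrete failure, take $D_4$ with $\alpha,\beta,\gamma_1$ all pointing into $c_1$. Then $P=\emptyset$, and for $M=M(\bfx_{1,2})$ and $n=1$ the left side is $1$ while the printed right side is $0$.

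The $n=2$ test you propose actually settles the question in favour of the lemma.
\begin{itemize}
\item In the $D_4$ example above ($1\notin P$), $X=M(\bfx_{1,2})$ consists of the three lines $\bfk e_1,\bfk e_2,\bfk(e_1+e_2)\subset\bfk^2$. Their tensor squares are linearly independent in $\bfk^4$, so $X\otimes X\cong M(\mathbf{1}_{a}+\mathbf{1}_{c_1})\oplus M(\mathbf{1}_{b}+\mathbf{1}_{c_1})\oplus M(\mathbf{1}_{c_1}+\mathbf{1}_{c_2})\oplus M(\mathbf{1}_{c_1})$. This contains no copy of $X$: it is of nilpotent type, which produces the term linear in $n$, and it occurs at $i\notin P$.
\item Reversing $\gamma_1$ (so $1\in P$, with $C=(1\ 1)$) gives $X\otimes X\cong X\oplus 2M(\mathbf{1}_{c_1})$. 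This is of idempotent type, which produces the difference of powers, and it occurs at $i\in P$.
\end{itemize}

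So your summation proves \eqref{eq:SumOfTwinRoots-SameOrientation} with the index sets $P$ and $\{1,\dots,l-3\}\setminus P$ interchanged. That is what the lemma, and hence the paper's one-line proof, actually yields. The displayed statement has the two labels swapped, and you should state and prove the corrected version rather than the printed one.
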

	
	\begin{proof}
		The formulas \eqref{eq:SumOfTwinRoots-SameOrientation} and \eqref{eq:SumOfTwinRoots-differentOrientation} are deduced from Lemmas \ref{lem:SumOfn-FoldTwinCoefficients-FixI-SameOrientation}--\ref{lem:SumOfn-FoldTwinCoefficients-FixI-DifferentOrientation}.
	\end{proof}


\end{document}